\documentclass{amsart}
\usepackage{proof}
\usepackage{tikz}
\usepackage[mathscr]{eucal}
\usepackage{amssymb}

\newtheorem{theorem}{Theorem}[section]
\newtheorem{lemma}[theorem]{Lemma}
\newtheorem{cor}[theorem]{Corollary}
\newtheorem{prop}[theorem]{Proposition}
\newtheorem{defn}[theorem]{Definition}

\newtheorem{example}[theorem]{Example}

\newcommand{\deq}{\mathrel{\mathop:}=}
\newcommand{\alg}[1]{\mathbf{#1}}
\newcommand{\var}[1]{\mathsf{#1}}
\newcommand{\sipalg}[1]{\overline{\mathbf{#1}}}
\newcommand{\Bnalg}[1]{\overline{\mathbf{B}}_{#1}}

\newcommand{\Con}[1]{\mathbf{Con}\, #1}

\newcommand{\Cm}[1]{\mathbf{Cm}\, #1}
\newcommand{\cm}[1]{\mathrm{Cm}\, #1}
\newcommand{\At}[1]{\mathrm{At}(#1)}
\newcommand{\J}[1]{\mathcal{J}(#1)}

\newcommand{\um}[1]{\mathrm{I}_{#1}}
\newcommand{\dois}[1]{\mathrm{II}_{#1}}

\def\bigFOand{\mathop{\text{\rm\Large{\&}}}}

\newcommand\freefootnote[1]{%
  \let\thefootnote\relax%
  \footnotetext{#1}}%

\def\ra{\rightarrow}

\def\up#1{\mathchoice{\bigl[#1\bigr)}{[#1)}{}{}}
\def\dw#1{\mathchoice{\bigl(#1\bigr]}{(#1]}{}{}}

\def\pw{\mathscr{P}}
\def\glue#1#2#3#4{#1^#2\star{}^#3\kern-0.2ex#4}
\def\HtM{\widehat{M}}

\tikzstyle{every label}=[label distance=0pt]
\tikzstyle{bdot}[1.5]=[circle,fill,draw,thick,minimum size=#1mm,inner sep=0pt]
\tikzstyle{dot}[1.5]=[circle,draw,thick,minimum size=#1mm,inner sep=0pt]
\tikzstyle{every edge}=[draw=black,thick]

\title[Free p-algebras]{Free p-algebras revisited: an algebraic investigation of
  implication-free intuitionism} 

\author[Kowalski]{Tomasz Kowalski$^{2,3,4}$}
\author[Słomczyńska]{Katarzyna Słomczyńska$^{1}$}

\address{$^{1}$ Department of Mathematics,
University of National Education Commission, Kraków}
\email{irena.korwin-slomczynska@up.edu.pl}

\address{$^{2}$ Department of Logic, Jagiellonian University}
\email{tomasz.s.kowalski@uj.edu.pl}

\address{$^{3}$ Department of Physical and Mathematical Sciences, La Trobe
  University} 
\email{t.kowalski@latrobe.edu.au}

\address{$^{4}$ School of Historical and Philosophical Inquiry,
  The University of Queensland} 
\email{t.kowalski@uq.edu.au}

\begin{document}

\maketitle

\begin{abstract}
We give a new construction of free distributive p-algebras. Our construction
relies on a detailed description of completely meet-irreducible congruences, so
it is purely universal algebraic.
It yields a normal form theorem for p-algebra terms, simpler proofs of
several existing results, as well as a complete characterisation of structurally
complete varieties of p-algebras.
\end{abstract}

\section{Introduction}

Distributive p-algebras, which we we will call simply \emph{p-algebras},
are bounded distributive lattices endowed with a unary operation ${}^*$
satisfying the condition
$$
x\wedge y = 0 \iff x\leq y^*
$$
equivalent over bounded distributive lattices to the equations
\begin{enumerate}
\item $1^* = 0$,
\item $0^* = 1$,
\item $x\wedge(x\wedge y)^* = x\wedge y^*$.
\end{enumerate}
This basis is given, for example, in~\cite{Ber11} (Ch.~4, p.~108).   
Investigations into p-algebras peaked in 1970s (see, e.g.,
\cite{Lee70}, \cite{Lak71}, \cite{GL71}, \cite{GL72}, \cite{Lak73},
\cite{Pri75}, \cite{Ada76}, \cite{Wro76}),
they continued with less intensity into 1980s
(see, e.g., \cite{GLQ80}, \cite{Dzi85}). Then universal algebra developed
in a different direction, and in the second half of
1980s the research petered out. 

Logically, p-algebras correspond to the fragment of the intuitionistic
propositional calculus $\mathsf{INT}$, obtained by removing implication, but retaining
negation. Let us call it $\mathsf{INT}^-$. It is a natural fragment, and
we believe it deserves to be known better. One reason for this belief is somewhat
convoluted, but let us state it nevertheless. In the realm of intuitionistic
arithmetic, it is well known that adding certain arithmetical principles
unprovable over the intuitionistic base requires lifting the underlying propositional
logic somewhat: not necessarily up to the classical logic. But for principles
formulated using only conjunction, disjunction and negation, a dichotomy holds:
the underlying logic is either $\mathsf{INT}$, or $\mathsf{CPL}$. Interestingly,
both these results were shown by Rose, in a different but equivalent form,
in a single article~\cite{Ros53}.

Another reason is abstract algebraic logic. The logic 
$\mathsf{INT}^-$ can be identified with the deductive system given
by the obvious modification of the sequent calculus for $\mathsf{INT}$. 
Viewed from this perspective, p-algebras are \emph{the} algebraic semantics for 
$\mathsf{INT}^-$, in the sense of Rebagliato and Verd\'u~\cite{RV93}.
But the fragment in question fails several stronger characterisations:
in particular it is not algebraizable. We will not enter into details, but
one easy way to see the failure of algebraizability goes by showing that
the Isomorphism Theorem 3.58(ii) of Font~\cite{Fon16} fails. The
$\mathsf{INT}^-$-\emph{filters} on an arbitrary p-algebra $\alg{A}$
are the congruence classes of $1$, so algebraizability of
$\mathsf{INT}^-$ would amount to $1$-regularity of p-algebras, which fails.
P-algebras also fail to have other properties typically 
associated with algebras of logic, such as congruence
$n$-permutability.   

Finally, our original reason to look at p-algebras was 
a very nice chapter on them in the Bergman's 
textbook on universal algebra~\cite{Ber11}. It instilled in us a desire to revive the
subject. This article, as well as its sequel where we deal with quasivarieties,
grew out of that desire.

\section{Preliminaries}
We assume familiarity with the fundamentals of universal algebra. Part I
of~\cite{Ber11} will suffice. Our notation for the most part also follows 
that book, with trivial variations such as a change of font. All other notations
and notions will be introduced on the way as the need arises.

In this section we gather 
a few basic facts about the variety $\var{Pa}$ of p-algebras.
Subdirectly irreducible algebras in $\var{Pa}$ and the lattice of
subvarieties of $\var{Pa}$ were 
described in~\cite{Lee70}. 

\begin{theorem}[Lee]\label{thm:si-descr}
Let $\alg{A}\in\var{Pa}$ be subdirectly irreducible. Then $A = B\uplus\{1\}$
and $B$ is the universe of a Boolean algebra $\alg{B} = (B;\wedge,\vee,\neg,0,e)$,
where $e$ is the top element of $\alg{B}$. The order on $A$ extends
the natural order on $B$ by requiring $1>x$ for all $x\in B$. The lattice
operations are extended accordingly, and pseudo-complementation is defined by
$$
x^*\deq\begin{cases}
         \neg x & \text{ if } x\in B\setminus\{0\}\\
         1 & \text{ if } x = 0\\
         0 & \text{ if } x = 1
       \end{cases}     
$$
\end{theorem}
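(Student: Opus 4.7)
My approach is to show directly that in a subdirectly irreducible p-algebra $\alg{A}$, the top element $1$ has a unique lower cover $e$, that $A = [0,e]\cup\{1\}$, and that $[0,e]$ carries a Boolean algebra structure with $x\mapsto x^*$ as complementation. The workhorse is the standard description of principal congruences in distributive lattices: for $u \leq v$, the pair $(a,b)$ lies in $\mathrm{Cg}(u,v)$ iff $a\vee v = b\vee v$ and $a\wedge u = b\wedge u$. In the situations we need, these lattice relations are already closed under $^*$ (using the identity $(x\wedge d)^* = x^*$ for dense $d$, which follows from axiom~(3)), so they coincide with the corresponding p-algebra principal congruences.

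First I would show that $1$ has a unique lower cover. For existence, if $1$ had no lower cover then for any $a<1$ there would be some $x$ with $a<x<1$, giving $(a,1)\notin \mathrm{Cg}(x,1)$; combined with a similar separation for pairs with both entries below $1$, the family $\{\mathrm{Cg}(x,1) : x<1\}$ of non-identity congruences would meet to the diagonal, contradicting subdirect irreducibility. For uniqueness, if $e_1\neq e_2$ were both lower covers of $1$ then $e_1\vee e_2 = 1$, and distributivity gives $\mathrm{Cg}(e_1,1)\cap\mathrm{Cg}(e_2,1)=\Delta$: any $(a,b)$ in the meet satisfies $a\wedge e_1 = b\wedge e_1$ and $a\wedge e_2 = b\wedge e_2$, forcing $a = a\wedge(e_1\vee e_2) = b\wedge(e_1\vee e_2) = b$. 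Denote the unique lower cover by $e$. Then every $x<1$ satisfies $x\leq e$: otherwise $x\vee e = 1$ and $\mathrm{Cg}(x,1)\cap\mathrm{Cg}(e,1)=\Delta$ by the same argument. Hence $A = [0,e]\cup\{1\}$.

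Next I would show that $e$ is the only element of $[0,e]$ with $*$-value $0$. Since $e > 0$ (the case $e = 0$ gives $A = \{0,1\}$, which fits the claim with $B = \{0\}$), we have $e^* \neq 1$, hence $e^* \in [0,e]$, and then $e^* = e\wedge e^* = 0$. Conversely, for $y<e$, if $y^* = 0$ the congruences $\mathrm{Cg}(y,e)$ and $\mathrm{Cg}(e,1)$ would both be non-identity yet meet trivially: the first forces $a \vee e = b \vee e$, the second forces $a \wedge e = b \wedge e$, and in a distributive lattice these two conditions together imply $a = b$. This contradicts subdirect irreducibility.

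Finally, setting $B := [0,e]$, the Boolean structure falls out. $B$ is a bounded distributive sublattice of $\alg{A}$ with bounds $0$ and $e$. For $0\neq x\in B$: the p-algebra axioms give $x^* \neq 1$, so $x^* \in B$; $x \wedge x^* = 0$; and $x \vee x^*$ satisfies $(x \vee x^*)^* = x^* \wedge x^{**} = 0$, so $x \vee x^*$ is a dense element of $B$, and by the previous paragraph equals $e$. Thus $x^*$ is the Boolean complement of $x$ in $B$, making $B$ a Boolean algebra with top $e$. The displayed formula for pseudo-complementation on $A$ is then an immediate translation. The main obstacle is the congruence-theoretic argument establishing the existence and uniqueness of $e$; once the decomposition $A = [0,e]\cup\{1\}$ is secured, the rest is essentially bookkeeping.
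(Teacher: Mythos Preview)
Your argument is essentially correct. The paper does not prove this result but merely cites it as Lee's theorem, so there is no paper proof to compare against; your congruence-theoretic route via principal lattice congruences is a standard and clean one.

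One small imprecision is worth flagging. You justify that the relevant lattice congruences are closed under $^*$ by invoking the identity $(x\wedge d)^* = x^*$ for dense $d$. That identity is exactly what is needed for $\mathrm{Cg}(y,e)$ in the step where you assume $y^*=0$: from $a\wedge y = b\wedge y$ you get $a^* = (a\wedge y)^* = (b\wedge y)^* = b^*$ outright. But for $\mathrm{Cg}(x,1)$ with $x$ arbitrary (the congruences used in the existence and uniqueness of $e$), the dense-element identity does not apply, since $x$ need not be dense. What you actually need there is axiom~(3) in its raw form: from $a\wedge x = b\wedge x$ one obtains
\[
x\wedge a^* \;=\; x\wedge (x\wedge a)^* \;=\; x\wedge (x\wedge b)^* \;=\; x\wedge b^*,
\]
so the lattice relation $a\wedge x = b\wedge x$ is indeed a p-algebra congruence for every $x$. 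This is a cosmetic fix rather than a gap: the fact you need is true and available, just not via the intermediary you name. With that adjustment the existence argument (that $\bigcap_{x<1}\mathrm{Cg}(x,1)=\Delta$ when $1$ has no lower cover), the uniqueness argument, and the remainder of your proof go through as written.
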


Following~\cite{Ber11} we write $\sipalg{B}$ for the subdirectly
irreducible p-algebra whose Boolean part is $\alg{B}$. For finite algebras we
write $\Bnalg{n}$, where $\alg{B}_n$ is the finite Boolean algebra
with $n$ atoms. Thus, $\Bnalg{0}$ is the two-element Boolean
algebra, which we will also denote by $\alg{2}$ where convenient.
We will also occasionally write $\overline{\alg{2}^s}$ instead or
$\Bnalg{s}$, to emphasise its structure. 

\begin{theorem}[Lee]\label{thm:var-descr}
The lattice of subvarieties of $\var{Pa}$ is a chain of order type $\omega+1$,
consisting of varieties
$$
\var{Pa}_{-1}\subsetneq \var{Pa}_0 \subsetneq \var{Pa}_1 \subsetneq \dots
\subsetneq \var{Pa}
$$
where $\var{Pa}_{-1}$ is the trivial variety, and
$\var{Pa}_k = V(\Bnalg{k})$ for $k\in\mathbb{N}$.
\end{theorem}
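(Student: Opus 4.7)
The plan is to combine Theorem~\ref{thm:si-descr} with J\'onsson's Lemma, which applies since $\var{Pa}$ is congruence distributive (the distributive lattice reduct forces this).

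I would first establish the chain $V(\Bnalg{0})\subsetneq V(\Bnalg{1})\subsetneq\cdots$. For $n\leq m$, any Boolean embedding $\alg{B}_n\hookrightarrow\alg{B}_m$ extends to a p-algebra embedding $\Bnalg{n}\hookrightarrow\Bnalg{m}$ by sending the adjoined top to the adjoined top; on the Boolean part, pseudo-complementation coincides with Boolean complementation, which is preserved. So $V(\Bnalg{n})\subseteq V(\Bnalg{m})$. Strictness follows from J\'onsson: since $\Bnalg{m}$ is SI, $\Bnalg{m}\in V(\Bnalg{n})$ would force $\Bnalg{m}\in HS\Bnalg{n}$ (a finite algebra admits no nontrivial ultrapowers), and cardinality gives $m\leq n$.

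Next I would classify the subvarieties. Every nontrivial SI p-algebra contains $\{0,1\}$ as a subuniverse, namely $\Bnalg{0}$, so every nontrivial subvariety $\var{V}$ contains $\Bnalg{0}$. There are two cases. If $\Bnalg{n}\in\var{V}$ for every $n$, I would show $\var{V}=\var{Pa}$ by a finite-subalgebra argument: any equation failing somewhere in $\var{Pa}$ fails on a finite tuple in some SI quotient $\sipalg{B}$, and the subalgebra generated by that tuple is a finite p-subalgebra of $\sipalg{B}$; since its Boolean part is a finite Boolean subalgebra of $\alg{B}$, hence some $\alg{B}_m$, this subalgebra has the form $\Bnalg{m}$, so the equation already fails in an algebra of $\var{V}$. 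Otherwise, let $k$ be the largest index with $\Bnalg{k}\in\var{V}$; I would show $\var{V}=V(\Bnalg{k})$ by checking that every SI $\sipalg{B}\in\var{V}$ has $|B|\leq 2^k$. If not, $\alg{B}$ would contain $\alg{B}_{k+1}$ as a Boolean subalgebra, which lifts to $\Bnalg{k+1}\leq\sipalg{B}\in\var{V}$, contradicting the maximality of $k$. Thus $\sipalg{B}\cong\Bnalg{m}$ for some $m\leq k$, and so $\sipalg{B}\in S\Bnalg{k}\subseteq V(\Bnalg{k})$.

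Together with the trivial variety $\var{Pa}_{-1}$, this exhausts the lattice of subvarieties and yields the $\omega+1$-chain. The main point of care is the finite-subalgebra argument in the first case: one must apply Theorem~\ref{thm:si-descr} not to $\sipalg{B}$ itself but to its finite subalgebras, using that any subalgebra containing $1$ has universe $B'\cup\{1\}$ with $B'$ a Boolean subalgebra of $\alg{B}$. The remainder is bookkeeping with J\'onsson's Lemma, cardinalities, and the subalgebra structure of finite Boolean algebras.
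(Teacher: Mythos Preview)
The paper does not supply a proof of this theorem: it is stated as Lee's result and cited from~\cite{Lee70}, so there is no in-paper argument to compare against. Your reconstruction is essentially the standard proof and is correct in outline.

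One point deserves explicit mention. In the case where every $\Bnalg{n}$ lies in $\var{V}$, you assert that ``the subalgebra generated by that tuple is a finite p-subalgebra of $\sipalg{B}$.'' This is local finiteness of $\var{Pa}$, which you are using without comment. It is true and well known (the paper itself invokes it later, in Section~5), but it is not entirely trivial and does not follow from congruence distributivity or from Theorem~\ref{thm:si-descr} alone; you should flag it as an ingredient. The rest of your argument---the embedding $\Bnalg{n}\hookrightarrow\Bnalg{m}$, the use of J\'onsson's Lemma for strictness, and the observation that a nontrivial p-subalgebra of $\sipalg{B}$ has the form $\Bnalg{m}$ (since any element $a\notin\{0,1\}$ forces $a\vee a^*=e$ into the subalgebra, making $A\setminus\{1\}$ a Boolean subalgebra)---is sound.
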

In particular $\var{Pa}_0$ is the variety $\var{BA}$ of Boolean algebras.
The variety $\var{Pa}_1$, of \emph{Stone algebras},
was studied, e.g., in~\cite{BH70}, \cite{BG71} and~\cite{Pri75}.

The following observation shows that on subdirectly irreducible algebras we have
a term operation which behaves almost like the dual of Boolean
implication.

\begin{lemma}\label{lem:si-cond}
Let $\sipalg{B}$ be a subdirectly irreducible p-algebra and let
$e$ be the unique subcover of $1$ in $\sipalg{B}$. Then
for any $a,b\in \overline{B}$ we have
$$
a\wedge b^* = 0 \iff a\leq b \text{ or } (a=1 \text{ and } b= e).
$$
\end{lemma}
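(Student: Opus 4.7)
The plan is to run a case analysis on where $a$ and $b$ sit in $\sipalg{B} = B\uplus\{1\}$, using the explicit description of ${}^*$ given by Theorem~\ref{thm:si-descr}. The right-hand side of the equivalence has two disjuncts, and the analysis will show that the special-case disjunct ``$a = 1$ and $b = e$'' arises exactly at the one point where ${}^*$ on $\sipalg{B}$ deviates from honest Boolean negation on $\alg{B}$.

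First I would dispose of the boundary values of $b$. If $b = 1$, then $b^* = 0$, so $a\wedge b^* = 0$ holds, and $a\leq 1 = b$ holds trivially. If $b = 0$, then $b^* = 1$ and $a\wedge b^* = a$, which is $0$ iff $a = 0$, i.e., iff $a\leq b$; the second disjunct is vacuous here (since $e\neq 0$ in a nontrivial $\sipalg{B}$).

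The main case is $b\in B\setminus\{0\}$, where $b^* = \neg b\in B$ is the Boolean complement. If $a\in B$, then $a\wedge b^* = a\wedge\neg b$ is computed entirely inside $\alg{B}$, so by Boolean algebra it equals $0$ iff $a\leq b$ in $\alg{B}$, which coincides with $a\leq b$ in $\sipalg{B}$; the second disjunct is again vacuous because $a\neq 1$. If instead $a = 1$, then $1\wedge\neg b = \neg b$ (since $\neg b\in B$ and $1$ is the top of $\sipalg{B}$), and this equals $0$ iff $\neg b = 0$ in $\alg{B}$, iff $b = e$. In this sub-case $a = 1\not\leq b$ (because $b\in B$), so the first disjunct fails and the equivalence is witnessed precisely by the second disjunct.

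There is no real obstacle; the proof is a bookkeeping exercise built on Theorem~\ref{thm:si-descr}. The one point worth flagging is the asymmetry at $b = e$: the value $e^* = \neg e = 0$ ``forgets'' that $e$ sits strictly below $1$, which is exactly why the extra clause ``$a = 1$ and $b = e$'' is needed to restore the residuation-style equivalence on the nose.
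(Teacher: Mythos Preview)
Your proof is correct. It takes a different route from the paper: you do an explicit case split on where $a$ and $b$ sit in $B\uplus\{1\}$, computing $b^*$ directly from the piecewise definition in Theorem~\ref{thm:si-descr}, whereas the paper uses the general p-algebra equivalence $a\wedge b^* = 0 \iff a \leq b^{**}$ and then observes that in $\sipalg{B}$ the only $b$ with $b < b^{**}$ is $b = e$. The paper's argument is shorter and isolates the conceptual reason for the extra disjunct---the single failure of regularity at $e$---while yours is more elementary and self-contained, relying only on Boolean-algebra computations and the explicit construction of $\sipalg{B}$. One small terminological slip: ``$e\neq 0$ in a nontrivial $\sipalg{B}$'' should read ``nontrivial $\alg{B}$'', since $\Bnalg{0}$ is a nontrivial p-algebra with $e = 0$; this degenerate case is an artefact of the lemma's statement rather than a defect of your argument.
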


\begin{proof}
In any p-algebra we have $a\wedge b^* = 0$ if and only if
$a\leq b^{**}$, so the claim holds whenever $b = b^{**}$.
Since $\alg{B}$ is subdirectly irreducible, the only element
$b$ for which $b < b^{**}$ is $b = e$. But then $a\wedge e^* = 0$ for any $a$,
and so $a\not\leq e$ only if $a = 1$.  
\end{proof}  

Identities defining $\var{Pa}_k$ were given in~\cite{Lee70}. We give different
ones below. 

\begin{lemma}\label{lem:width}
For any $m>0$, the variety $\var{Pa}_m$ is axiomatised by the single
identity
\begin{equation*}
\tag{$\mathbf{ib}_{m}$}\bigvee_{i=1}^{m+1} (x_i \wedge \bigwedge_{j\neq i} x_j^*)^* = 1. 
\end{equation*}
\end{lemma}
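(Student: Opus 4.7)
The plan is to exploit the chain structure of the subvariety lattice given by Theorem~\ref{thm:var-descr}. Since $\var{Pa}_m = V(\Bnalg{m})$ and subvarieties of $\var{Pa}$ form a chain, the equational class cut out in $\var{Pa}$ by $(\mathbf{ib}_{m})$ must itself be some $\var{Pa}_n$ or $\var{Pa}$. It therefore suffices to verify that $(\mathbf{ib}_{m})$ holds in $\Bnalg{m}$ and fails in $\Bnalg{m+1}$.

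To streamline the analysis, set $a_i\deq x_i\wedge\bigwedge_{j\neq i}x_j^*$ and observe that on any subdirectly irreducible $\sipalg{B}$ the top element $1$ is join-prime, and $c^*=1$ iff $c=0$. Hence $\bigvee_{i=1}^{m+1}a_i^*=1$ in $\sipalg{B}$ holds precisely when at least one $a_i$ equals $0$. So on subdirectly irreducibles, $(\mathbf{ib}_{m})$ reduces to the assertion that among the $m+1$ elements $a_i$ at least one vanishes.

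The failure in $\Bnalg{m+1}$ is then immediate: taking $x_1,\dots,x_{m+1}$ to be the atoms of $\alg{B}_{m+1}$, pairwise disjointness of atoms gives $a_i=x_i\neq 0$ for every $i$. For validity in $\Bnalg{k}$ with $k\leq m$, assume toward contradiction that every $a_i$ is nonzero. No $a_i$ can equal $1$: this would force $x_i=1$ and $x_j^*=1$, i.e.\ $x_j=0$, for every $j\neq i$, making each such $a_j$ zero. Hence each $a_i$ sits in $B\setminus\{0\}$. Moreover, for $i\neq j$ we have $a_i\wedge a_j\leq x_j\wedge x_j^*=0$, so $\{a_1,\dots,a_{m+1}\}$ would be an antichain of $m+1$ pairwise disjoint nonzero elements inside a Boolean algebra with only $k\leq m$ atoms, which is impossible.

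The main technical point is the closing antichain bound; once Theorem~\ref{thm:var-descr} reduces the problem to the finite algebras $\Bnalg{k}$, the rest is routine. The only subtle step is the boundary case $a_i=1$, which uses the distinction between the p-algebra $\sipalg{B}$ and its Boolean part $\alg{B}$. I do not anticipate any serious obstacle.
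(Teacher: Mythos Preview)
Your argument is correct. The overall strategy matches the paper's: reduce via Theorem~\ref{thm:var-descr} to checking the identity on the subdirectly irreducible algebras $\Bnalg{k}$, exhibit the same failing valuation on $\Bnalg{m+1}$ (atoms), and derive a pigeonhole contradiction for $k\leq m$.

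The difference lies in the validity step. The paper, assuming $a_i\wedge\bigwedge_{j\neq i}a_j^*>0$, passes to atoms $b_i\leq a_i$ and invokes Lemma~\ref{lem:si-cond} to show the $b_i$ are pairwise distinct atoms, forcing $k\geq m+1$. You instead observe directly that the elements $a_i$ themselves are pairwise disjoint (since $a_i\wedge a_j\leq x_j\wedge x_j^*=0$), rule out $a_i=1$ by a one-line argument, and conclude that the $a_i$ are $m+1$ pairwise disjoint nonzero elements of $\alg{B}_k$. Your route is a bit more elementary: it avoids the auxiliary atoms and the appeal to Lemma~\ref{lem:si-cond}, at the cost of the small case split on $a_i=1$. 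Both reach the same pigeonhole endpoint.
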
  

\begin{proof}
It is easy to see that $\Bnalg{m+1}\not\models \mathbf{ib}_{m}$
on the valuation sending each $x_i$ to a different atom of $\alg{B}_{m+1}$,
so $\var{Pa}_k\not\models \mathbf{ib}_{m}$ for any $k>m$. Obviously,  
all infinite $\sipalg{B}$ falsify ($\mathbf{ib}_{k}$) for any $k$,
as $\Bnalg{n}\leq\sipalg{B}$ for any $n$.
  
Suppose ($\mathbf{ib}_{m}$) fails on a subdirectly irreducible algebra
$\Bnalg{k}$ for $k\leq m$, and consider a falsifying valuation $v$ with
$v(x_i) = a_i$. Then, for each $i \in\{1,\dots, m+1\}$,
we must have $a_i \wedge \bigwedge_{j\neq i} a_j^*> 0$ or equivalently
$a_i \wedge (\bigvee_{j\neq i}a_j)^* > 0$.
Since $\alg{B}_k$ is atomic, we can choose for each $i$ an
atom $b_i\leq a_i$ such that $b_i \wedge (\bigvee_{j\neq i}a_j)^* > 0$. Then,
since $(\bigvee_{j\neq i}b_j)^*\geq(\bigvee_{j\neq i} a_j)^*$, we get 
\begin{equation}\label{eq:one}
\tag{$\star$} b_i\wedge (\bigvee_{j\neq i}b_j)^* > 0
\end{equation}
where each $b_i$ is an atom. Moreover,
for each $i$  we must have $\bigvee_{j\neq i}b_j < e$ as otherwise
$(\bigvee_{j\neq i}b_j)^* = 0$ and hence
$b_i\wedge (\bigvee_{j\neq i}b_j)^* = 0$. Next, applying Lemma~\ref{lem:si-cond}
to~(\ref{eq:one}) we conclude that $b_i \not\leq \bigvee_{j\neq i}b_j$ for each $i$.
This implies that $b_i\neq b_j$ whenever $i\neq j$. But in $\alg{B}_k$ there
are at most $m$ atoms, so by pigeonhole principle we must have $b_\ell = b_r$
for some $\ell\neq r$, yielding a contradiction. 
\end{proof}

Obviously, the term-reduct of any Heyting algebra to the signature of p-algebras,
with $x^*\deq x\ra 0$, is a p-algebra. Throughout the article we will write
$\var{HA}$ for the variety of Heyting algebras, but to obtain
all p-algebras it suffices to consider \emph{Heyting algebras of height 3},
that is Heyting algebras satisfying the identity
$x\vee(x\ra(y\vee y^*)) = 1$ (several other defining identities are known).
We write $\var{HA}_3$ for this subvariety of $\var{HA}$.  
Let $\var{HA_3^r}$ be the class of term reducts of $\var{HA_3}$ to the signature
of p-algebras.

\begin{lemma}\label{lem:H3-subred}
$\var{Pa} = S(\var{HA_3^r})$.
\end{lemma}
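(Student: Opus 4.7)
The plan is to prove both inclusions. The easy direction, $S(\var{HA_3^r})\subseteq\var{Pa}$, I would dispatch by checking that any Heyting algebra with $x^*\deq x\ra 0$ satisfies the p-algebra identities (1)--(3); the only nontrivial one, $x\wedge(x\wedge y)^*=x\wedge y^*$, follows from $(x\wedge y)\ra 0 = x\ra(y\ra 0)$ together with the standard Heyting identity $x\wedge(x\ra z)=x\wedge z$. This is preserved by $S$. For the converse, I would reduce to the subdirectly irreducible case: since $\var{HA}_3$ is a variety, $\var{HA_3^r}$ is closed under products, hence so is $S(\var{HA_3^r})$, and by Birkhoff it suffices to embed each subdirectly irreducible $\sipalg{B}$ (as described in Theorem~\ref{thm:si-descr}) into some member of $\var{HA_3^r}$.

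For this, the idea is to show that $\sipalg{B}$ is itself the p-algebra reduct of a Heyting algebra of height 3. The natural implication is $a\ra b = 1$ when $a\leq b$, $a\ra b = \neg a\vee b$ (computed in $\alg{B}$) when $a,b\in B$ and $a\not\leq b$, and $1\ra b = b$ when $b\in B$. I would verify the residuation law $a\wedge c\leq b\iff c\leq a\ra b$ by case analysis; the only delicate point is that no element of $B$ can dominate the external top $1$ (since $B$'s top $e$ satisfies $e<1$), so in the middle case the residual must be taken inside $\alg{B}$, where it coincides with $\neg a\vee b$. A direct computation then confirms that $x\ra 0$ agrees with the pseudo-complement from Theorem~\ref{thm:si-descr}, so the p-algebra reduct really is $\sipalg{B}$.

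Finally, I would check the height-3 identity $x\vee(x\ra(y\vee y^*))=1$ in this Heyting algebra. The key observation is that $y\vee y^*$ always lies in $\{e,1\}$: it equals $1$ when $y\in\{0,1\}$ and $e$ otherwise. In the first subcase the identity is immediate since $x\ra 1=1$; in the second, $x\ra e = 1$ whenever $x\neq 1$ (because then $x\leq e$), and $x=1$ makes the first disjunct equal to $1$ anyway. I do not expect any serious obstacle: all the structural work is done by Theorem~\ref{thm:si-descr}, and the remainder is choosing the right implication and performing routine case analysis.
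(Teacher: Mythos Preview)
Your proposal is correct and follows essentially the same route as the paper: both argue the nontrivial inclusion by taking a subdirect decomposition of an arbitrary $\alg{A}\in\var{Pa}$ into factors of the form $\sipalg{B}$, observing that each $\sipalg{B}$ is the p-algebra reduct of a member of $\var{HA}_3$, and then embedding $\alg{A}$ into the product. The only difference is one of detail: the paper simply asserts that $\sipalg{B}$ belongs to $\var{HA}_3$, whereas you explicitly construct the Heyting implication on $\sipalg{B}$ and verify the height-3 identity---this is a welcome elaboration but not a different strategy.
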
  

\begin{proof}
It is clear that $\var{Pa} \supseteq S(\var{HA_3^r})$. For the converse, take an
arbitrary $\alg{A}\in\var{Pa}$ and consider any subdirect representation
of $\alg{A}$ with subdirectly irreducible factors. By
Theorem~\ref{thm:si-descr},
each factor is of the form $\sipalg{B}$, and therefore
belongs to $\var{HA_3}$. Hence, so does the product of the factors. Since
$\alg{A}$ is subdirectly embedded (as a p-algebra) in that product, we get 
$\alg{A}\in S(\var{HA_3^r})$. 
\end{proof}  

\subsection{Universal algebraic properties}
A variety $\mathcal{V}$ with a term-definable constant $t$ is called
\emph{pointed} or $t$-\emph{pointed}. A $t$-pointed variety
is called $t$-\emph{congruence orderable}
(or simply $t$-\emph{orderable}) if for every algebra $\alg{A}\in\mathcal{V}$
and every $a,b\in A$ we have that $\Theta(a,t) = \Theta(b,t)$ implies
$a=b$, where as usual we write $\Theta(x,y)$ for the congruence generated by the
pair $(x,y)$. 

\begin{lemma}\label{lem:orderability}
Let $\alg{A}\in\var{Pa}$ and $a,b\in A$.
If $(b,1)\in\Theta(a,1)$, then $a\leq b$ in the lattice
ordering of $\alg{A}$. Therefore $\var{Pa}$ is 1-orderable. 
\end{lemma}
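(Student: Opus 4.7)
The plan is to bootstrap from the Heyting algebra case via Lemma~\ref{lem:H3-subred}. That lemma lets us embed the given p-algebra $\alg{A}$ as a p-subalgebra of some Heyting algebra $\alg{H}\in\var{HA}_3$. Since $x^* = x\ra 0$, every Heyting congruence on $\alg{H}$ is automatically compatible with ${}^*$, so its restriction to $A$ is a p-algebra congruence. Applied to the principal Heyting congruence $\Theta^{\alg{H}}(a,1)$, this restriction is a p-algebra congruence on $\alg{A}$ containing the pair $(a,1)$, and therefore contains $\Theta^{\alg{A}}(a,1)$.

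Next I would invoke the well-known $1$-regularity of Heyting algebras: the $1$-block of $\Theta^{\alg{H}}(a,1)$ is exactly the principal filter $\up{a}$ of $\alg{H}$. Hence, if $(b,1)\in\Theta^{\alg{A}}(a,1)$, the previous paragraph gives $(b,1)\in\Theta^{\alg{H}}(a,1)$, so $a\leq b$ in $\alg{H}$, and since $\alg{A}$ is a sublattice of $\alg{H}$ this is the same as $a\leq b$ in $\alg{A}$.

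For $1$-orderability, suppose $\Theta(a,1)=\Theta(b,1)$ in some $\alg{A}\in\var{Pa}$. Then $(b,1)\in\Theta(a,1)$ and $(a,1)\in\Theta(b,1)$, so two applications of the first part yield $a\leq b$ and $b\leq a$, i.e. $a=b$, as required.

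I do not expect a substantial obstacle: the only non-trivial ingredient is Lemma~\ref{lem:H3-subred}, which has already been established, and the transfer argument relies solely on the fact that every Heyting congruence automatically respects the pseudocomplement. Alternatively, one could avoid Lemma~\ref{lem:H3-subred} by passing through a subdirect representation into the subdirectly irreducible factors $\sipalg{B_i}$ of Theorem~\ref{thm:si-descr} and arguing coordinatewise (each $\sipalg{B_i}$ is itself a Heyting algebra of height $3$), but this just repeats the same idea fibre by fibre.
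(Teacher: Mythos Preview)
Your proof is correct and follows essentially the same route as the paper: embed $\alg{A}$ into (the p-algebra reduct of) a Heyting algebra via Lemma~\ref{lem:H3-subred}, observe that $\Theta^{\alg{A}}(a,1)$ is contained in $\Theta^{\alg{H}}(a,1)$, and then use that in Heyting algebras $1/\Theta^{\alg{H}}(a,1)=\up{a}$. The paper phrases the containment as the chain $\Theta^{\alg{A}}(a,1)\subseteq\Theta^{\alg{B}^r}(a,1)\subseteq\Theta^{\alg{B}}(a,1)$, while you obtain it by restricting $\Theta^{\alg{H}}(a,1)$ to $A$; these are the same argument in different words.
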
  

\begin{proof}
Take $\alg{A}\in\var{Pa}$ and elements $a,b\in A$ such that
$(b,1)\in\Theta(a,1)$. By Lemma~\ref{lem:H3-subred} we know that
$\alg{A}\leq \alg{B}^r$ where $\alg{B}^r$ is a p-algebra reduct
of $\alg{B}\in\var{HA_3}$. Hence,
$\Theta^{\alg{A}}(a,1)\subseteq\Theta^{\alg{B}^r}(a,1)
\subseteq\Theta^{\alg{B}}(a,1)$. Therefore
$(b,1)\in\Theta^{\alg{A}}(a,1)$ implies $b\in \up{a}$,
and so by well-known properties of Heyting algebras $a\leq^{\alg{B}} b$. 
Since the lattice ordering is preserved by subreducts, we get
$a\leq^{\alg{A}} b$ proving the first statement. 

Next, suppose $\Theta(a,1) = \Theta(b,1)$. By the first statement we get
$a\leq b$ and $b\leq a$, so $a=b$ proving orderability.
\end{proof}

The next result, due to Idziak, Słomczyńska and Wroński~\cite{ISW09},
characterises subdirectly
irreducible algebras in 1-orderable varieties, generalising several such
characterisations for particular cases, most importantly, for Heyting algebras.
It is also a good place to introduce some notation we will very frequently use
henceforth. 
For an algebra $\alg{A}$ we let $\cm{\alg{A}}$ stand for the
set of completely meet-irreducible congruences, and
$\Cm{\alg{A}}$ for the poset $(\cm{\alg{A}};\subseteq)$ of completely
meet-irreducible congruences ordered by inclusion. The distinction
between the two may appear too pedantic at first, but the reader will soon see
that our construction relies crucially on a \emph{different} ordering of
$\cm{\alg{A}}$, so the distinction is needed. To avoid unnecessary pedantry,
we will not differentiate between the set of congruences on $\alg{A}$ and
the lattice of congruences on $\alg{A}$, writing $\Con{\alg{A}}$ for both.
For any $\mu\in\cm{\alg{A}}$ we write $\mu^+$ for the unique cover of
$\mu$ in $\Con{\alg{A}}$.

\begin{theorem}\label{thm:orderable-si}
Let $\alg{A}$ be an algebra from a 1-orderable variety, and let
$\mu\in\cm{\alg{A}}$. Then, the following hold:
\begin{enumerate}
\item $\alg{A}/\mu\setminus\{1/\mu\}$ has the largest element $e/\mu$,
\item $1/\mu^+ = 1/\mu\cup e/\mu$,
\item $a/\mu^+ = a/\mu$ for any $a\in A$ such that $a/\mu\notin\{1/\mu,e/\mu\}$.  
\end{enumerate}
\end{theorem}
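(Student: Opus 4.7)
Since $\mu$ is completely meet-irreducible, $\alg{A}/\mu$ is subdirectly irreducible with monolith $\theta := \mu^+/\mu$; equivalently, every nontrivial congruence of $\alg{A}/\mu$ contains $\theta$. Writing $\bar{a}$ for $a/\mu$, my plan is to pin down $\theta$ as the equivalence relation whose sole nontrivial class is $\{\bar{1}, \bar{e}\}$ for a uniquely determined $\bar{e}$, after which parts (1)--(3) all fall out cleanly.

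First I would show that every nontrivial $\theta$-class contains $\bar{1}$. If $\bar{a} \neq \bar{b}$ lie in a common $\theta$-class and both differ from $\bar{1}$, then $(\bar{a}, \bar{b}) \in \theta \subseteq \Theta(\bar{a}, \bar{1})$ (the inclusion by nontriviality of $\Theta(\bar{a}, \bar{1})$ plus monolith minimality), and transitivity gives $\Theta(\bar{b}, \bar{1}) \subseteq \Theta(\bar{a}, \bar{1})$. A symmetric argument yields equality, and 1-orderability (which the quotient $\alg{A}/\mu$ inherits, being again in the variety) forces $\bar{a} = \bar{b}$, a contradiction. Hence $\theta$ has a single nontrivial class, containing $\bar{1}$. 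A parallel argument caps its size at two: for any $\bar{x} \in \bar{1}/\theta \setminus \{\bar{1}\}$, the inclusion $\Theta(\bar{x}, \bar{1}) \subseteq \theta$ comes from $(\bar{x}, \bar{1}) \in \theta$, while nontriviality plus monolith minimality gives the reverse inclusion, so $\Theta(\bar{x}, \bar{1}) = \theta$; 1-orderability then determines $\bar{x}$ uniquely, and I name this element $\bar{e}$.

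With the structure of $\theta$ fixed, parts (2) and (3) are immediate from the correspondence between $\theta$-classes in $\alg{A}/\mu$ and $\mu^+$-classes in $\alg{A}$: $1/\mu^+ = 1/\mu \cup e/\mu$, and $a/\mu^+ = a/\mu$ whenever $\bar{a} \notin \{\bar{1}, \bar{e}\}$, since in that case $\bar{a}/\theta = \{\bar{a}\}$. For part (1) I would apply Lemma~\ref{lem:orderability} inside $\alg{A}/\mu$: for any $\bar{x} \neq \bar{1}$ we have $(\bar{e}, \bar{1}) \in \theta \subseteq \Theta(\bar{x}, \bar{1})$, whence $\bar{x} \leq \bar{e}$ in the lattice order, identifying $\bar{e}$ as the top of $\alg{A}/\mu \setminus \{\bar{1}\}$. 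The subtlest point, I expect, is the recurring step ``$\Theta(\bar{a}, \bar{1})$ is nontrivial, hence $\supseteq \theta$'': this is where subdirect irreducibility of $\alg{A}/\mu$, i.e., the assumption $\mu \in \cm{\alg{A}}$, is essential, and it should be signposted at each invocation rather than hidden inside a manipulation of $\Con{\alg{A}/\mu}$.
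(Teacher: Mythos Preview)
The paper does not prove this theorem; it is quoted from Idziak, S\l{}omczy\'nska and Wro\'nski~\cite{ISW09} and stated without proof. So there is no in-paper argument to compare against, and your proposal stands on its own. Your strategy---identify the monolith $\theta=\mu^+/\mu$, use $1$-orderability to force $\theta$ to have a single nontrivial block $\{\bar 1,\bar e\}$, then read off (1)--(3)---is exactly the right one and is essentially how the result is proved in the cited source.

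There is one genuine slip. In part~(1) you invoke Lemma~\ref{lem:orderability} to conclude $\bar x\leq \bar e$ ``in the lattice order''. But Lemma~\ref{lem:orderability} is stated and proved only for $\var{Pa}$, whereas Theorem~\ref{thm:orderable-si} concerns an \emph{arbitrary} $1$-orderable variety, where there need be no lattice structure at all. The phrase ``largest element'' in (1) refers to the partial order that $1$-orderability supplies, namely $\bar x\leq \bar y\iff \Theta(\bar y,\bar 1)\subseteq\Theta(\bar x,\bar 1)$. Your own computation already delivers this: from $(\bar e,\bar 1)\in\theta\subseteq\Theta(\bar x,\bar 1)$ you get $\Theta(\bar e,\bar 1)\subseteq\Theta(\bar x,\bar 1)$, i.e.\ $\bar x\leq \bar e$ in that order, with no appeal to Lemma~\ref{lem:orderability} needed. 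So drop the reference to the lemma and to ``lattice order'', and state explicitly which order you mean; then the argument is complete and self-contained.
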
  

In other words, the monolith of any subdirectly irreducible algebra in an
orderable variety has precisely one class which is a doubleton; all other
classes are singletons. See Example~\ref{ex:si-dual}
for a picture of a subdirectly irreducible p-algebra.

A property commonly considered with respect to pointed varieties is that of
\emph{subtractivity}, introduced in Gumm, Ursini~\cite{GU84} and thoroughly
investigated in a series of articles~\cite{AU96}, \cite{AU97}, \cite{AU97a}, 
by Aglian\`o and Ursini. Recall that a $t$-pointed variety $\mathcal{V}$ is
subtractive if there is a binary term $s(x,y)$ such that $\mathcal{V}$
satisfies the equations $s(x,x) = t$ and $s(x,t) = x$. Every subtractive variety
is \emph{congruence permutable at $t$}, that is, $t/\alpha\circ\beta =
t/\beta\circ\alpha$ holds for any algebra $\alg{A}\in\mathcal{V}$ and
any $\alpha,\beta\in\Con{\alg{A}}$.

\begin{prop}\label{prop:subtractive}
The variety $\var{Pa}$ is $0$-subtractive, as witnessed by the term 
$x-y \deq (x\vee y)\wedge(x\wedge y)^*$. 
\end{prop}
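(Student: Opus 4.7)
The plan is essentially a direct equational verification, since the proposition asserts only that the specific term $s(x,y) = (x\vee y)\wedge(x\wedge y)^*$ satisfies the two defining identities of a subtraction term at $0$, namely $s(x,x) = 0$ and $s(x,0) = x$.

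First, I would compute $s(x,x) = (x\vee x)\wedge(x\wedge x)^* = x \wedge x^*$ using idempotency of $\vee$ and $\wedge$. It then remains to observe that $x\wedge x^* = 0$ holds in every p-algebra: this is immediate from the defining equivalence $u\wedge v = 0 \iff u \leq v^*$ stated at the very beginning, applied with $u = x^*$ and $v = x$, which gives $x^* \wedge x = 0 \iff x^* \leq x^{**}$, the latter being a standard consequence (take $u = v = x^*$ to get $x^* \wedge x^* = 0 \iff x^* \leq x^{**}$... actually it's simpler to note that by definition $x^*$ is the largest $z$ with $z\wedge x = 0$, so in particular $x^*\wedge x = 0$).

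Second, I would compute $s(x,0) = (x\vee 0)\wedge(x\wedge 0)^* = x \wedge 0^*$, using that $0$ is the lattice bottom. Applying axiom~(2) from the introduction, $0^* = 1$, this simplifies to $x\wedge 1 = x$, as required.

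Since both identities hold in $\var{Pa}$, the term $x-y$ witnesses $0$-subtractivity by definition. There is no real obstacle here; the only conceptual point is recognising that the two needed identities ($x\wedge x^* = 0$ and $0^* = 1$) are immediate from the axiomatisation of p-algebras given at the start of the paper, so no appeal to the structure theorems of Section~2 is required.
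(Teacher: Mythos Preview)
Your proposal is correct and matches the paper's own proof essentially line for line: both simply compute $x-x = x\wedge x^* = 0$ and $x-0 = x\wedge 0^* = x\wedge 1 = x$. The only difference is that you spell out why $x\wedge x^* = 0$, which the paper takes as evident.
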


\begin{proof}
We have $x-x = (x\vee x)\wedge(x\wedge x)^* = x\wedge x^* = 0$, and
$x-0 = (x\vee 0)\wedge(x\wedge 0)^* = x\wedge 0^* = x\wedge 1 = x$.
\end{proof}

\begin{prop}\label{prop:not-0-orderable}
A variety $\mathcal{V}$ of  p-algebras is $0$-orderable if and only
if $\mathcal{V}\in\{\var{Pa}_{-1}, \var{Pa}_0\}$. 
\end{prop}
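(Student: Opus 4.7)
The plan is to handle the two directions separately and reduce the harder one to a short calculation in $\Bnalg{1}$.

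For the $(\Leftarrow)$ direction, $\var{Pa}_{-1}$ is vacuously $0$-orderable. For $\var{Pa}_0 = \var{BA}$ I would invoke the standard correspondence between congruences and ideals in Boolean algebras: every congruence is uniquely determined by its $0$-class, which is an ideal, and the $0$-class of $\Theta(a, 0)$ is the principal ideal $\dw{a}$. Hence $\Theta(a, 0) = \Theta(b, 0)$ forces $\dw{a} = \dw{b}$, and so $a = b$.

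For the $(\Rightarrow)$ direction, Theorem~\ref{thm:var-descr} tells us that any subvariety $\mathcal{V}$ of $\var{Pa}$ outside $\{\var{Pa}_{-1}, \var{Pa}_0\}$ contains $\var{Pa}_1$ and hence $\Bnalg{1}$. So it suffices to show that $\Bnalg{1}$ itself is not $0$-orderable. Writing $\Bnalg{1} = \{0, e, 1\}$ with $e$ the unique subcover of $1$, I would check that both $\Theta(1, 0)$ and $\Theta(e, 0)$ equal the total congruence $\nabla$. The first is immediate, since identifying $0$ with $1$ collapses any bounded lattice: every $x$ equals $x \wedge 1$, which is then congruent to $x \wedge 0 = 0$. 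For the second, I apply ${}^*$ to the pair $(e, 0)$ to obtain $(e^*, 0^*) = (0, 1) \in \Theta(e, 0)$, and therefore $\Theta(e, 0) \supseteq \Theta(0, 1) = \nabla$. Since $e \neq 1$, this exhibits the failure of $0$-orderability.

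There is no real obstacle here beyond spotting that $\Bnalg{1}$ is the critical algebra. The asymmetry with the $1$-orderability established in Lemma~\ref{lem:orderability} is explained by the defining equations $0^* = 1$ and $e^* = 0$: applying ${}^*$ turns the generator $(e, 0)$ into $(0, 1)$ and thus collapses the whole algebra, while no analogous move is available starting from the pair $(e, 1)$.
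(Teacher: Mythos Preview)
Your proof is correct and follows essentially the same idea as the paper's: both exhibit the failure of $0$-orderability via $\Theta(e,0) = \Theta(1,0)$ in a subdirectly irreducible $\Bnalg{n}$ with $n>0$, using $e^* = 0$ to force the collapse. The paper's version is terser---it does not spell out the $(\Leftarrow)$ direction at all and works in a general $\Bnalg{n}$ rather than specialising to $\Bnalg{1}$---but the substance is identical.
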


\begin{proof}
Assume $\Bnalg{n}\in\mathcal{V}$ for $n>0$. Then, $\Theta(0,e) = \Theta(0,1)$,
but $e\neq 1$ showing failure of $0$-orderability.   
\end{proof}

\begin{prop}\label{prop:non-n-permut}
A variety $\mathcal{V}$ of  p-algebras is congruence permutable if and only
if $\mathcal{V}\in\{\var{Pa}_{-1}, \var{Pa}_0\}$.
Otherwise, $\mathcal{V}$ is not congruence $n$-permutable for any $n>0$. Indeed,
$\var{Pa}_k$ with $k>1$ is not $n$-permutable at $1$ for any $n>0$.
Every variety of p-algebras is permutable at $0$.
\end{prop}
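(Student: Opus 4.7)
The plan is to address the four assertions separately. The varieties $\var{Pa}_{-1}$ and $\var{Pa}_0 = \var{BA}$ are permutable for classical reasons: the trivial variety tautologically, and Boolean algebras by the well-known Maltsev term (writable in $\var{Pa}$-signature). The final clause, permutability at $0$, is immediate from Proposition~\ref{prop:subtractive} together with the standard fact, recalled in the preliminaries, that every subtractive variety is permutable at its distinguished constant.

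The substantive task is to show, for every $\var{V} \supsetneq \var{Pa}_0$ and every $n > 0$, that $\var{V}$ fails $n$-permutability at $1$. By Theorem~\ref{thm:var-descr} any such $\var{V}$ contains $\var{Pa}_1$, so I will build the witnesses inside $\var{Pa}_1$; they will \emph{a fortiori} lie in every $\var{Pa}_k$ with $k > 1$, yielding the ``indeed'' clause, while the case $n = 2$ also gives the ``only if'' direction of the equivalence. The witnesses I propose are the finite chain p-algebras $D_m = \{0 = d_0 < d_1 < \cdots < d_m = 1\}$, for $m \geq 2$, with $d_0^* = 1$ and $d_i^* = 0$ for $i \geq 1$. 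A direct verification shows $D_m$ satisfies Stone's identity $x^* \vee x^{**} = 1$, so $D_m \in \var{Pa}_1$. On $D_m$ let $\alpha$ have non-trivial blocks $\{d_{2i-1}, d_{2i}\}$ and let $\beta$ have non-trivial blocks $\{d_{2i}, d_{2i+1}\}$; both are lattice congruences (blocks are intervals) and $^*$-compatible since $d_i^* = 0$ whenever $i \geq 1$.

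Take $m$ large enough and of a parity placing $d_m$ inside an $\alpha$-block (say $m$ even). A direct induction on $N$ then shows that, for $N < m$, the $N$-fold alternating composition $\alpha \circ \beta \circ \alpha \circ \cdots$ has $1$-class $\{d_{m-N}, \ldots, d_m\}$, whereas $\beta \circ \alpha \circ \beta \circ \cdots$ has $1$-class $\{d_{m-N+1}, \ldots, d_m\}$. Each step of the alternation enlarges the class by exactly one element going down the chain, because every $\alpha$-block meets a $\beta$-block in a single point and vice versa. Choosing $m > N$ thus yields a failure of $N$-permutability at $1$, and since $N$ is arbitrary the claim follows for every $\var{V} \supseteq \var{Pa}_1$.

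The main obstacle is the combinatorial bookkeeping in the induction: one must verify that the class grows by exactly one element at every step, handle the parity at both endpoints correctly, and confirm that growth saturates at $\{d_1, \ldots, d_m\}$ because no congruence on $D_m$ can pair $d_0 = 0$ with any other element without forcing total collapse through $0^* = 1$. Everything else reduces to facts already at hand.
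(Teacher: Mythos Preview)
Your approach is essentially the paper's: both exhibit the finite chain p-algebras in $\var{Pa}_1$ (the paper calls them $\mathbf{C}_{n+2}$, you call them $D_m$) and invoke the standard interlocking-congruences argument to defeat $n$-permutability at $1$; the paper simply says ``the standard argument applies'' where you spell out $\alpha,\beta$ and the induction, and both derive permutability at $0$ from $0$-subtractivity.

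One small imprecision to clean up: as written, your $\beta$ with ``non-trivial blocks $\{d_{2i},d_{2i+1}\}$'' includes $\{d_0,d_1\}$ when $i=0$, and that is \emph{not} a p-algebra congruence block, since $d_0^*=1$ while $d_1^*=0$ forces total collapse. Your own later remark about saturation at $\{d_1,\dots,d_m\}$ shows you know this, and your ${}^*$-compatibility justification (``$d_i^*=0$ whenever $i\geq 1$'') only covers indices $\geq 1$, so you should explicitly restrict $\beta$ to $i\geq 1$. With that fix the induction runs exactly as you describe.
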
  

\begin{proof}
The varieties $\var{Pa_{-1}}$ and $\var{Pa_0}$ are, respectively, the trivial
variety and the variety of Boolean algebras, so they are CP. In
$\var{Pa_1}$ there are algebras $\mathbf{C}_{n+2}$ with the universe
a finite chain $0 < c_{n} < c_{n-1} < \dots <  c_1  < 1$, and
with $c_i^* = 0$ for $k\in\{1,\dots,n-1\}$. Any lattice congruence
$\Theta$ which separates $c_n$ and $0$ is a p-algebra congruence, so
the standard argument showing failure of $n$-permutability applies; in
particular, it applies to congruence classes of $1$. The last statement follows
from $0$-subtractivity.
\end{proof}

A weakening of subtractivity, called \emph{quasi-subtractivity}, was
investigated by Kowalski, Paoli and Spinks in~\cite{KPS11}. 
We state the following result without proof and refer the
reader to~\cite{KPS11} for details.

\begin{prop}\label{prop:1-quasi-subrtractive}
The variety $\var{Pa}$ is quasi-subtractive with respect to $1$.
Terms witnessing it are $x\ra y \deq (x\wedge y^*)^*$ and $\Box x \deq x^{**}$.  
\end{prop}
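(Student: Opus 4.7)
The plan is to verify the defining axioms of $1$-quasi-subtractivity from~\cite{KPS11} directly for the proposed terms $x \ra y \deq (x \wedge y^*)^*$ and $\Box x \deq x^{**}$. The list of axioms comprises a handful of equations controlling $\ra$ and $\Box$ individually (such as $x \ra x \approx 1$, $\Box 1 \approx 1$, $\Box x \ra x \approx 1$, $1 \ra x \approx \Box x$, and $\Box\Box x \approx \Box x$), together with a bridge axiom of $\mathbf{K}$-style form, essentially $\Box(x \ra y) \wedge \Box x \leq \Box y$, that couples the two terms.

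I would dispatch the equational part by reducing each verification to three standard p-algebra identities: (a) $a \wedge a^* = 0$; (b) $a^{***} = a^*$; and (c) $(a \wedge b)^{**} = a^{**} \wedge b^{**}$, the last expressing that ${}^{**}$ is a meet-homomorphism. Each of the listed equations is then a one- or two-line computation; for example, $x \ra x = (x \wedge x^*)^* = 0^* = 1$ by (a), while $1 \ra x = (1 \wedge x^*)^* = x^{**} = \Box x$, and $\Box\Box x = x^{****} = x^{**} = \Box x$ by (b).

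The main obstacle is the bridge axiom. Using (b), one first simplifies $\Box(x \ra y) = ((x \wedge y^*)^*)^{**} = (x \wedge y^*)^*$, so the inequality reduces to $(x \wedge y^*)^* \wedge x^{**} \leq y^{**}$. By the defining pseudo-complement condition, this is equivalent to $(x \wedge y^*)^* \wedge x^{**} \wedge y^* = 0$. Since $y^*$ is regular (by (b)), identity (c) rewrites $x^{**} \wedge y^* = (x \wedge y^*)^{**}$, and the whole expression collapses to $(x \wedge y^*)^* \wedge (x \wedge y^*)^{**} = 0$, which is (a) applied to $a = x \wedge y^*$.

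Conceptually, the verification reflects the familiar fact that the image $\{a^{**} : a \in A\}$ carries a Boolean algebra structure on which $\ra$ acts as Boolean implication; quasi-subtractivity with respect to $1$ is the abstract way of saying that this \emph{Boolean skeleton} is always available inside any p-algebra.
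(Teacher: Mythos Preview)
The paper does not prove this proposition at all; it states the result and refers the reader to~\cite{KPS11} for details. Your direct verification therefore goes beyond what the paper offers, and the computations you sketch are correct. Each of the individual identities follows from the basic p-algebra facts (a)--(c) exactly as you indicate, and your reduction of the bridge axiom to $(x \wedge y^*)^* \wedge (x \wedge y^*)^{**} = 0$ via the meet-preservation of ${}^{**}$ is clean and efficient.

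The only caveat is that you hedge on the precise axiom list from~\cite{KPS11}, writing ``essentially'' for the bridge axiom and listing several identities that may or may not all appear verbatim in their Definition~3.1. For a self-contained proof you should quote the exact definition you are verifying. One standard axiom you do not mention explicitly, $x \ra 1 \approx 1$, is also immediate: $(x \wedge 1^*)^* = (x \wedge 0)^* = 0^* = 1$. With that addition your argument is complete.
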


Summing up, p-algebras are $1$-orderable and $0$-sub\-trac\-tive, but
neither $1$-sub\-trac\-tive nor $0$-orderable. They are quasi-subtractive with
respect to both $0$ and $1$ (with different witnessing terms, of course). 
If a variety is both $c$-subtractive and $c$-orderable with respect to the same
constant $c$, then (by the proof of
Proposition~3.6 in Słomczyńska~\cite{Slo12}) it is further
$c$-\emph{regular}, that is, has the property that 
$c/\alpha = c/\beta$ implies $\alpha=\beta$ for any congruences $\alpha$ and $\beta$. 
A $c$-orderable and $c$-regular variety is \emph{Fregean}. Heyting algebras, and
some of their subreducts, notably, \emph{Brouwerian semilattices},
\emph{Hilbert algebras}, and
\emph{equivalential algebras}, are Fregean. The theory of Fregean varieties was
initiated by Pigozzi in~\cite{Pig91} and developed to its mature stage
by Idziak, Słomczyńska, Wroński in~\cite{ISW09}.

Since p-algebras are just short of being a Fregean variety, they
provide a good testing ground for studying slight
generalisations of Fregeanity. We plan to do it later: here
we focus on p-algebras, and even more specifically on free p-algebras.

We record the status of two more universal algebraic properties,
due to Gr\"atzer and Lakser~\cite{GL71},
although they will not be directly relevant for the present article.

\begin{theorem}
All varieties of p-algebras have congruence extension property (CEP). There are
precisely four varieties of p-algebras with amalgamation property (AP), namely,
$\var{Pa}_{-1}$, $\var{Pa}_{0}$, $\var{Pa}_1$ and $\var{Pa}$.   
\end{theorem}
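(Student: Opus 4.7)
The statement contains two independent claims, CEP for every subvariety and AP for exactly four of them, and I would attack them separately. For CEP, I would first note that CEP is inherited by subvarieties, so it suffices to prove it for $\var{Pa}$ itself. The plan is to show that principal p-algebra congruences are \emph{absolute} over subalgebras. Given $\alg{B}\leq\alg{A}$ in $\var{Pa}$ and $c,d\in B$, I would argue that $\Theta^{\alg{A}}(c,d)$ coincides with the distributive-lattice congruence on $\alg{A}$ generated by the three pairs $(c,d),(c^*,d^*),(c^{**},d^{**})$. Inclusion of the lattice congruence in the p-algebra congruence is immediate; for the converse one verifies that the former is already closed under ${}^*$, using $x^{***}=x^*$ together with the De Morgan identity $(x\vee y)^*=x^*\wedge y^*$ applied step by step along the elementary steps of lattice-congruence generation. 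Once this is established, CEP for distributive lattices yields that the relevant lattice congruence restricts correctly to $B^2$, and a routine directed-union argument lifts principal CEP to arbitrary congruences.

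For the amalgamation property, the four positive cases split naturally. The variety $\var{Pa}_{-1}$ amalgamates vacuously; $\var{Pa}_0=\var{BA}$ is the classical AP for Boolean algebras. For the full variety $\var{Pa}$ and for $\var{Pa}_1$, I would work through Priestley-style dualities: the dual categories of ordered topological spaces admit fibered-product constructions corresponding to amalgams, and one checks by hand that the denseness structure required for pseudocomplementation is preserved. For the full variety this is easier because no atom-count restriction has to be respected; for Stone algebras one uses the specific description of the dense part together with the fact that only a chain of length two sits above the Boolean skeleton.

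The failure of AP in $\var{Pa}_k$ with $k\geq 2$ is the main obstacle, as it requires exhibiting an explicit V-formation rather than invoking abstract principles. The plan is to pick $\alg{A}\in\var{Pa}_k$ together with two extensions $\alg{A}\hookrightarrow\alg{B}_1$ and $\alg{A}\hookrightarrow\alg{B}_2$ in $\var{Pa}_k$ that realise two incompatible splittings of some element of $\alg{A}$. Any amalgam $\alg{D}$ inside $\var{Pa}_k$ would have to contain both splittings simultaneously, forcing some subdirectly irreducible quotient of $\alg{D}$ to carry at least $k+1$ pairwise incomparable atoms in its Boolean part. This would falsify the axiom $(\mathbf{ib}_k)$ from Lemma~\ref{lem:width}, contradicting $\alg{D}\in\var{Pa}_k$. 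Constructing the V-formation explicitly is the technical heart of the argument; the Grätzer--Lakser reference~\cite{GL71} provides one, and our Lemma~\ref{lem:width} should streamline the verification.
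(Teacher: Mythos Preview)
The paper does not prove this theorem. It is stated in Section~2.1 with the preamble ``We record the status of two more universal algebraic properties, due to Gr\"atzer and Lakser~\cite{GL71}, although they will not be directly relevant for the present article,'' and no proof is given. So there is nothing in the paper to compare your proposal against; the authors simply cite~\cite{GL71}.

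That said, a brief assessment of your sketch on its own merits. The overall architecture is sound and in fact close to how Gr\"atzer and Lakser proceed: CEP via an explicit description of principal congruences, the positive AP cases handled individually, and the negative cases by explicit V-formations whose amalgam would force too many atoms in a subdirectly irreducible quotient. Your idea of using Lemma~\ref{lem:width} to detect the obstruction is a nice touch.

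The one soft spot is your CEP argument. You claim the p-algebra congruence $\Theta^{\alg{A}}(c,d)$ coincides with the \emph{lattice} congruence generated by $(c,d),(c^*,d^*),(c^{**},d^{**})$, and that closure under ${}^*$ follows ``step by step'' from $x^{***}=x^*$ and $(x\vee y)^*=x^*\wedge y^*$. But lattice-congruence generation also involves meets with arbitrary parameters, and p-algebras do \emph{not} satisfy $(x\wedge y)^*=x^*\vee y^*$; only the weaker $x\wedge(x\wedge y)^*=x\wedge y^*$ is available. So the inductive step for meets is not covered by the tools you name. The claim is still true, but the clean way through is to use the explicit description of principal congruences in distributive lattices (via the interval $[c\wedge d, c\vee d]$) together with the identity $x\wedge(x\wedge y)^*=x\wedge y^*$, rather than a naive step-by-step induction. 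As written, that part of the plan would not go through without this extra ingredient.
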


\subsection{Topological duality}
Priestley~\cite{Pri75} gave a topological duality for p-algebras.
The objects of the dual category are Priestley spaces
$\mathbb{X} = (X,\leq,\mathcal{T})$ satisfying
the condition that for every clopen upset $U$, the set
$\dw{U}^\complement$, where ${}^\complement$ stands for the set complement,
is clopen. We will call these \emph{pp-spaces}.
The morphisms from $\mathbb{X}$ to $\mathbb{Y}$
are continuous order-preserving
maps $f\colon X\to Y$ satisfying the condition $f(\max\up{x}) = \max\up{f(x)}$
for every $x\in X$. We will call them \emph{pp-morphisms}, by analogy
to, and to avoid confusion with, p-morphisms familiar from intuitionistic
and modal logics. We use $\var{PP}$ for the category of pp-spaces with
pp-morphisms.

For a p-algebra $\alg{A}$, we let $\mathcal{F}_p(\alg{A})$ stand for the
poset of prime filters of $\alg{A}$ ordered by inclusion. For $a\in A$, let
$\widehat{a} \deq \{F\in\mathcal{F}_p(\alg{A}): a\in F\}$. Then,
the topology generated by the subbasis
$\{\widehat{a}: a\in A\}\cup\{(\widehat{b})^\complement: b\in A\}$ (with complementation
with respect to $\mathcal{F}_p(\alg{A})$, of course)  
makes $\mathcal{F}_p(\alg{A})$ into a pp-space. 
We write $\delta(\alg{A})$ for that
space, and for any homomorphism $h\colon\alg{A}\to\alg{B}$ we define
$\delta(h)\colon \delta(\alg{B})\to \delta(\alg{A})$ by $\delta(h)(F) = h^{-1}(F)$ for any
$F\in\mathcal{F}_p(\alg{B})$. So defined $\delta(h)$ is a pp-morphism.

Conversely, for a pp-space
$\mathbb{X}$, we let $\mathrm{ClopUp}(\mathbb{X})$ stand for the
set of clopen upsets of $\mathbb{X}$. The algebra
$\varepsilon(\mathbb{X}) = (\mathrm{ClopUp}(\mathbb{X}); \cup,\cap,{}^*, \emptyset,
\mathbb{X})$, with $U^* \deq \dw{U}^\complement$ is a p-algebra.
For a pp-morphism $f\colon\mathbb{X}\to\mathbb{Y}$, we define
$\varepsilon(f)\colon \varepsilon(\mathbb{Y})\to \varepsilon(\mathbb{X})$ by
$\varepsilon(f)(U) \deq \up{f^{-1}(U)}$ for any $U\in\mathrm{ClopUp}(\mathbb{Y})$.
So defined $\varepsilon(f)$ is a homomorphism of p-algebras.

Any p-algebra $\alg{A}$ is isomorphic to $\varepsilon\delta(\alg{A})$ via the map
$a\mapsto \widehat{a}$. Any pp-space $\mathbb{X}$ is isomorphic to
$\delta\varepsilon(\mathbb{X})$ via the map
$x\mapsto \{U\in\mathrm{ClopUp}(\mathbb{X}): x\in U\}$.

\begin{prop}
The categories $\var{Pa}$ (with homomorphisms) and pp-spaces (with pp-morphisms)
are dually equivalent via the contravariant functors
$\delta\colon\var{Pa} \to \var{PP}$ and $\varepsilon\colon\var{PP} \to \var{Pa}$.
\end{prop}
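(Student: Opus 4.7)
The plan is to build on Priestley duality between bounded distributive lattices and Priestley spaces: the only additional content of the present proposition is that pseudocomplementation on the algebraic side corresponds to the operation $\dw{\cdot}^\complement$ on the dual side, and that the extra pp-morphism condition $f(\max\up{x}) = \max\up{f(x)}$ corresponds to preservation of pseudocomplement. Since the two natural isomorphisms $a\mapsto\widehat{a}$ and $x\mapsto\{U : x\in U\}$ are asserted just before the proposition as isomorphisms in $\var{Pa}$ and in $\var{PP}$ respectively (which in particular implicitly verifies that $\delta(\alg{A})$ really is a pp-space, via $\dw{\widehat{a}}^\complement = \widehat{a^*}$, and that $\varepsilon(\mathbb{X})$ really is a p-algebra), the work that remains is essentially functoriality and naturality.

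First I would verify that $\delta$ and $\varepsilon$ act correctly on morphisms. Given a p-algebra homomorphism $h\colon\alg{A}\to\alg{B}$, the map $\delta(h)(F) = h^{-1}(F)$ is continuous and order-preserving by the distributive lattice part of Priestley duality; the content to check is the pp-morphism condition $\delta(h)(\max\up{F}) = \max\up{\delta(h)(F)}$. Via the isomorphism $\widehat{\cdot}$ this reduces to the statement that $h^{-1}(\widehat{a^*}) = h^{-1}(\widehat{a})^*$ for every $a\in A$, which is a restatement of $h$ preserving ${}^*$. Dually, given a pp-morphism $f\colon\mathbb{X}\to\mathbb{Y}$, the map $\varepsilon(f)(U) = \up{f^{-1}(U)}$ coincides with $f^{-1}(U)$ since $f$ is order-preserving and $U$ an upset; preservation of $\cap$, $\cup$, $\emptyset$, $\mathbb{Y}$ is immediate, and preservation of ${}^*$ requires the identity $f^{-1}(\dw{U}^\complement) = \dw{f^{-1}(U)}^\complement$, which is exactly what the pp-morphism condition delivers.

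Preservation of composition and identities for both functors is immediate from the set-theoretic definitions, and naturality of the unit $a\mapsto\widehat{a}$ and counit $x\mapsto\{U : x\in U\}$ unwinds, by the same squares as in standard Priestley duality, to routine computations with $h^{-1}$ and $f^{-1}$. The main obstacle is therefore the morphism translation above: verifying that the point-set condition on $f$ is precisely what is needed to make $\varepsilon(f)$ preserve ${}^*$. For the non-trivial direction I would argue contrapositively: if $\up{f(x)}\cap U\neq\emptyset$, use that $U$ is a clopen upset of a Priestley space to pick a maximal element of $\up{f(x)}$ inside $U$; then the equality $f(\max\up{x}) = \max\up{f(x)}$ produces $y\in\max\up{x}$ with $f(y)\in U$, witnessing $x\notin f^{-1}(U)^*$. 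Together with the easy order-preserving direction, this closes the loop and yields the dual equivalence.
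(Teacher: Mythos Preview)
The paper does not give a proof of this proposition; it is stated as Priestley's result with a citation to~\cite{Pri75}. So there is no ``paper's own proof'' to compare against, and your sketch has to stand on its own. The overall strategy---bootstrap from Priestley duality for bounded distributive lattices and isolate what the extra pp-conditions add---is the right one, and your treatment of $\varepsilon(f)$ is fine.

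There is, however, a genuine gap in the argument for $\delta(h)$. You assert that the pointwise condition $\delta(h)(\max\up{F}) = \max\up{\delta(h)(F)}$ ``reduces via $\widehat{\cdot}$'' to the clopen-upset equation $\delta(h)^{-1}(\widehat{a^*}) = \bigl(\delta(h)^{-1}(\widehat{a})\bigr)^*$. That equation is indeed equivalent to $h(a^*)=h(a)^*$, but the reduction you invoke is precisely the implication ``$f^{-1}$ preserves ${}^*$ on clopen upsets $\Rightarrow$ $f$ satisfies the pp-morphism condition''. This is the \emph{converse} of what you argue in your final paragraph, and it is not automatic: the pp-condition is a statement about individual maximal points, and recovering it from a statement about clopen upsets requires a separation argument you have not supplied.

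The cleanest repair is to verify the pp-morphism condition for $\delta(h)$ directly. One half is easy: a prime filter $G$ is maximal iff $b\in G$ or $b^*\in G$ for every $b$, and this property is clearly inherited by $h^{-1}(G)$ when $h$ preserves ${}^*$. For the other half, given a maximal prime filter $G'\supseteq h^{-1}(F)$ in $\alg{A}$, show that $F\cup h(G')$ generates a proper filter of $\alg{B}$ (if $f\wedge h(a)=0$ with $f\in F$, $a\in G'$, then $f\le h(a)^*=h(a^*)$, so $a^*\in h^{-1}(F)\subseteq G'$, contradicting $a\in G'$), extend to a maximal prime filter $G$, and conclude $h^{-1}(G)=G'$ by maximality of $G'$. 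With this in place the rest of your outline goes through.
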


In a distributive lattice $\alg{L}$, every prime filter determines a unique
completely meet-irreducible congruence, so the universe
of the dual space can be identified with $\cm{\alg{L}}$ with the order given
by inclusion of 1-classes. In Heyting algebras,
this correspondence breaks down: every prime filter determines a unique
meet-irreducible congruence, but that congruence is not necessarily
\emph{completely} meet-irreducible. So, in contrast to distributive lattices,
we cannot in general work with completely meet-irreducible congruences to get a
duality for Heyting algebras. For finite Heyting algebras the correspondence
between prime filters and completely meet-irreducible congruences is restored
simply because complete meet-irreducibility coincides with meet-irreducibility.
We will see that for p-algebras the situation is just like for distributive lattices:
every prime filter determines a unique completely meet-irreducible
congruence. We will use this correspondence to build a representation of
p-algebras, and then obtaining the promised construction of free p-algebras and
normal forms for p-algebra terms.

\begin{example}\label{ex:si-dual} 
The dual $\delta(\Bnalg{n})$ of a subdirectly irreducible algebra $\Bnalg{n}$ is the
poset of height $2$ with one minimal element and $n$ maximal elements.
\begin{center}
\begin{tikzpicture}
\node[dot, label=below:$0$] (v0) at (0,0) {};
\node[dot, label=above:$1$] (v1) at (0,3) {};
\node[dot, label=30:$e$] (v2) at (0,2.5) {};
\draw[thick] (v0).. controls (-1.5,0.5) and (-1.5,2) .. (v2)
(v0) .. controls (1.5,0.5) and (1.5,2) .. (v2) (v1)--(v2);
\node[dot, fill=white, label=above:$a_1$] (v3) at (-0.85,0.5) {};
\node[dot, label=above:$a_2$] (v4) at (-0.4,0.5) {};
\node[dot, label=above:$a_3$] (v5) at (0,0.5) {};
\node[] (v6) at (0.5,0.5) {$\dots$};
\node[dot, fill=white, label=above:$a_n$] (v7) at (0.85,0.5) {};
\draw[thick] (v0)--(v4) (v0)--(v5);
\node[] (v7) at (0,-1) {$\Bnalg{n}$};
\end{tikzpicture}  
\hspace{2cm}
\begin{tikzpicture}
\node[dot, label=below:$\up{1}$] (v0) at (0,0) {};
\node[dot, label=above:$\up{a_1}$] (v1) at (-2,1) {};
\node[dot, label=above:$\up{a_2}$] (v2) at (-1,1) {};
\node[dot, label=above:$\up{a_3}$] (v3) at (0,1) {};
\node[dot, label=above:$\up{a_n}$] (v4) at (2,1) {};
\node[] (v5) at (1,1) {$\dots$};
\path[draw,thick] (v0)--(v1) (v0)--(v2) (v0)--(v3) (v0)--(v4);
\node[] (v6) at (0,-1) {$\delta(\Bnalg{n})$};
\end{tikzpicture}
\end{center}
\end{example}

It is well known that in general there exist 
p-algebras which are not reducts of Heyting algebras, and bounded distributive
lattices which are not reducts of p-algebras. However, in the finite case
the three classes coincide, that is, every finite bounded distributive lattice
is a reduct of a p-algebra which in turn a reduct of a Heyting algebra.

For our purposes, the duality described above is less useful than another form of
representation, whose building blocks are completely meet-irreducible
congruences. Although not a duality, this representation will allow us to
construct free p-algebras in a neater and more systematic way than
in existing constructions by Urquhart in~\cite{Urq73}, by Berman and Dwinger
in~\cite{BD75}, and by Katri\v{n}\'{a}k in~\cite{Kat98}.

\section{Completely meet irreducible congruences}

Let $\alg{A}$ be an arbitrary algebra. The map
$M\colon \mathbf{Con}\,\alg{A} \to \mathrm{Up}(\Cm{\alg{A}})$ 
given for any $\alpha\in\mathbf{Con}\,\alg{A}$ by
$$
M(\alpha) \deq \{\mu\in\cm{\alg{A}}: \alpha\subseteq\mu\}
$$ 
is injective, by Birkhoff Theorem. If $\alg{A}$ is finite and congruence-distributive,
then $M$ is a dual lattice isomorphism. For orderable algebras (not necessarily
finite, or congruence distributive) we have another
useful map. Namely, if $\alg{A}$ is $1$-orderable,
then the map $\HtM\colon \alg{A}\to \mathrm{Up}(\Cm{\alg{A}})$,
given by
$$
\HtM(a)\deq M(\Theta(a,1))
$$
is injective. For such an $\alg{A}$ it is well known that
$\mathrm{Up}(\Cm{\alg{A}})$ carries a natural Heyting algebra structure,
namely $(\mathrm{Up}(\Cm{\alg{A}}); \cup,\cap, \ra, \emptyset,\cm{\alg{A}})$
with $U\ra W\deq \dw{U\setminus W}^\complement$ where ${}^\complement$ stands for the
set complement, is a Heyting algebra. Hence, it is also a p-algebra, with
$U^* = U\ra\emptyset$, or explicitly,
$U^* \deq \dw{U}^\complement$.

From now on, we assume $\alg{A}$ is a p-algebra, and we begin a series of observations
on the the structure of $\Cm{\alg{A}}$. Since any $\mu\in\cm{\alg{A}}$ has
a unique cover in $\Con{\alg{A}}$ we will write $\mu^+$ for that cover.

\begin{lemma}\label{lem:mu-plus}
Let $\alg{A}\in\var{Pa}$ and $\mu\in\cm{\alg{A}}$.  Then,
\begin{enumerate}
\item $1/\mu$ is a prime filter.
\item $1/\mu^+$ is a disjoint union of two classes: $1/\mu$ and $e/\mu$, where
$e$ is any element of $A$ such that $e/\mu$ is the unique subcover of
$1/\mu$ in $\alg{A}/\mu$. 
\item For any $a\notin 1/\mu^+$ we have
$a/\mu^+ = a/\mu = a^{**}/\mu = a^{**}/\mu^+$.
\item $\alg{A}/\mu^+$ is a Boolean algebra.
\end{enumerate}
\end{lemma}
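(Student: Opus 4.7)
The plan is to use the explicit description of subdirectly irreducible p-algebras (Theorem \ref{thm:si-descr}) together with the structural information about monoliths in 1-orderable varieties (Theorem \ref{thm:orderable-si}) and $1$-orderability of $\var{Pa}$ (Lemma \ref{lem:orderability}). Since $\mu$ is completely meet-irreducible, $\alg{A}/\mu$ is subdirectly irreducible, so by Theorem \ref{thm:si-descr} it has the form $\sipalg{B}$ for some Boolean algebra $\alg{B}$ with top element $e/\mu$, with $1/\mu$ sitting strictly above $e/\mu$ as the greatest element.

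For (1), in $\sipalg{B}$ the singleton $\{1/\mu\}$ is clearly an upset; primeness reduces to showing that $x\vee y = 1/\mu$ forces $x = 1/\mu$ or $y = 1/\mu$ in $\sipalg{B}$, which holds because the join of two elements of the Boolean part $B\setminus\{1\}$ stays in that part. Since lattice homomorphisms pull back prime filters to prime filters, $1/\mu$ is a prime filter of $\alg{A}$. For (2), apply Theorem \ref{thm:orderable-si} to $\alg{A}/\mu$ (which is legal by Lemma \ref{lem:orderability}): the monolith of $\alg{A}/\mu$ is precisely $\mu^+/\mu$, and the theorem says that this monolith joins $1/\mu$ with the unique subcover $e/\mu$ into one class and leaves all other classes singletons, which gives the required decomposition.

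For (3), Theorem \ref{thm:orderable-si}(3) immediately yields $a/\mu^+ = a/\mu$ for $a\notin 1/\mu^+$. To show $a^{**}/\mu = a/\mu$, note that in $\sipalg{B}$ the only element $b$ with $b < b^{**}$ is $b = e/\mu$ (as already observed in the proof of Lemma \ref{lem:si-cond}): indeed, every $b\in B$ satisfies $b^{**} = b$ when computed within $\alg{B}$, and the only element on which the p-algebra star differs from the Boolean complement is $e/\mu$, for which $(e/\mu)^* = 0$ and so $(e/\mu)^{**} = 1/\mu$. Since $a/\mu \notin \{1/\mu, e/\mu\}$, we get $(a/\mu)^{**} = a/\mu$, i.e., $a^{**}/\mu = a/\mu$. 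This also shows $a^{**}\notin 1/\mu^+$, so another application of Theorem \ref{thm:orderable-si}(3) gives $a^{**}/\mu^+ = a^{**}/\mu$, completing the chain of equalities.

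Finally, for (4), use the correspondence theorem to identify $\alg{A}/\mu^+$ with $(\alg{A}/\mu)\big/(\mu^+/\mu) \cong \sipalg{B}\big/\alpha$, where $\alpha$ is the congruence on $\sipalg{B}$ that merges $1$ with $e$ and is the identity elsewhere, as given by (2). In $\sipalg{B}\big/\alpha$ the new top class $\{1, e\}/\alpha$ plays the role of the top, and below it we have exactly the elements of the Boolean part $B$ with their original operations; direct inspection using the definition of the star from Theorem \ref{thm:si-descr} shows that the quotient star coincides with the Boolean complement of $\alg{B}$. Hence $\alg{A}/\mu^+ \cong \alg{B}$ is a Boolean algebra. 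The only step requiring any care is checking in (3) that $b^{**} = b$ for every $b\in \sipalg{B}$ other than $e$, but this is essentially built into Theorem \ref{thm:si-descr}; beyond that the argument is a bookkeeping exercise with the isomorphism theorems.
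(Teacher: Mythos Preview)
Your proof is correct and follows essentially the same strategy as the paper: both rely on Theorem~\ref{thm:si-descr} to identify $\alg{A}/\mu$ with some $\sipalg{B}$ and then invoke Theorem~\ref{thm:orderable-si} for parts (2)--(4). The one noteworthy difference is in (1): the paper argues abstractly via congruence distributivity, using $\Theta(a\vee b,1) = \Theta(a,1)\cap\Theta(b,1)$ together with complete meet-irreducibility of $\mu$ to force $\Theta(a,1)\subseteq\mu$ or $\Theta(b,1)\subseteq\mu$, whereas you argue concretely in the quotient $\sipalg{B}$, observing that joins of elements of the Boolean part $B$ cannot reach $1$ and then pulling back along the quotient map. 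Your route is more elementary and tied to the specific structure of p-algebras; the paper's route is the generic congruence-distributive argument and would transfer unchanged to any such variety.
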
  

\begin{proof}
For any $\alpha\in\Con{\alg{A}}$ the class $1/\alpha$ is a filter, so
for (1) we only need to show that $1/\mu$ is prime. Take $a,b\in A$ with
$a\vee b\in 1/\mu$, that is, $(a\vee b, 1)\in\mu$. Since
$\Theta(a\vee b,1) = \Theta(a,1)\cap\Theta(b,1)$, we have
$\Theta(a,1)\cap\Theta(b,1)\leq \mu$, so by complete meet-irreducibility of
$\mu$ (and congruence distributivity) we obtain $\Theta(a,1)\leq\mu$ or 
$\Theta(b,1)\leq\mu$. Hence, $a\in 1/\mu$ or $b\in 1/\mu$ as required.

For (2), first observe that the algebra $\alg{A}/\mu$ is subdirectly
irreducible. Thus $\alg{A}/\mu$ is isomorphic to $\Bnalg{m}$ for some $m\geq
0$. Pick $e\in A$ such that 
$e/\mu$ is the unique subcover of $1/\mu$ in $\alg{A}/\mu$, so that
$e^*/\mu = 0/\mu$. 
The quotient congruence $\mu/\mu^+$ is the monolith of $\alg{A}/\mu$, and so
by Theorem~\ref{thm:orderable-si} it is the identity on $\alg{A}/\mu$ except 
$(1/\mu)/\mu^+ = \{1/\mu, e/\mu\}$.

For (3) and (4), again using the fact that $\alg{A}/\mu\cong \Bnalg{m}$,
we get the only $\mu$-class $a/\mu$
for which $a/\mu \neq a^{**}/\mu$ is $e/\mu$, as $e^{**}/\mu = 1/\mu$.
Since $\mu^+\succ \mu$, we get $\alg{A}/\mu^+\cong\alg{B}_m$.
\end{proof}

The element $e\in A$ in the proof above is in general not unique, but
a suitable $e$ exists for any particular $\mu$, so from now on
we will use $e_\mu$ to name any such element. Using this convention
we will write $1/\mu^+ = 1/\mu\cup e_\mu/\mu$ with no danger of confusion.

\begin{lemma}\label{lem:homomorphism}
Let $\alg{A}\in\var{Pa}$. Then, $\HtM\colon\alg{A}\to
\mathrm{Up}(\Cm{\alg{A}})$ is a homomorphism.
Therefore, $\alg{A}\cong \HtM(\alg{A})\leq\mathrm{Up}(\Cm{\alg{A}})$. 
\end{lemma}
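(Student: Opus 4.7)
My plan is to verify preservation of each operation in the p-algebra signature under $\HtM$---the pseudo-complement clause being the main obstacle---and then deduce injectivity from $1$-orderability. Preservation of the constants and of the lattice operations is routine: since $\mu \in \HtM(a)$ iff $a \in 1/\mu$, we immediately obtain $\HtM(1) = \cm{\alg{A}}$ and $\HtM(0) = \emptyset$, while Lemma~\ref{lem:mu-plus}(1) (primality of $1/\mu$) handles the meet and join clauses.

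The substantive content is preservation of ${}^*$, that is, $\HtM(a^*) = \dw{\HtM(a)}^\complement$. Because $\alg{A}/\mu \cong \sipalg{B}$ and in a subdirectly irreducible p-algebra $a^* = 1$ iff $a = 0$, this identity reduces to the equivalence
$$
a \equiv 0 \pmod{\mu} \iff (\forall \nu \in \cm{\alg{A}})\bigl(\mu \subseteq \nu \Rightarrow a \not\equiv 1 \pmod{\nu}\bigr).
$$
The forward direction is immediate: if $a \equiv 0 \pmod{\mu}$ then $a \equiv 0 \pmod{\nu}$ for every coarser $\nu$, and non-triviality of $\alg{A}/\nu$ rules out $a \equiv 1 \pmod{\nu}$.

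The converse is where the real work lies. Given $a/\mu \neq 0$ in $\sipalg{B}$, I will exhibit a p-algebra homomorphism $h\colon \sipalg{B} \to \alg{2}$ with $h(a/\mu) = 1$; pulling back its kernel along $\alg{A} \twoheadrightarrow \alg{A}/\mu$ then yields the desired $\nu$, which is automatically completely meet-irreducible because $\alg{2}$ is subdirectly irreducible. To construct $h$, apply the Boolean prime ideal theorem to the Boolean part $\alg{B}$: when $\alg{B}$ is nontrivial, extend $\{a/\mu\}$ (or any nonzero element, if $a/\mu = 1$) to an ultrafilter $U$ of $\alg{B}$ and set $F = \{1\} \cup U$; when $\alg{B}$ is trivial, $\sipalg{B} = \alg{2}$ and we may take $F = \{1\}$. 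The characteristic map $h = \chi_F$ is a lattice homomorphism to $\alg{2}$ by primality of $F$, and preservation of ${}^*$ reduces to the ultrafilter identity $x \in U \iff \neg x \notin U$ in $\alg{B}$.

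Finally, injectivity of $\HtM$ follows at once from Birkhoff's theorem together with Lemma~\ref{lem:orderability}: $\HtM(a) = \HtM(b)$ gives $\Theta(a,1) = \Theta(b,1)$, and $1$-orderability forces $a = b$.
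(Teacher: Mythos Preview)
Your proof is correct. The treatment of the constants and lattice operations matches the paper's, and your reduction of the ${}^*$-clause to the equivalence
\[
a \equiv 0 \pmod{\mu} \iff \bigl(\forall \nu \in \cm{\alg{A}}\bigr)\bigl(\mu \subseteq \nu \Rightarrow a \not\equiv 1 \pmod{\nu}\bigr)
\]
is sound, as is the ultrafilter construction of the witnessing $\nu$ in $\sipalg{B}$ and its pullback to $\alg{A}$.

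The route for the converse direction of ${}^*$-preservation, however, is genuinely different from the paper's. You argue by contrapositive and work inside the quotient $\sipalg{B}$: given $a/\mu \neq 0$, you invoke the Boolean prime ideal theorem on the Boolean part $\alg{B}$ to build a prime filter $F = \{1\}\cup U$ of $\sipalg{B}$ containing $a/\mu$, check by hand that $\chi_F$ preserves ${}^*$, and pull back. The paper instead stays on the congruence side: it uses Lemma~\ref{lem:mu-plus} to observe that every $\varphi\in M(\mu^+)$ satisfies $\alg{A}/\varphi\cong\alg{2}$, deduces $(a^*,1)\in\mu^+$, and then exploits the splitting $1/\mu^+ = 1/\mu\cup e_\mu/\mu$ together with regularity of $a^*$ (i.e., $a^{***}=a^*$) to rule out $a^*\in e_\mu/\mu$. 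Your argument is more self-contained and makes the witness $\nu$ explicit; the paper's argument avoids redoing the ultrafilter verification and instead reuses the structural description of $\mu^+$ already established, with the regularity trick reappearing later (e.g.\ in Lemma~\ref{lem:tech-for-qid}). Both approaches ultimately rest on some form of choice---yours via BPI on $\alg{B}$, the paper's via Birkhoff's theorem applied to $\mu^+$.
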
  

\begin{proof}
Take any $a,b\in A$, and any $\mu\in\cm{\alg{A}}$.
It is immediate that $(a\wedge b)/\mu = 1/\mu$ if and only if
$a/\mu = 1/\mu$ and $b/\mu = 1/\mu$, and by Lemma~\ref{lem:mu-plus}
we also have that $(a\vee b)/\mu = 1/\mu$ if and only if
$a/\mu = 1/\mu$ or $b/\mu = 1/\mu$.

We will now show that $\HtM(a^*) = \dw{\HtM(a)}^\complement$.
For the left-to-right inclusion, take any $\mu\in\cm{\alg{A}}$ such that
$(a^*,1)\in \mu$. Suppose 
$\mu\in\dw{\HtM(a)}$, so that $\mu\leq\varphi$ for some
$\varphi\in\HtM(a)$. Then $(a,1)\in\varphi$ and $(a^*,1)\in \varphi$, and thus
$(0,1)\in\varphi$ so $\varphi$ is the full congruence, contradicting
$\varphi\in\cm{\alg{A}}$. 

For the converse, take $\mu\in \dw{\HtM(a)}^\complement$. First we show that
$(a^*,1)\in \mu^+$. For if $(a^*,1)\notin \mu^+$, then
for some $\varphi\in M(\mu^+)$ we have $(a^*,1)\notin\varphi$. 
But $\alg{A}/\varphi\cong\mathbf{2}$ and hence
$(a,1)\in\varphi$, so we get
$\mu<\mu^+\leq \varphi\in \HtM(a)$, contradicting  
$\mu\notin \dw{\HtM(a)}$. Therefore $(a^*,1)\in \mu^+$.
As $1/\mu^+ = 1/\mu\cup e_\mu/\mu$, we have
two cases to consider. If $a^*\in e_\mu/\mu$, then
$a^*/\mu = e_\mu^{**}/\mu = 1/\mu$ implying $(e_\mu,1)\in\mu$ which is a
contradiction. Hence, $a^*\in 1/\mu$, that is,
$\mu\in\HtM(a^*)$ as required.

Injectivity of $\HtM$, required for the second statement,
is an immediate consequence of $1$-orderability.
\end{proof}

For any finite Heyting algebra $\alg{A}$
the map $\HtM$ gives an order-isomorphism
$\HtM(\alg{A})\cong\mathrm{Up}(\Cm{\alg{A}})$.
In finite p-algebras, we have nearly the same: the order is preserved but not
reflected. Our next goal is to repair this by finding a suitable
ordering on $\cm{\alg{A}}$. 
It will be crucial for the description of free algebras to come. 

Immediately by the description of subdirectly irreducibles we have that 
$\Cm{\alg{A}}$ has two `storeys': one comprising all congruences whose quotients
are isomorphic to $\Bnalg{0}$ (that is, $\alg{2}$), and the other comprising the rest.

\begin{defn}\label{def:storeys}
Let $\alg{A}\in\var{Pa}$. Put
\begin{align*}
\mathrm{I}_\alg{A}
&\deq
\{\mu\in \cm{\alg{A}}: \alg{A}/\mu\cong \Bnalg{0}\}
= \{\mu\in \cm{\alg{A}}: \mu^+ = \mathbf{1}^\alg{A}\}, \\
\mathrm{II}_\alg{A}
&\deq
\{\mu\in \cm{\alg{A}}: \alg{A}/\mu\cong \Bnalg{n} 
\text{ for } n>0\}
= \{\mu\in \cm{\alg{A}}: M(\mu^+)\subseteq \mathrm{I}_\alg{A}\}.
\end{align*}
\end{defn}
Immediately, we conclude that
$\cm{\alg{A}} = \mathrm{I}_\alg{A} \cup \mathrm{II}_\alg{A}$.
Moreover, $\mathrm{I}_\alg{A}$ is precisely the set of all maximal members of
$\Cm{\alg{A}}$ and $\mathrm{II}_\alg{A}$ is the set of
all non-maximal members of $\Cm{\alg{A}}$, which 
coincides with the set of all minimal but not maximal members of
$\Cm{\alg{A}}$.

\begin{lemma}\label{lem:I-is-regular}
Let $\alg{A}\in\var{Pa}$, let $\varphi\in\mathbf{Con}\,\alg{A}$.
If $M(\varphi)\subseteq \mathrm{I}_\alg{A}$, then
for any $a,b\in A$ we have
\begin{align*}
  (a,b) \in \varphi &\iff \bigl((a^*\wedge b^*)\vee(a\wedge b), 1\bigr)\in\varphi\\
                    &\iff \bigl((a\vee b)\wedge (a\wedge b)^*, 0\bigr)\in\varphi.
\end{align*}                      
\end{lemma}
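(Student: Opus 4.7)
The plan is to reduce the claim to a routine Boolean-algebra calculation in the quotient $\alg{A}/\varphi$. First I would dispatch the degenerate case: if $\varphi$ is the total congruence, then $M(\varphi) = \emptyset$ and both sides of every displayed equivalence hold trivially, so we may assume $M(\varphi) \neq \emptyset$.

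Next I would invoke Birkhoff's theorem to write $\varphi = \bigcap M(\varphi)$, obtaining a subdirect embedding $\alg{A}/\varphi \hookrightarrow \prod_{\mu \in M(\varphi)} \alg{A}/\mu$. The hypothesis $M(\varphi) \subseteq \mathrm{I}_\alg{A}$ forces each factor to satisfy $\alg{A}/\mu \cong \Bnalg{0} = \alg{2}$, so $\alg{A}/\varphi$ embeds into a power of $\alg{2}$. Since Boolean algebras form a subvariety of $\var{Pa}$ (namely $\var{Pa}_0$), it follows that $\alg{A}/\varphi$ is itself a Boolean algebra; in particular $a^*/\varphi$ is the Boolean complement of $a/\varphi$ for every $a \in A$.

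Writing $\bar{x} = x/\varphi$, the three displayed statements then become
$$
\bar{a} = \bar{b}, \qquad (\bar{a}^* \wedge \bar{b}^*) \vee (\bar{a} \wedge \bar{b}) = 1, \qquad (\bar{a} \vee \bar{b}) \wedge (\bar{a} \wedge \bar{b})^* = 0
$$
inside the Boolean algebra $\alg{A}/\varphi$. These are, respectively, equality, the biconditional $\bar{a} \leftrightarrow \bar{b}$ being $1$, and the symmetric difference $\bar{a} \oplus \bar{b}$ being $0$. Their pairwise equivalence is a classical Boolean identity, and pulling back through the quotient map yields the three equivalences of the lemma.

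The only real obstacle is the second step, namely confirming that $M(\varphi) \subseteq \mathrm{I}_\alg{A}$ forces $\alg{A}/\varphi$ to be Boolean with $^*$ coinciding with Boolean complement. This follows cleanly from Theorem~\ref{thm:si-descr} specialised to $\Bnalg{0}$ together with the fact that varieties are closed under subalgebras, so no serious difficulty arises.
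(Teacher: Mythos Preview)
Your proposal is correct and follows essentially the same route as the paper: both use Birkhoff's representation $\varphi = \bigcap M(\varphi)$ together with the hypothesis that every $\alg{A}/\mu$ with $\mu\in M(\varphi)$ is $\alg{2}$, then reduce the equivalences to the standard Boolean identities relating equality, biconditional, and symmetric difference. The only cosmetic difference is that you pass once to the Boolean quotient $\alg{A}/\varphi$ and argue there, whereas the paper checks each $\mu_i$ componentwise and intersects; these are the same argument in different packaging.
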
  

\begin{proof}
Let $M(\varphi) = \{\mu_i: i\in I\}$. Then $\alg{A}/{\mu_i}\cong\alg{2}$ for
every $i\in I$. Since $\varphi = \bigcap_{i\in I}\mu_i$, we have
\begin{align*}
(a,b)\in \varphi
&\iff \forall i\in I: (a,b)\in \mu_i\\ 
&\iff \forall i\in I: \bigl((a^*\wedge b^*)\vee(a\wedge b), 1\bigr)\in \mu_i
\iff \bigl((a^*\wedge b^*)\vee(a\wedge b), 1\bigr)\in\varphi\\
&\iff \forall i\in I: \bigl((a\vee b)\wedge(a\wedge b)^*, 0\bigr)\in \mu_i
\iff \bigl((a\vee b)\wedge(a\wedge b)^*, 0\bigr)\in\varphi 
\end{align*}
where the second and third line are equivalent to one another
by the usual De~Morgan laws for $\alg{2}$. 
\end{proof}
On the few occasions we need it, we will write $a\cdot b$ as a shorthand for
$(a^*\wedge b^*)\vee(a\wedge b)$, and
$a-b$ as a shorthand for $(a\vee b)\wedge(a\wedge b)^*$. 
Note that $a\cdot b$ interpreted in a Boolean
algebra is the Boolean equivalence, and $a-b$ is the symmetric difference
(or addition mod 2). Neither the Heyting equivalence, nor its dual are
term-definable in p-algebras. 

As we saw, $\var{Pa}$ is not 1-regular, but 
Theorem~\ref{thm:cm-regular} will show that $1$-regularity holds for completely
meet-irreducible congruences, as in the case of distributive lattices.
Vestiges of $1$-regularity for all congruences are
summarised below, as a corollary of Lemma~\ref{lem:I-is-regular}.

\begin{cor}\label{cor:weak-1-reg}
Let $\alg{A}\in\var{Pa}$, let $\varphi,\psi\in\mathbf{Con}\,\alg{A}$.
If $M(\varphi)\subseteq \mathrm{I}_\alg{A}$ and $M(\psi)\subseteq
\mathrm{I}_\alg{A}$, then $1/\varphi = 1/\psi$ implies $\varphi = \psi$.
\end{cor}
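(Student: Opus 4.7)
The plan is to use Lemma~\ref{lem:I-is-regular} to reduce the question of which pairs lie in a congruence to the question of which elements lie in its $1$-class. Once this reduction is in place, the corollary is immediate.

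More concretely, I would argue as follows. Fix $\varphi,\psi\in\Con{\alg{A}}$ with $M(\varphi), M(\psi)\subseteq \mathrm{I}_\alg{A}$ and assume $1/\varphi = 1/\psi$. For arbitrary $a,b\in A$, apply Lemma~\ref{lem:I-is-regular} to $\varphi$ to obtain
$$
(a,b)\in\varphi \iff \bigl((a^*\wedge b^*)\vee(a\wedge b),\,1\bigr)\in\varphi,
$$
and likewise for $\psi$. The right-hand conditions can be rewritten as membership of the element $(a^*\wedge b^*)\vee(a\wedge b)$ in $1/\varphi$ and $1/\psi$ respectively. Since $1/\varphi = 1/\psi$ by assumption, these two memberships are equivalent, whence $(a,b)\in\varphi \iff (a,b)\in\psi$. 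Since $a,b$ were arbitrary, $\varphi = \psi$.

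There is essentially no obstacle: the whole content is packaged inside Lemma~\ref{lem:I-is-regular}, which provides the explicit $1$-term $a\cdot b$ witnessing the pair $(a,b)$. The only thing worth stressing in the write-up is the symmetry between the roles of $\varphi$ and $\psi$, ensuring that the reduction applies on both sides simultaneously.
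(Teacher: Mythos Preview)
Your argument is correct and is precisely the intended one: the paper states this result as an immediate corollary of Lemma~\ref{lem:I-is-regular} without further proof, and your reduction via the element $a\cdot b = (a^*\wedge b^*)\vee(a\wedge b)$ is exactly how that lemma is meant to be applied.
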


\begin{theorem}\label{thm:cm-regular}
Let $\alg{A}\in\var{Pa}$ and let $\mu,\nu\in\cm{\alg{A}}$. 
Then, $1/\mu = 1/\nu$ implies $\mu = \nu$.
\end{theorem}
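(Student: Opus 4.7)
The plan is to reduce to Corollary~\ref{cor:weak-1-reg}, first by a case split on whether $\mu,\nu$ lie in $\mathrm{I}_\alg{A}$ or $\mathrm{II}_\alg{A}$, and then — in the harder case — by passing to the covers $\mu^+,\nu^+$ and reconstructing $\mu$ and $\nu$ from them together with the common 1-class.

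First I would dispose of the mixed case. If $\mu \in \mathrm{I}_\alg{A}$ and $\nu \in \mathrm{II}_\alg{A}$, pick any $a \in e_\nu/\nu$. Then $a \notin 1/\nu$, yet $a^{**} \in 1/\nu$, because $e_\nu^{**}/\nu = 1/\nu$ in $\Bnalg{n}$. The assumption $1/\mu = 1/\nu$ gives $a^{**} \in 1/\mu$; but in the two-element quotient $\alg{A}/\mu \cong \alg{2}$ this forces $a \in 1/\mu$, a contradiction. So either both $\mu,\nu \in \mathrm{I}_\alg{A}$, and Corollary~\ref{cor:weak-1-reg} applied to $M(\mu) = \{\mu\}$ and $M(\nu) = \{\nu\}$ already gives $\mu = \nu$, or both lie in $\mathrm{II}_\alg{A}$.

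In the latter case the key identity is
$$
1/\mu^+ = \{a \in A : a^{**} \in 1/\mu\}.
$$
This follows from Lemma~\ref{lem:mu-plus}(2)--(3) together with the observation that in $\Bnalg{m}$ one has $b^{**} = 1$ iff $b \in \{1,e\}$. The same identity holds for $\nu$, so $1/\mu = 1/\nu$ forces $1/\mu^+ = 1/\nu^+$. Since $M(\mu^+), M(\nu^+) \subseteq \mathrm{I}_\alg{A}$ by Lemma~\ref{lem:mu-plus}(4), Corollary~\ref{cor:weak-1-reg} now yields $\mu^+ = \nu^+$; write $\theta$ for this common cover.

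To finish, I would observe that $\mu$ is uniquely determined by $\theta$ together with $1/\mu$. Indeed, the quotient map $\alg{A}/\mu \to \alg{A}/\theta$ identifies $\Bnalg{m}$ with $\alg{B}_m$ by collapsing the top pair $\{1,e\}$, so the $\mu$-classes outside $1/\theta$ coincide with $\theta$-classes by Lemma~\ref{lem:mu-plus}(3), while the top $\theta$-class $1/\theta = 1/\mu^+$ splits into $1/\mu$ and $e_\mu/\mu = 1/\theta \setminus 1/\mu$. The same description, with the same $\theta$ and the same 1-class, applies to $\nu$, so the two induced partitions of $A$ coincide and $\mu = \nu$. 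I expect the explicit description of $1/\mu^+$ to be the most delicate step, though it is really just unpacking how ${}^{**}$ acts on the subdirectly irreducible quotient $\Bnalg{m}$.
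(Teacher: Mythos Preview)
Your proof is correct and follows the same three-case structure as the paper's; the mixed case and the reduction to $\mu^+=\nu^+$ via Corollary~\ref{cor:weak-1-reg} are essentially identical. Your packaging of the key step as the identity $1/\mu^+ = \{a: a^{**}\in 1/\mu\}$ and your partition-based finish are slightly slicker than the paper's element-chasing and proof by contradiction, but the underlying argument is the same.
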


\begin{proof}
Assume $1/\mu = 1/\nu$. We have three cases to consider.

(1) $\mu,\nu\in \mathrm{I}_\alg{A}$. Then, $\alg{A}/\mu\cong
\Bnalg{0}\cong\mathbf{2}\cong\alg{A}/\nu$ 
and so $A = 1/\mu\cup 0/\mu = 1/\nu\cup 0/\nu$.
By assumption, $1/\mu = 1/\nu$, so
$0/\mu = 0/\nu$ as well; hence $\mu = \nu$.

(2) $\mu\in \mathrm{I}_\alg{A}$, $\nu\in \mathrm{II}_\alg{A}$.
Then $\alg{A}/\mu\cong \Bnalg{0}\cong\mathbf{2}$ and
$\alg{A}/\nu\cong \Bnalg{s}$ for some $s > 1$. Consider
$e_\nu\in A$; recall that by our convention $e_\nu/\nu$ is the unique subcover
of $1/\nu$. Hence, $1/\nu = (e_\nu/\nu)^{**} = e_\nu^{**}/\nu$,
that is, $(e_\nu^{**},1)\in \nu$, so by
assumption $(e_\nu^{**},1)\in\mu$. Since $A = 1/\mu\cup 0/\mu$, we get that if
$e_\nu\in 0/\mu$ then $e_\nu^{**} \in 0/\mu$ contradicting nontriviality of
$\alg{A}/\mu$. So, $e_\nu\in 1/\mu$, but then $e_\nu\in 1/\nu$, contradicting
$e_\nu/\nu\prec 1/\nu$. It follows that this case cannot occur.

(3) $\mu,\nu\in \mathrm{II}_\alg{A}$. Thus,
$\alg{A}/\nu\cong \Bnalg{s}$ and $\alg{A}/\nu\cong \Bnalg{r}$
for some $s,r > 1$. We claim that $1/\nu^+ = 1/\mu^+$.
Pick any $a\in 1/\nu^+$ and recall that $1/\nu^+ = 1/\nu\cup e_\nu/\nu$.
If $a\in 1/\nu$, then by assumption $a\in 1/\mu$ and so
$a\in 1/\mu^+$. If $a\in e_\nu/\nu$, then $a^{**}/\nu = 1/\nu$ and so by
assumption $a^{**}\in 1/\mu$. By Lemma~\ref{lem:I-is-regular} we get
$a\in 1/\mu^+$ showing $1/\nu^+\subseteq 1/\mu^+$.
The converse inclusion follows by symmetry,
proving the claim.

By Corollary~\ref{cor:weak-1-reg} we conclude $\nu^+ = \mu^+$.
Now let $(a,b)\in\mu$ and to get a contradiction suppose $(a,b)\notin\nu$.
So we have $(a,b)\in\nu^+\setminus\nu$, and this implies
$\{a/\nu, b/\nu\} = \{1/\nu, e_\nu/\nu\}$. Without loss of generality,
suppose $a/\nu = 1/\nu$, $b/\nu = e_\nu/\nu$. As $1/\nu = 1/\mu$ by assumption,
we have $(a,1)\in\mu$ and since $(a,b)\in\mu$ we get $(b,1)\in\mu$.
This in turn implies $(b,1)\in\nu$ and 
we obtain $(1,e_\nu)\in\nu$, that is, $e_\nu\in 1/\nu$,
yielding the desired contradiction. 
\end{proof}

Let $\Phi\colon \Cm{\alg{A}}\to \mathcal{F}_p(\alg{A})$ be the map given by
$\Phi(\mu)\deq 1/\mu$. By Lemma~\ref{lem:mu-plus}, the class
$1/\mu$ is a prime filter, so $\Phi$ is well defined.

\begin{lemma}\label{lem:I-image}
Consider the image $\Phi(\mathrm{I}_\alg{A})$. We have
$$
\Phi(\mathrm{I}_\alg{A}) = \{F\in \mathcal{F}_p(\alg{A}): 
\forall a\in A: a^{**}\in F\implies a\in F\}.
$$
\end{lemma}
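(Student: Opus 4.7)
The plan is to prove the two inclusions directly, with the forward direction essentially a restatement of the structure of $\mathrm{I}_\alg{A}$. If $\mu \in \mathrm{I}_\alg{A}$ then $\alg{A}/\mu \cong \Bnalg{0} = \alg{2}$, a Boolean algebra in which pseudocomplementation is an involution, so $a^{**}/\mu = a/\mu$ for every $a \in A$. Consequently $a^{**} \in 1/\mu$ iff $a \in 1/\mu$, which says exactly that $F = \Phi(\mu) = 1/\mu$ satisfies the condition on the right-hand side.

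For the reverse inclusion, given a prime filter $F$ satisfying the hypothesis, I will build a suitable $\mu$ by verifying that the characteristic map $\chi_F \colon \alg{A} \to \alg{2}$, sending $a \in F$ to $1$ and $a \notin F$ to $0$, is a p-algebra homomorphism. Preservation of $0, 1, \wedge, \vee$ is immediate from $F$ being a prime filter of the bounded distributive lattice reduct. Once homomorphy is established, $\mu_F \deq \ker \chi_F$ yields $\alg{A}/\mu_F \cong \alg{2}$; this quotient is simple, so $\mu_F$ is a coatom of $\mathbf{Con}\,\alg{A}$ with unique cover $\mathbf{1}^{\alg{A}}$, placing $\mu_F$ in $\cm{\alg{A}}$ and in particular in $\mathrm{I}_\alg{A}$, and giving $\Phi(\mu_F) = 1/\mu_F = F$.

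The main obstacle is showing that $\chi_F$ preserves ${}^*$ on elements outside $F$: namely, that $a \notin F$ forces $a^* \in F$. (When $a \in F$ one simply notes $a \wedge a^* = 0 \notin F$, so $a^* \notin F$.) This is where the hypothesis on $F$ does its work, via a short computation using the De~Morgan identity $(x \vee y)^* = x^* \wedge y^*$ valid in every p-algebra. By the contrapositive of the hypothesis, $a \notin F$ gives $a^{**} \notin F$. Set $d \deq a^{**} \vee a^*$; since $a^{***} = a^*$ and $a^{**} \wedge a^{***} = 0$, we obtain $d^* = a^{***} \wedge a^{**} = 0$, hence $d^{**} = 0^* = 1 \in F$. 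Applying the hypothesis on $F$ to $d$ yields $d = a^{**} \vee a^* \in F$; primeness of $F$ together with $a^{**} \notin F$ then forces $a^* \in F$, which closes the argument.
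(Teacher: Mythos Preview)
Your proof is correct and follows essentially the same approach as the paper: both directions are argued identically in spirit, constructing the characteristic map $\chi_F$ (the paper calls it $h_F$) and checking that it preserves ${}^*$ by showing that exactly one of $a$, $a^*$ lies in $F$. The only cosmetic difference is that the paper applies the hypothesis directly to $a\vee a^*$ (since $(a\vee a^*)^{**}=1$), obtaining $a\vee a^*\in F$ and then using primeness, whereas you first pass to $a^{**}$ via the contrapositive and work with $a^{**}\vee a^*$ instead; this extra step is harmless but unnecessary.
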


\begin{proof}
For the left-to-right inclusion, let $\mu\in\mathrm{I}_\alg{A}$,
$a\in A$ and $a^{**}\in 1/\mu$. Then $\alg{A}/\mu\cong\alg{2}$ and so
$1/\mu = a^{**}/\mu = (a/\mu)^{**} = a/\mu$; hence $a\in 1/\mu$.

For the
converse, take $F\in\mathcal{F}_p(\alg{A})$ such that
$a^{**}\in F$ implies $a\in F$ for any $a\in A$. The map
$h_F\colon \alg{A}\to\alg{2}$ given by $h_F(a) = 1$ if $a\in F$ and
$h_F(a) = 0$ otherwise, is a homomorphism of bounded distributive lattices. 
Pick any $a\in A$; since $(a\vee a^*)^{**} = 1$ for any $a\in A$, we have
$a\vee a^*\in F$. Since $F$ is prime, $a\in F$ or $a^*\in F$. In either case,
$h_F(a^*) = h_F(a)^*$ and so $h_F$ is a homomorphism of p-algebras.
Thus, $\alg{A}/\ker(h_F)\cong\alg{2}$ and so $\ker(h_F)\in\mathrm{I}_\alg{A}$
and $\Phi(\ker(h_F)) = F$.
\end{proof}

\begin{theorem}\label{thm:cm-into-fp}
Let $\alg{A}\in\var{Pa}$, and let
$\Phi\colon \Cm{\alg{A}}\to \mathcal{F}_p(\alg{A})$ be given by
$\Phi(\mu) = 1/\mu$. Then $\Phi$ is an order-preserving bijection, but not an
order-isomorphism. 
\end{theorem}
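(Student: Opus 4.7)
The plan is to verify well-definedness, injectivity, and order-preservation quickly, then invest real effort into surjectivity, closing with a small counterexample to the order-reflection claim.

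Well-definedness is Lemma~\ref{lem:mu-plus}(1), injectivity is Theorem~\ref{thm:cm-regular}, and order-preservation is immediate: $\mu\subseteq\nu$ entails $(a,1)\in\mu\Rightarrow(a,1)\in\nu$, hence $1/\mu\subseteq 1/\nu$.

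For surjectivity, fix a prime filter $F$. If every $a\in A$ with $a^{**}\in F$ already satisfies $a\in F$, then Lemma~\ref{lem:I-image} produces $\mu\in\mathrm{I}_\alg{A}$ with $\Phi(\mu)=F$. Otherwise fix a witness $a_0$ with $a_0^{**}\in F$ and $a_0\notin F$, and apply Zorn's lemma to $\mathcal{C}=\{\theta\in\Con{\alg{A}}: 1/\theta\subseteq F\}$ (closed under directed joins because $\Con{\alg{A}}$ is algebraic) to select a maximal $\mu\in\mathcal{C}$. By Lemma~\ref{lem:orderability}, the principal congruence $\Theta(a,1)$ has 1-class $\up{a}$; for $a\in F$ this sits inside $F$, so $\Theta(a,1)\in\mathcal{C}$ and, by maximality, $\Theta(a,1)\subseteq\mu$. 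Thus $a\in 1/\mu$ for every $a\in F$, and $1/\mu=F$ follows. Meet-irreducibility of $\mu$ then drops out of primality of $F$: if $\mu=\theta_1\cap\theta_2$ with $\theta_i\supsetneq\mu$, pick $b_i\in 1/\theta_i\setminus F$ (available by maximality), and observe that $(b_1\vee b_2,1)\in\theta_1\cap\theta_2=\mu$, forcing $b_1\vee b_2\in F$ and violating primality.

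The main obstacle is promoting meet-irreducibility to complete meet-irreducibility, i.e., identifying $\alg{A}/\mu$ as an algebra of the form $\sipalg{B}$ from Theorem~\ref{thm:si-descr}. The witness $a_0$ supplies a nontrivial dense class $[a_0]<[1]$ in $\alg{A}/\mu$, and a further primality argument shows at most one such dense subcover can exist: if $[e_1]$ and $[e_2]$ were two distinct dense subcovers of $[1]$, then $[e_1\vee e_2]=[1]$ gives $e_1\vee e_2\in F$, forcing $e_1\in F$ or $e_2\in F$, contradicting $[e_i]<[1]$. Combined with Theorem~\ref{thm:orderable-si} and an analysis of the regular elements of $\alg{A}/\mu$, this identifies $\alg{A}/\mu\cong\sipalg{B}$ for some Boolean $\alg{B}$, yielding CM-irreducibility of $\mu$.

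Finally, for the failure of order-reflection, I would exhibit the five-element Stone algebra $\alg{A}$ on the chain $0<a<b<c<1$ with $a^*=b^*=c^*=0$. It has exactly four completely meet-irreducible congruences, with 1-classes the four prime filters $\{1\}\subsetneq\{c,1\}\subsetneq\{b,c,1\}\subsetneq\{a,b,c,1\}$; the first three of these induce genuinely different partitions of the chain (collapsing respectively $\{a,b,c\}$, $\{a,b\}$ together with $\{c,1\}$, and $\{b,c,1\}$), and so are pairwise incomparable in $\Cm{\alg{A}}$ even though their $\Phi$-images form a chain in $\mathcal{F}_p(\alg{A})$.
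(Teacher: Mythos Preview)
Your surjectivity argument contains a genuine error at the Zorn step. Maximality of $\mu$ in $\mathcal{C}=\{\theta:1/\theta\subseteq F\}$ does \emph{not} force $\Theta(a,1)\subseteq\mu$ for $a\in F$; that would require $\mu\vee\Theta(a,1)\in\mathcal{C}$, which you have not established. Concretely, take $\alg{A}=\alg{C}_4$ (the four-element chain $0<a<b<1$ from Example~\ref{ex:not-iso}) and $F=\{b,1\}$. Here $\mathcal{C}=\{\Delta,\Theta(a,b),\Theta(b,1)\}$, and $\Theta(a,b)$ is maximal in $\mathcal{C}$ with $1/\Theta(a,b)=\{1\}\subsetneq F$; the element $b\in F$ has $\Theta(b,1)\not\subseteq\Theta(a,b)$. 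The fix is easy---first observe that $\theta_F:=\bigvee_{a\in F}\Theta(a,1)$ satisfies $1/\theta_F=F$ (using $\Theta(a,1)\vee\Theta(b,1)=\Theta(a\wedge b,1)$ and Lemma~\ref{lem:orderability}), then apply Zorn above $\theta_F$---but as written the step fails.

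The more serious gap is the passage from meet-irreducibility to \emph{complete} meet-irreducibility. Your invocation of Theorem~\ref{thm:orderable-si} is circular: that theorem already assumes $\mu\in\cm{\alg{A}}$. The phrase ``an analysis of the regular elements'' hides the entire content of the argument. What is actually needed is to show that the dense filter of $\alg{A}/\mu$ has exactly two elements (so that $\alg{A}/\mu\setminus\{[1]\}$ has a top and becomes Boolean under $*$). This can be done from maximality of $\mu$: if $[d_1]>[d_2]$ are two dense non-top classes, the \emph{lattice} congruence $\Theta([d_1],[d_2])$ turns out to respect $*$ (since $[x]\wedge[d_2]=[y]\wedge[d_2]$ with $[d_2]^{**}=[1]$ forces $[x]^{**}=[y]^{**}$), has trivial $1$-class by primality of $F$, and thus contradicts maximality. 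But none of this is in your sketch. The paper sidesteps this difficulty entirely by an explicit construction: it builds $\overline{F}=\{a:a^{**}\in F\}$, realises $\overline{F}$ as the $1$-class of a Boolean-quotient congruence $\alpha$, and then manually splits $1/\alpha$ into $F$ and $\overline{F}\setminus F$ to obtain a congruence whose quotient is visibly $\sipalg{\alg{A}/\alpha}$.

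Your closing counterexample (the five-element chain) is correct, though the paper makes the same point with the smaller $\alg{C}_4$.
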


\begin{proof}
For any $\mu\in \Cm{\alg{A}}$ the class $1/\mu$ is a prime filter by 
Lemma~\ref{lem:mu-plus}, so $\Phi$ is well defined, and trivially
order-preserving. By Theorem~\ref{thm:cm-regular} we get that
$\Phi$ is injective. It remains to show surjectivity. 

Take $F\in \mathcal{F}_p(\alg{A})$. By Lemma~\ref{lem:I-image} we can assume
there is some $a\in A$ with $a^{**}\in F$ but $a\notin F$.
Define $\overline{F}\deq\{a\in A: a^{**}\in F\}$. It is easily shown that
$\overline{F}$ is a proper filter, strictly containing $F$. Note that
$b\vee b^*\in \overline{F}$ for any $b\in A$, since
$(b\vee b^*)^{**} = (b^*\wedge b^{**})^* = 1$. For any $a\notin\overline{F}$  
let $G_a$ be a prime filter extending $\overline{F}$ with $a\notin G_a$.
Then $G_a$ satisfies the condition from Lemma~\ref{lem:I-image}, for
if $b^{**}\in G_a$, then as $b\vee b^*\in \overline{F}$, we get
$b = b^{**}\wedge(b\vee b^*) \in G_a$.  Hence $G_a = \Phi(\mu_a)$ for some
$\mu_a\in\mathrm{I}_\alg{A}$, and therefore
$\overline{F} = \bigcap\{G_a: a\notin \overline{F}\}$. 
Let $\alpha\deq\bigcap\{\mu_a: a\notin\overline{F}\}$. Then, obviously,
$\alg{A}/\alpha\leq \prod_{a\notin\overline{F}} \alg{A}/\mu_a$, and since
$\alg{A}/\mu_a\cong\alg{2}$ we obtain that $\alg{A}/\alpha$ is a Boolean
algebra. Moreover, $1/\alpha = \overline{F}$, for we have
$$
  b\in 1/\alpha \iff (b,1)\in\alpha
                  \iff (b,1)\in\bigcap_{a\notin\overline{F}}\mu_a
\iff b\in \bigcap_{a\notin\overline{F}} 1/\mu_a\iff b\in\overline{F}.
$$

Define an equivalence relation $\sim$ on $A$, putting
$$
a\sim b \iff \bigl(\{a,b\}\cap \overline{F}=\emptyset \text{ and } (a,b)\in \alpha\bigr)
\text{ or } \{a,b\}\subseteq\overline{F}\setminus F  \text{ or } \{a,b\}\subseteq F.
$$
Equivalence classes of $\sim$ are precisely the congruence classes of $\alpha$,
except $1/\alpha$ which is partitioned into $F$ and $\overline{F}\setminus F$.
We show that $\sim$ is a congruence.

(1) Suppose $a\sim b$ and consider $a^*$ and $b^*$. By definition
$(a^*,b^*)\in\alpha$, and if $a^*\not\sim b^*$ then
without loss of generality $a^*\in\overline{F}\setminus F$, which immediately 
contradicts the definition of $\overline{F}$, as $a^* = a^{***}$. Hence
$\{a^*,b^*\}\subseteq F$, that is, $a^*\sim b^*$.

(2) Suppose $a\sim b$, $c\sim d$ and consider $a\wedge c$ and $b\wedge d$.
By definition of $\sim$ and the symmetries of the situation, the only non-obvious
case could arise with $a\wedge c\in \overline{F}\setminus F$ and
$b\wedge d\in F$. But this would imply $\{a,c\}\subseteq F$ and so
$a\wedge c\in F$, contradicting $a\wedge c\in \overline{F}\setminus F$
so this case is impossible. Hence, $(a\wedge c) \sim (b\wedge d)$.

(3) Suppose $a\sim b$, $c\sim d$ and consider $a\vee c$ and $b\vee d$.
Again, the only non-obvious cases could happen with
$a\vee c\in \overline{F}\setminus F$ and $b\vee d\in F$.
By primeness of $F$ then $b\in F$ or $d\in F$, so without loss of generality
suppose $b\in F$. Then $a\in F$ and so $\{a\vee c,b\vee d\}\subseteq F$. Hence
$(a\vee c) \sim (b\vee d)$.

To finish the proof of surjectivity of $\Phi$ it suffices to show
that $\alg{A}/_\sim$ is subdirectly irreducible.
This however is immediate: by definition of $\sim$ we see that
$\overline{F}\setminus F$ is the largest element of $\alg{A}/_\sim$ different
from $1/_\sim$; hence $\alg{A}/_\sim$ is isomorphic to $\sipalg{B}$ where
$\alg{B} = \alg{A}/\alpha$. 

Finally, Example~\ref{ex:not-iso} below shows that $\Phi$ in not an order-isomorphism:
$\mathcal{F}_p(\alg{C}_4)$ is a chain, whereas $\Cm{\alg{C}_4}$ is not.
\end{proof}

The next result is a corollary of Theorem~\ref{thm:cm-into-fp} and the duality
for the finite case.

\begin{cor}\label{cor:up-cm-charact}
Let $\alg{A}\in\var{Pa}$ be finite, and let $U\subseteq\Cm{\alg{A}}$.
Then
$$
U\in\HtM(\alg{A}) \iff \Phi(U)\in \mathrm{Up}(\mathcal{F}_p(\alg{A}))
$$
where  and $\Phi(U) = \{1/\mu: \mu\in U\}$, of course.
\end{cor}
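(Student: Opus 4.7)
The plan is to read off the corollary by translating between the two bijective parametrisations of the relevant data: completely meet-irreducible congruences on one side, prime filters on the other, with $\Phi$ from Theorem~\ref{thm:cm-into-fp} as the bridge, combined with the standard Birkhoff/Priestley representation of a finite distributive lattice.

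First I would unpack $\HtM$ in terms of $\Phi$. By definition $\HtM(a) = M(\Theta(a,1)) = \{\mu\in\cm{\alg{A}}:(a,1)\in\mu\}$, which is exactly $\{\mu\in\cm{\alg{A}}:a\in 1/\mu\} = \{\mu:a\in\Phi(\mu)\}$. Because Theorem~\ref{thm:cm-into-fp} tells us $\Phi\colon\Cm{\alg{A}}\to\mathcal{F}_p(\alg{A})$ is a bijection, pushing the previous identity through $\Phi$ gives
$$
\Phi(\HtM(a)) \;=\; \{F\in\mathcal{F}_p(\alg{A}):a\in F\} \;=\; \widehat{a}.
$$
This single identity is what drives both implications of the corollary.

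For the forward direction, if $U=\HtM(a)$ for some $a\in A$, then by the identity above $\Phi(U)=\widehat{a}$, and $\widehat{a}$ is trivially an upset of $(\mathcal{F}_p(\alg{A}),\subseteq)$: any prime filter containing $a$ and contained in $G$ forces $a\in G$. So $\Phi(U)\in\mathrm{Up}(\mathcal{F}_p(\alg{A}))$.

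For the reverse direction, suppose $\Phi(U)\in\mathrm{Up}(\mathcal{F}_p(\alg{A}))$. Here I would invoke finiteness: for a finite bounded distributive lattice $\alg{A}$, the map $a\mapsto\widehat{a}$ is a lattice isomorphism between $\alg{A}$ and $\mathrm{Up}(\mathcal{F}_p(\alg{A}))$ (this is Birkhoff duality for finite distributive lattices applied to the lattice reduct of $\alg{A}$, and is also a special case of the duality recorded earlier in the section). Hence there is some $a\in A$ with $\Phi(U)=\widehat{a}=\Phi(\HtM(a))$, and injectivity of $\Phi$ forces $U=\HtM(a)\in\HtM(\alg{A})$.

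There is no real obstacle here: Theorem~\ref{thm:cm-into-fp} supplies the bijection between $\Cm{\alg{A}}$ and $\mathcal{F}_p(\alg{A})$, and finiteness is precisely what guarantees surjectivity of $a\mapsto\widehat{a}$ onto upsets of $\mathcal{F}_p(\alg{A})$, which is where finiteness is needed (for infinite p-algebras only \emph{clopen} upsets of the Priestley dual are in the image, as already discussed).
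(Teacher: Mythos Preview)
Your proof is correct and follows exactly the approach the paper indicates: the corollary is stated as immediate from Theorem~\ref{thm:cm-into-fp} and the duality for the finite case, and that is precisely what you do by identifying $\Phi(\HtM(a))=\widehat{a}$ and invoking the bijection $a\mapsto\widehat{a}$ onto $\mathrm{Up}(\mathcal{F}_p(\alg{A}))$ available for finite $\alg{A}$.
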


\begin{example}\label{ex:not-iso}
Let $\mathbf{C}_4$ be the p-algebra whose lattice reduct is a 4-element chain.
The poset $\Cm{\alg{C}_4}$ has 3 elements ordered into a
$\Lambda$ shape, and the algebra $\mathrm{Up}(\Cm{\alg{C}_4})$
has 5 elements, with two subcovers of $1$. The embedding $\HtM$ maps
$\mathbf{C}_4$ to one of two possible 4-element chains in
$\mathrm{Up}(\Cm{\alg{C}_4})$. 
\begin{center}
\begin{tikzpicture}
\node[dot, label=right:$0$] (v0) at (0,0) {};
\node[dot, label=right:$a$] (v1) at (0,0.5) {};
\node[dot, label=right:$b$] (v2) at (0,1) {};
\node[dot, label=right:$1$] (v3) at (0,1.5) {};
\path[draw,thick] (v0)--(v1)--(v2)--(v3);
\node (v4) at (0.1,-1) {$\alg{C}_4$};
\end{tikzpicture}
\hspace{1.5cm}
\begin{tikzpicture}
\node[dot, label=right:{$\Theta(a,1)$}] (v0) at (0,1) {};
\node[dot, label=left:{$\Theta(a,b)$}] (v1) at (-0.5,0) {};
\node[dot, label=right:{$\Theta(b,1)$}] (v2) at (0.5,0) {};
\path[draw,thick] (v0)--(v1) (v0)--(v2);
\node (v3) at (0,-1) {$\mathbf{Cm}\,\mathbf{C}_4$};
\end{tikzpicture}
\hspace{1.5cm}
\begin{tikzpicture}
\node[dot, label=right:$\emptyset$] (v0) at (0,0) {};
\node[dot, label=right:$\up{\Theta(a,1)}$] (v1) at (0,0.5) {};
\node[dot, label=left:{$\up{\Theta(a,b)}$}] (v2) at (-0.5,1) {};
\node[dot, label=right:{$\up{\Theta(b,1)}$}] (v3) at (0.5,1) {};
\node[dot] (v4) at (0,1.5) {};
\path[draw,thick] (v0)--(v1)--(v2)--(v4) (v1)--(v3)--(v4);
\node (v4) at (0,-1) {$\mathrm{Up}(\Cm{\alg{C}_4})$};
\end{tikzpicture}  
\end{center}
\end{example}

To upgrade the map $\Phi$ to order-isomorphism, we now introduce a tighter order
on completely meet-irreducible congruences of any $\alg{A}\in\var{Pa}$.

\begin{defn}\label{def:cm-order}
Let $\alg{A}\in\var{Pa}$. For any $\mu,\nu\in\cm{\alg{A}}$ we put
$$  
\mu\leq^{\Cm{}}\nu \iff 1/\mu\subseteq 1/\nu.
$$
\end{defn}
It follows from Theorem~\ref{thm:cm-regular} that 
$\leq^{\Cm{}}$ is indeed an ordering relation, so the definition above is sound.
Example~\ref{ex:not-iso} shows that the new order is strictly tighter than
inclusion.

\begin{theorem}\label{thm:cm-summary}
Let $\alg{A}\in\var{Pa}$ and $\mu,\nu\in\cm{\alg{A}}$. Then, the following hold.
\begin{enumerate}
\item If $\mu\subseteq \nu$, then $\mu\leq^{\Cm{}}\nu$.
\item If $\mu,\nu\in\um{\alg{A}}$, then $\mu\subseteq \nu \iff \mu\leq^{\Cm{}}\nu$. 
\item If $\mu\in\dois{\alg{A}}, \nu\in\um{\alg{A}}\implies (\mu\subseteq \nu \iff
  \mu\leq^{\Cm{}}\nu)$,   
\item If $\alg{A}$ is finite, then
\begin{enumerate}
\item $\alg{A} \cong \mathrm{Up}(\cm{\alg{A}};\leq^{\Cm{}})$, and
\item ${id}\colon (\cm{\alg{A}},\subseteq)\to (\cm{\alg{A}},\leq^{\Cm{}})$ is a surjective pp-morphism.
\end{enumerate}  
\end{enumerate}
\end{theorem}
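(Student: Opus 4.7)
My plan is to handle (1)--(3), (4)(a), and (4)(b) in order, relying on Lemmas~\ref{lem:I-is-regular} and~\ref{lem:I-image}, Theorem~\ref{thm:cm-into-fp}, and Corollary~\ref{cor:up-cm-charact}. Part (1) is immediate from the definition: $\mu\subseteq\nu$ gives $1/\mu\subseteq 1/\nu$, which is $\mu\leq^{\Cm{}}\nu$. The forward implications of (2) and (3) follow from (1).

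For the converse in (2), with $\mu,\nu\in\um{\alg{A}}$, Lemma~\ref{lem:I-is-regular} applies to both congruences since $M(\mu),M(\nu)\subseteq\um{\alg{A}}$: given $(a,b)\in\mu$ I translate to $a\cdot b\in 1/\mu\subseteq 1/\nu$, and then translate back to $(a,b)\in\nu$. For the converse in (3), with $\mu\in\dois{\alg{A}}$ and $\nu\in\um{\alg{A}}$, I take $(a,b)\in\mu$ and compute in $\alg{A}/\mu\cong\Bnalg{m}$ that $(a\cdot b)/\mu$ always lies in $\{1/\mu,e_\mu/\mu\}$, so $(a\cdot b)^{**}\in 1/\mu\subseteq 1/\nu$. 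Since $\nu\in\um{\alg{A}}$, Lemma~\ref{lem:I-image} says $1/\nu$ is closed under $x^{**}\mapsto x$, yielding $a\cdot b\in 1/\nu$; a final application of Lemma~\ref{lem:I-is-regular} gives $(a,b)\in\nu$.

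For (4)(a) the plan is to transport the characterisation along $\Phi$. By Theorem~\ref{thm:cm-into-fp} together with the very definition of $\leq^{\Cm{}}$, the map $\Phi$ is an order-isomorphism from $(\cm{\alg{A}},\leq^{\Cm{}})$ to $(\mathcal{F}_p(\alg{A}),\subseteq)$; pulling back, $\leq^{\Cm{}}$-upsets of $\cm{\alg{A}}$ correspond bijectively to $\subseteq$-upsets of $\mathcal{F}_p(\alg{A})$, which by Corollary~\ref{cor:up-cm-charact} are exactly the sets $\HtM(a)$. A brief check using part (1) shows that the $\subseteq$- and $\leq^{\Cm{}}$-downsets of each $\HtM(a)$ coincide, so the two pseudo-complementation formulas agree, and Lemma~\ref{lem:homomorphism} then delivers the isomorphism $\alg{A}\cong\mathrm{Up}(\cm{\alg{A}};\leq^{\Cm{}})$.

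Part (4)(b) is where the main obstacle sits. Surjectivity of $id$ is trivial, continuity is automatic on a finite pp-space, and order-preservation is (1), so only the pp-condition---that the $\subseteq$-maximal elements above $\mu$ coincide with the $\leq^{\Cm{}}$-maximal elements above $\mu$---requires work. I would reduce this to the claim that the $\leq^{\Cm{}}$-maximal elements of $\cm{\alg{A}}$ are exactly $\um{\alg{A}}$, which are also the $\subseteq$-maximal ones. One inclusion: for $\mu\in\um{\alg{A}}$, $1/\mu$ is a $\subseteq$-maximal prime filter---if $1/\mu\subsetneq F$, pick $a\in F\setminus 1/\mu$; since $\alg{A}/\mu\cong\alg{2}$, $a\in 0/\mu$, so $a^*\in 1/\mu\subseteq F$, giving $0=a\wedge a^*\in F$, contradicting primeness. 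Conversely, any $\mu\in\dois{\alg{A}}$ satisfies $\mu\subseteq\mu^+\subseteq\nu$ for some $\nu\in M(\mu^+)\subseteq\um{\alg{A}}$, so $\mu\leq^{\Cm{}}\nu$ and $\mu\neq\nu$. With this maximality settled, both sides of the pp-condition reduce to $\{\nu\in\um{\alg{A}}:\mu\subseteq\nu\}=\{\nu\in\um{\alg{A}}:\mu\leq^{\Cm{}}\nu\}$, the equality being supplied by (2) or (3) according to the storey of $\mu$.
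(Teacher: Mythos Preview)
Your argument is correct throughout, and for parts (1)--(3) and (4)(b) it runs essentially parallel to the paper's proof, though in (3) you take a slightly more direct route: rather than first establishing $1/\mu^+\subseteq 1/\nu$ and then pushing $a\cdot b$ through $\mu^+$, you observe directly that $(a\cdot b)/\mu\in\{1/\mu,e_\mu/\mu\}$ and pass via $(a\cdot b)^{**}$. Both work; yours avoids the intermediate step. For (4)(b) you spell out in full what the paper dismisses as ``immediate by (1), (2), (3)''---your explicit identification of the $\leq^{\Cm{}}$-maximal elements with $\um{\alg{A}}$ is exactly the content being invoked.

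The one place worth tightening is (4)(a). The paper's route is shorter: since $\Phi\colon(\cm{\alg{A}},\leq^{\Cm{}})\to(\mathcal{F}_p(\alg{A}),\subseteq)$ is an order-isomorphism by construction, Priestley duality for the finite case gives $\alg{A}\cong\mathrm{Up}(\mathcal{F}_p(\alg{A}),\subseteq)\cong\mathrm{Up}(\cm{\alg{A}},\leq^{\Cm{}})$ immediately, with no need to compare pseudo-complements. Your approach via Lemma~\ref{lem:homomorphism} and Corollary~\ref{cor:up-cm-charact} is fine, but the phrase ``a brief check using part (1)'' undersells what is needed: part (1) gives only $\dw_\subseteq\HtM(a)\subseteq\dw_{\leq^{\Cm{}}}\HtM(a)$. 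For the reverse inclusion you need the separate observation that $\HtM(a^*)=(\dw_\subseteq\HtM(a))^\complement$ is itself a $\leq^{\Cm{}}$-upset (immediate since $a^*\in 1/\mu\subseteq 1/\nu$ forces $a^*\in 1/\nu$), so its complement is a $\leq^{\Cm{}}$-downset containing $\HtM(a)$, whence $\dw_{\leq^{\Cm{}}}\HtM(a)\subseteq\dw_\subseteq\HtM(a)$. This is easy, but it is not ``part (1)''.
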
  

\begin{proof}
(1) is immediate from definition. For (2), note that if
$\mu,\nu\in\um{\alg{A}}$ then $\alg{A}/\mu\cong\alg{2}\cong\alg{A}/\nu$, and
then using Lemma~\ref{lem:I-is-regular} and properties of p-algebras, we easily
obtain that $1/\mu\subseteq 1/\nu$ implies $\mu=\nu$.

For (3) the forward
direction is obvious; to show the backward direction note first that
$1/\mu^+\subseteq 1/\nu$. By Theorem~\ref{thm:orderable-si}(2) we have that
$1/\mu^+ = 1/\mu\cup e_\mu/\mu$. Let $a\in e_\mu/\mu$; then
$a^{**}/\mu = 1/\mu$ and so $a^{**} \in 1/\mu$. But $(a,a^{**})\in\nu$ and thus
$a\in 1/\nu$. Hence, $1/\mu^+\subseteq 1/\nu$. Now we will show that
$\mu\subseteq\nu$. To this end, let $(a,b)\in\mu$. Then $(a,b)\in\mu^+$
and so using Lemma~\ref{lem:I-is-regular} we get that
$(1, a\cdot b)\in\mu^+$, so  $(1, a\cdot b)\in\nu$ and therefore
$(a,b)\in\nu$. 

For (4) part (a), by Theorem~\ref{thm:cm-into-fp} and
Definition~\ref{def:cm-order} we get that $(\mathcal{F}_p(\alg{A});\subseteq)$
and $(\cm{\alg{A}};\leq^{\Cm{}})$ are order-isomorphic, so the claim follows by
duality. Finally, (4) part (b) is immediate by (1), (2), (3) and the fact that the
for pp-morphisms of finite pp-spaces topological conditions are irrelevant, as
the topology is discrete.
\end{proof}  

\section{Join-irreducible elements}

Although further study of representations of arbitrary p-algebras in topologised
posets of their completely meet-irreducible congruences seems interesting, we will not
pursue it in this article. By local finiteness, the finite case is all
we need for the description of free algebras.

Just like in finite distributive lattices and finite Heyting algebras, for a finite
p-algebra $\alg{A}$ there is a dual order-isomorphism between
$\mathcal{F}_p(\alg{A})$ and the set $\mathcal{J}(\alg{A})$ of join-irreducible
elements of $A$, given by $\mathcal{J}(\alg{A})\ni a\mapsto \up{a}\in
\mathcal{F}_p(\alg{A})$, with the order $a\leq b \iff \up{b}\subseteq\up{a}$.  
As an immediate corollary of Theorem~\ref{thm:cm-into-fp} we obtain the result
below.

\begin{cor}\label{cor:cm-into-jirr}
Let $\alg{A}\in\var{Pa}$ be finite, and let
$\Psi\colon \Cm{\alg{A}}\to \mathcal{J}(\alg{A})$ be given by
$\Psi(\mu) = \bigwedge\Phi(\mu)$. Then $\Psi$ is an order-inverting bijection, but
not a dual order-isomorphism.
\end{cor}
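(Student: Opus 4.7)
The plan is to factor $\Psi$ as a composition of two maps whose properties are already established. Specifically, consider the map $\iota\colon\mathcal{F}_p(\alg{A})\to\J{\alg{A}}$ defined by $\iota(F) \deq \bigwedge F$. For any finite distributive lattice, $\iota$ is a well-known dual order-isomorphism, with inverse $j\mapsto\up{j}$; this uses only that $\alg{A}$ has a finite distributive lattice reduct. By definition, $\Psi = \iota\circ\Phi$, where $\Phi\colon\Cm{\alg{A}}\to\mathcal{F}_p(\alg{A})$ is the map from Theorem~\ref{thm:cm-into-fp}. In particular, $\Psi$ is well-defined as a map into $\J{\alg{A}}$.

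Bijectivity of $\Psi$ then follows immediately from bijectivity of both $\Phi$ (Theorem~\ref{thm:cm-into-fp}) and $\iota$. For order-inversion, suppose $\mu\subseteq\nu$ in $\Cm{\alg{A}}$. By Theorem~\ref{thm:cm-into-fp}, $\Phi$ is order-preserving, so $1/\mu\subseteq 1/\nu$, whence $\bigwedge 1/\nu \leq \bigwedge 1/\mu$ in $\alg{A}$; that is, $\Psi(\nu)\leq\Psi(\mu)$.

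It remains to check that $\Psi$ is not a dual order-isomorphism. Since $\iota$ is a dual order-isomorphism, if $\Psi = \iota\circ\Phi$ were a dual order-isomorphism, then $\Phi = \iota^{-1}\circ\Psi$ would be an order-isomorphism, contradicting the final sentence of Theorem~\ref{thm:cm-into-fp}. The witnessing algebra is $\alg{C}_4$ from Example~\ref{ex:not-iso}: concretely, $\J{\alg{C}_4} = \{a,b,1\}$ is a chain under the order inherited from $\alg{C}_4$, while $\Cm{\alg{C}_4}$ is $\Lambda$-shaped with $\Theta(a,b)$ and $\Theta(b,1)$ incomparable, so no order-reversing bijection between them can be a dual order-isomorphism.

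There is no real obstacle here: the statement is a routine consequence of Theorem~\ref{thm:cm-into-fp} together with the standard duality between prime filters and join-irreducibles in a finite distributive lattice. The only thing to be careful about is that the failure of dual order-isomorphism is inherited from the failure of $\Phi$ to be an order-isomorphism, which is precisely the content of Example~\ref{ex:not-iso}.
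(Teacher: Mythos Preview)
Your proof is correct and follows precisely the approach the paper intends: the corollary is stated right after the observation that $a\mapsto\up{a}$ is a dual order-isomorphism between $\J{\alg{A}}$ and $\mathcal{F}_p(\alg{A})$, and is explicitly flagged as an immediate consequence of Theorem~\ref{thm:cm-into-fp}. Your factorisation $\Psi = \iota\circ\Phi$ makes this explicit, and your use of Example~\ref{ex:not-iso} to witness the failure of dual order-isomorphism is exactly what the paper has in mind.
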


\begin{lemma}\label{lem:star-for-join-irr}
Let $\alg{A}\in\var{Pa}$ be finite. If $a\in \J{\alg{A}}$ is not an
atom, then $(a_-)^* = a^*$, where $a_-$ is the unique subcover of $a$.
\end{lemma}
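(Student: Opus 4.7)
The plan is to prove the equality by establishing the two inequalities $a^* \leq (a_-)^*$ and $(a_-)^* \leq a^*$ separately. The first is immediate from $a_- \leq a$ and the standard fact that pseudo-complementation is order-reversing in any p-algebra. For the converse, I would aim to show $(a_-)^* \wedge a = 0$, since by the defining property of ${}^*$ this yields $(a_-)^* \leq a^*$.

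To prove $(a_-)^* \wedge a = 0$, I would invoke a basic fact about finite lattices: a join-irreducible element $a$ is characterised by having a unique lower cover, which in our case is $a_-$, and consequently every element strictly below $a$ must lie below $a_-$ (since $a_-$ is the join of all proper predecessors of $a$). Applied to $(a_-)^* \wedge a$, which is at most $a$, this produces a dichotomy: either $(a_-)^* \wedge a = a$, or $(a_-)^* \wedge a \leq a_-$. The non-atomicity hypothesis rules out the first alternative: $(a_-)^* \wedge a = a$ would give $a \leq (a_-)^*$, hence $a \wedge a_- = 0$; but $a_- \leq a$ forces $a \wedge a_- = a_-$, so $a_- = 0$, meaning $a$ is an atom, a contradiction. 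The second alternative yields $(a_-)^* \wedge a \leq a_- \wedge (a_-)^* = 0$, which is what we wanted.

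Overall, the proof is quite short and essentially structural; it invokes neither the duality nor the machinery of $\cm{\alg{A}}$ developed in the previous section, relying only on the defining property of ${}^*$ and elementary combinatorics of join-irreducibles in a finite lattice. The only delicate point is the dichotomy for elements below a join-irreducible, but I would dispatch it in one line by appealing to the uniqueness of the subcover. I do not foresee any serious obstacle; the role of the non-atomicity assumption is precisely to prevent the degenerate case where the subcover collapses to $0$ and the argument would break.
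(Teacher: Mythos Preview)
Your proposal is correct and essentially identical to the paper's proof: both establish $(a_-)^* \leq a^*$ by the dichotomy on $a \wedge (a_-)^*$ coming from join-irreducibility, eliminate the case $a \wedge (a_-)^* = a$ via the non-atomicity assumption, and obtain the reverse inequality from $a_- \leq a$. The only difference is cosmetic ordering.
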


\begin{proof}
Obviously $a\wedge (a_-)^* \leq a$, and since $a\in \J{\alg{A}}$
we have either $a\wedge (a_-)^* = a$ or $a\wedge (a_-)^* \leq a_-$.
If the former, then $0 = a\wedge (a_-)^* \wedge a_- = a\wedge a_- = a_-$
contradicting the assumption that $a$ is not an atom. Hence, we have
$a\wedge (a_-)^* \leq a_-$, and so
$a\wedge (a_-)^* \leq a_-\wedge (a_-)^* = 0$. Then by properties of $*$ it
follows that $(a_-)^*\leq a^*$. On the other hand,
as $a_-\leq a$, we get $a^*\leq (a_-)^*$ completing the proof.
\end{proof}

\begin{lemma}\label{lem:atoms}
Let $\alg{A}\in\var{Pa}$ be finite, and let $\mu\in\cm{\alg{A}}$. Then
$$  
\mu\in\mathrm{I}_\alg{A}\iff 1/\mu = \up{a} \text{ for some atom } a.
$$
\end{lemma}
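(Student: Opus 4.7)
The plan is to prove both directions using Lemma~\ref{lem:I-image}, which characterises the filters $1/\mu$ for $\mu\in\mathrm{I}_\alg{A}$ as those prime filters closed under the rule $b^{**}\in F\Rightarrow b\in F$. Combined with the well-known fact that in a finite distributive lattice every prime filter is principal, generated by a join-irreducible element, this should make both directions short.

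For the backward direction, I will assume $1/\mu = \up{a}$ with $a$ an atom and verify the condition of Lemma~\ref{lem:I-image}. So take any $b\in A$ with $b^{**}\geq a$; I want to conclude $b\geq a$. Suppose for contradiction that $b\not\geq a$. Since $a$ is an atom, $b\wedge a\in\{0,a\}$, and $b\wedge a = a$ would force $a\leq b$, contradiction. Hence $b\wedge a = 0$, i.e.\ $b\leq a^*$, so $b^{**}\leq a^{***} = a^*$. Combined with $a\leq b^{**}$ this gives $a\leq a^*$, whence $a = a\wedge a^* = 0$, contradicting $a$ being an atom.

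For the forward direction, assume $\mu\in \mathrm{I}_\alg{A}$. By Lemma~\ref{lem:mu-plus}(1) $1/\mu$ is a prime filter, so by finiteness $1/\mu = \up{a}$ for some $a\in\J{\alg{A}}$. The task is to rule out the possibility that $a$ is not an atom. If $a$ were not an atom, its unique lower cover $a_-$ exists and Lemma~\ref{lem:star-for-join-irr} gives $(a_-)^* = a^*$, hence $(a_-)^{**} = a^{**}\geq a$. Thus $(a_-)^{**}\in 1/\mu$, and applying Lemma~\ref{lem:I-image} to $\mu\in\mathrm{I}_\alg{A}$ yields $a_-\in 1/\mu = \up{a}$, i.e.\ $a\leq a_-$, which contradicts $a_-\prec a$. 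So $a$ must be an atom.

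I do not expect a real obstacle here: the two directions are essentially one-line applications of the tools already developed, and the only mildly subtle step is invoking Lemma~\ref{lem:star-for-join-irr} to produce the element witnessing failure of closure under $b^{**}\mapsto b$ in the forward direction.
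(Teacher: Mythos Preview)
Your proof is correct, but it takes a different route from the paper's, especially in the backward direction. The paper argues that direction by contraposition via the two-storey structure of $\Cm{\alg{A}}$: if $\mu\in\mathrm{II}_\alg{A}$, pick $\nu\in\mathrm{I}_\alg{A}$ with $\nu>\mu$, use the forward direction to write $1/\nu=\up{b}$, and obtain $0<b<a$ from $1/\mu\subsetneq 1/\nu$, contradicting atomicity of $a$. You instead verify directly that $\up{a}$ satisfies the closure condition of Lemma~\ref{lem:I-image}. Your argument is self-contained and avoids appealing to the already-proved forward direction; the paper's argument is shorter once that direction is in hand. For the forward direction the two proofs are close cousins: both hinge on Lemma~\ref{lem:star-for-join-irr}, but the paper reads off $a^*\neq (a_-)^*$ from the two-element quotient $\alg{A}/\mu$, whereas you feed $(a_-)^{**}\geq a$ into the closure condition of Lemma~\ref{lem:I-image}.

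One small point worth making explicit in your backward direction: Lemma~\ref{lem:I-image} only tells you that $\up{a}\in\Phi(\mathrm{I}_\alg{A})$, i.e.\ that $\up{a}=1/\mu'$ for \emph{some} $\mu'\in\mathrm{I}_\alg{A}$. To conclude that your given $\mu$ lies in $\mathrm{I}_\alg{A}$ you need the injectivity of $\Phi$ on $\cm{\alg{A}}$ (Theorem~\ref{thm:cm-regular} or Theorem~\ref{thm:cm-into-fp}), which is already available at this point in the paper.
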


\begin{proof}
For the forward direction, it is clear that $1/\mu = \up{a}$ for
some $a\in\J{\alg{A}}$. Then $a_-\notin 1/\mu$, and since
$\mu\in\mathrm{I}_\alg{A}$, we have $\alg{A}/\mu \cong\mathbf{2}$;
therefore $a_-/\mu = 0/\mu = a^*/\mu$, and $(a_-)^*/\mu = 1/\mu$.
This means $a^*\neq (a_-)^*$, so by Lemma~\ref{lem:star-for-join-irr}
$a$ is an atom. 

For the backward direction, suppose $\mu\in\mathrm{II}_\alg{A}$.
Then, there is some $\nu > \mu$ with $\nu\in\mathrm{I}_\alg{A}$
and $1/\nu = \up{b}$ for some $b\in\J{\alg{A}}\setminus\{a\}$.
Then $\up{a} = 1/\mu \subsetneq 1/\nu = \up{b}$ and so $0 < b < a$,
contradicting the fact that $a$ is an atom. 
\end{proof}  

Atoms will be important in the considerations to come, so
we introduce a common shorthand $\At{\alg{A}}$ for the set of atoms of $\alg{A}$.

\begin{lemma}\label{lem:atoms-and-cms}
Let $\alg{A}\in\var{Pa}$ be finite, let $a\in\At{\alg{A}}$. Then
\begin{enumerate}
\item $\HtM(a) = \{\mu\}$  and $\mu = \Theta(a,1)$,
\item $\HtM(a^*) = (\um{\alg{A}}\setminus \{\mu\})
\cup\{\nu\in\dois{\alg{A}}: \nu\not\subseteq \mu\}$,
\item $\HtM(a^{**}) = \{\mu\} \cup\{\nu\in\dois{\alg{A}}: \nu^+ = \mu\}$.
\end{enumerate}  
\end{lemma}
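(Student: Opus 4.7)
The whole lemma rests on translating membership of an element in a class modulo $\nu$ into a condition on the quotient $\alg{A}/\nu \cong \Bnalg{m}$, combined with three tools already established: the bijection $\Phi\colon\Cm{\alg{A}}\to\mathcal{F}_p(\alg{A})$ of Theorem~\ref{thm:cm-into-fp}, the comparison result Theorem~\ref{thm:cm-summary}(3) (for $\nu\in\dois{\alg{A}}$ and $\mu\in\um{\alg{A}}$, $\nu\subseteq\mu\iff 1/\nu\subseteq 1/\mu$), and the description of $1/\nu^+$ from Lemma~\ref{lem:mu-plus}(2). The key elementary observation I will use repeatedly is that since $a$ is an atom, for any $b$ we have $b\wedge a\in\{0,a\}$, so $b\geq a$ is forced as soon as $b\wedge a\neq 0$.

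For (1), the set of prime filters of $\alg{A}$ containing an atom $a$ is a singleton, namely $\{\up{a}\}$: in a finite distributive lattice every prime filter is $\up{c}$ for some $c\in\J{\alg{A}}$, and the only join-irreducible $\leq a$ is $a$ itself. By the bijection $\Phi$ and Lemma~\ref{lem:atoms}, there is a unique $\mu\in\cm{\alg{A}}$ with $1/\mu=\up{a}$, and this $\mu$ lies in $\um{\alg{A}}$. Hence $\HtM(a)=\{\mu\}$, and since $\alg{A}$ is finite and congruence-distributive, $\Theta(a,1)=\bigcap M(\Theta(a,1))=\bigcap\{\mu\}=\mu$.

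For (2), I observe that in $\Bnalg{m}$ we have $x^*=1$ iff $x=0$, so $a^*\in 1/\nu$ iff $a\in 0/\nu$. When $\nu\in\um{\alg{A}}$, $A=1/\nu\cup 0/\nu$, so this is equivalent to $a\notin 1/\nu$, i.e.\ (by (1)) to $\nu\neq\mu$. When $\nu\in\dois{\alg{A}}$, I will show $a\in 0/\nu$ iff $\nu\not\subseteq\mu$. The forward direction of the contrapositive is immediate: if $\nu\subseteq\mu$ then $0/\nu\subseteq 0/\mu=A\setminus\up{a}$, so $a\notin 0/\nu$. For the converse, suppose $a\notin 0/\nu$ and take $b\in 1/\nu$; then $(b\wedge a)/\nu=a/\nu\neq 0/\nu$, so $b\wedge a\neq 0$, forcing $b\wedge a=a$, i.e.\ $b\in\up{a}=1/\mu$. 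Hence $1/\nu\subseteq 1/\mu$ and Theorem~\ref{thm:cm-summary}(3) delivers $\nu\subseteq\mu$.

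For (3), using the description of $x^{**}$ in $\Bnalg{m}$ (namely $x^{**}=1$ iff $x\in\{1,e\}$ in the subdirectly irreducible quotient), together with Lemma~\ref{lem:mu-plus}(2), we get $a^{**}\in 1/\nu$ iff $a\in 1/\nu^+$. For $\nu\in\um{\alg{A}}$ the quotient is $\alg{2}$ so $x^{**}=x$, and the condition reduces to $a\in 1/\nu$, i.e.\ $\nu=\mu$. For $\nu\in\dois{\alg{A}}$ I must show $a\in 1/\nu^+$ iff $\nu^+=\mu$. The backward direction is trivial. For the forward direction, $a\in 1/\nu^+=1/\nu\cup e_\nu/\nu$; the case $a\in 1/\nu$ contradicts (1) (it would force $\nu=\mu\in\um{\alg{A}}$), so $a\in e_\nu/\nu$. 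Then for any $b\in 1/\nu$ the atom trick gives $b\wedge a\in e_\nu/\nu$; if $b\wedge a=0$ we would have $(0,e_\nu)\in\nu$, whence $(1,e_\nu^*)=(0^*,e_\nu^*)\in\nu$ while $e_\nu^*\in 0/\nu$, collapsing $\nu$ to the full congruence — impossible. So $b\geq a$, giving $1/\nu\subseteq\up{a}=1/\mu$, hence $\nu\subseteq\mu$ by Theorem~\ref{thm:cm-summary}(3). Combined with $\mu\subseteq\nu^+$ (which follows from $(a,1)\in\nu^+$) and the fact that $\nu^+$ is the unique cover of $\nu$, I conclude $\nu^+=\mu\vee\nu=\mu$. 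I expect the last step — bootstrapping $a\in 1/\nu^+$ to $\nu\subseteq\mu$ — to be the main obstacle, since it is where the atom hypothesis, the structure of $1/\nu^+$, and Theorem~\ref{thm:cm-summary}(3) must all be combined.
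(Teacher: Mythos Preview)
Your argument is correct, and part (1) matches the paper's. For parts (2) and (3), however, you take a more hands-on route than the paper, and it is worth seeing the shortcut you missed. For (2), the paper simply invokes Lemma~\ref{lem:homomorphism}: since $\HtM$ is a p-algebra homomorphism, $\HtM(a^*) = \dw{\HtM(a)}^\complement = \cm{\alg{A}}\setminus\dw{\mu}$, and the two-storey decomposition drops out immediately. Your element-wise analysis via $a^*/\nu = 1/\nu \iff a/\nu = 0/\nu$ together with the atom trick is perfectly sound, but it re-proves from scratch what the homomorphism property already packages. For (3), the forward direction for $\nu\in\dois{\alg{A}}$ is where the approaches diverge most. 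The paper observes that $a^{**}\in 1/\nu$ forces $a\in 1/\nu^+$ (since $(a,a^{**})\in\varphi$ for every $\varphi\in M(\nu^+)\subseteq\um{\alg{A}}$), whence $M(\nu^+)\subseteq\HtM(a)=\{\mu\}$; as $M(\nu^+)\neq\emptyset$ this yields $\nu^+=\mu$ in one stroke. You instead locate $a$ in $e_\nu/\nu$, rerun the atom trick to obtain $1/\nu\subseteq 1/\mu$, appeal to Theorem~\ref{thm:cm-summary}(3) for $\nu\subseteq\mu$, and finally sandwich with $\mu=\Theta(a,1)\subseteq\nu^+$. This works, but the paper's ``$M(\nu^+)\subseteq\HtM(a)$'' observation short-circuits most of that labour. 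Your approach has the virtue of being self-contained at the level of quotients; the paper's buys brevity by leaning on the earlier structural lemmas.
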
  

\begin{proof}
Since $\At{\alg{A}}\subseteq\J{\alg{A}}$, by Lemma~\ref{lem:atoms} we have
$\up{a} = 1/\mu$  for some $\mu\in\um{\alg{A}}$.  Clearly,
$\mu\in\HtM(a)$. Suppose $\nu\in\HtM(a)$; then $1/\nu = \up{b}$ for some
$b\in\J{\alg{A}}$ such that $a\in\up{b}$, so $b\leq a$. Since
$a\in\At{\alg{A}}$ and $0\notin\J{\alg{A}}$, we get $a = b$. Hence,
by Theorem~\ref{thm:cm-regular},
$\nu = \mu$. Moreover, $\Theta(a,1) = \bigcap\HtM(a) = \bigcap\{\mu\} = \mu$.
This proves (1).

For (2), we have $\HtM(a^*) = \dw{\HtM(a)}^\complement =
\cm{\alg{A}}\setminus \dw{\mu} =
(\um{\alg{A}}\setminus\{\mu\})\cup\{\nu\in\dois{\alg{A}}:\nu\not\subseteq
\mu\}$.

For (3), first note that for any $b\in A$ and any
$\nu\in\um{\alg{A}}$, we have $(b,b^{**})\in\nu$, so
for any $\alpha\in\Con{\alg{A}}$, if $M(\alpha)\subseteq \um{\alg{A}}$
then $(b,b^{**})\in\alpha$. Therefore $\HtM(a^{**})\cap \um{\alg{A}} = \{\mu\}$.
Now, assume $\nu\in \HtM(a^{**})\cap\dois{\alg{A}}$. 
Then, $a^{**}\in 1/\nu\subsetneq 1/\nu^+$, and since
$M(\nu^+)\subseteq\um{\alg{A}}$, we have $a^{**}/\nu^+ = 1/\nu^+ = a/\nu^+$.
Therefore, $M(\nu^+)\subseteq \HtM(a) = \{\mu\}$.
For the converse inclusion, take $\nu\in\cm{\alg{A}}$ with $\nu^+ = \mu$.
Then $a^{**}\in 1/\mu$ and, by Theorem~\ref{thm:orderable-si} we have
$1/\nu^+ = 1/\nu\cup e_\nu/\nu$. If we had $a^{**}\in e_\nu/\nu$, then
$e_\nu/\nu = a^{**}/\nu = (a^{**}/\nu)^{**} = e_\nu^{**}/\nu = 1/\nu$
which is a contradiction. Hence, $a^{**}/\nu = 1/\nu$ and
$\nu\in \HtM(a^{**})$ as required.
\end{proof}

\subsection{Dense and regular elements}
As in the theory of Heyting algebras, $a$ is called \emph{dense} if $a^* = 0$,
and \emph{regular} if $a^{**} = a$. For an algebra $\alg{A}\in\var{Pa}$
we use $R(\alg{A})$ for the set of regular elements of $\alg{A}$,
and $D(\alg{A})$ for the set of dense elements of $\alg{A}$.
Many familiar properties of dense and regular elements carry over to
the theory of p-algebras. We will focus on these that are useful
in relation to completely-meet irreducible congruences.

The usual Glivenko interpretation of Boolean algebras in Heyting algebras
applies without change to p-algebras. Namely, defining $\sim_G$
on any $\alg{A}\in\var{Pa}$ by
$$
a\sim_G b \iff a^{**} = b^{**}
$$
we have that $\sim_G$ is a congruence. Indeed $\sim_G$ turns out to be
the kernel of the map $R\colon \alg{A}\to R(\alg{A})$,
given by $R(a) = a^{**}$. Clearly $R$ is idempotent, and it is straightforward to show
that $R$ preserves $\wedge$ and ${}^*$. In general $R$ does not preserve $\vee$,
but $R(\alg{A}) = \bigl(R(A); \wedge, \sqcup, {}^*, 0,1\bigr)$ with
$x\sqcup y := (x\vee y)^{**}$ is a Boolean algebra.

\begin{lemma}\label{lem:gliv-I}
Let $\alg{A}\in\var{Pa}$. Then $M(\sim_G) = \um{\alg{A}}$.
\end{lemma}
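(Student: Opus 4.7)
The plan is to prove the equality by double inclusion, using the description of subdirectly irreducible quotients from Theorem~\ref{thm:si-descr} together with the elementary identity $a^{**} = a^{****}$, which gives $a \sim_G a^{**}$ for every $a \in A$.

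For the inclusion $\um{\alg{A}} \subseteq M(\sim_G)$, I would fix $\mu \in \um{\alg{A}}$, so that $\alg{A}/\mu \cong \mathbf{2}$. Since $\mathbf{2}$ is Boolean, every element of $\alg{A}/\mu$ is regular, hence $(a, a^{**}) \in \mu$ for all $a \in A$. Consequently, if $a \sim_G b$, then $a^{**} = b^{**}$ and transitivity through $a^{**}$ yields $(a,b)\in\mu$. Thus $\sim_G \subseteq \mu$, i.e., $\mu \in M(\sim_G)$.

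For the reverse inclusion, take $\mu \in M(\sim_G)$. Since $\mu$ is completely meet-irreducible, $\alg{A}/\mu$ is subdirectly irreducible, so by Theorem~\ref{thm:si-descr} we have $\alg{A}/\mu \cong \Bnalg{n}$ for some $n \geq 0$. Suppose toward contradiction that $n \geq 1$; then $\alg{A}/\mu$ has a unique subcover $e_\mu/\mu$ of $1/\mu$, and by the description of pseudo-complementation on $\sipalg{B}$ we compute $(e_\mu)^{**}/\mu = (e_\mu/\mu)^{**} = 1/\mu$, while $e_\mu/\mu \neq 1/\mu$. On the other hand, $((e_\mu)^{**})^{**} = (e_\mu)^{**}$ gives $e_\mu \sim_G (e_\mu)^{**}$, so by $\sim_G \subseteq \mu$ we get $(e_\mu, (e_\mu)^{**}) \in \mu$, contradicting $e_\mu/\mu \neq 1/\mu$. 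Hence $n = 0$ and $\mu \in \um{\alg{A}}$.

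There is no real obstacle: the whole argument rests on recognising that $\sim_G$ collapses $e_\mu$ with $(e_\mu)^{**} = 1$ for any nontrivial Boolean part, so only the quotients isomorphic to $\mathbf{2}$ can accommodate $\sim_G$. The slight care needed is in verifying that pseudo-complementation computed in the quotient matches that computed representative-wise, which is immediate since $\mu$ is a congruence.
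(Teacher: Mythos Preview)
Your proof is correct and follows essentially the same approach as the paper's: both directions argue by showing that in any subdirectly irreducible quotient with a nontrivial Boolean part, the element $e_\mu$ is $\sim_G$-equivalent to something in $1/\mu$, yielding the contradiction $e_\mu/\mu = 1/\mu$. The paper reaches this via the dense element $e_\nu\vee e_\nu^*$, while you use the pair $(e_\mu, e_\mu^{**})$ directly; these are minor variations of the same idea.

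One small inaccuracy worth fixing: you write $\alg{A}/\mu \cong \Bnalg{n}$ for some $n\geq 0$, but Theorem~\ref{thm:si-descr} only guarantees $\alg{A}/\mu \cong \sipalg{B}$ for some (possibly infinite) Boolean algebra $\alg{B}$. Your argument does not actually use finiteness of $\alg{B}$, so it goes through unchanged once you replace ``$n\geq 1$'' by ``$\alg{B}$ nontrivial''.
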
  

\begin{proof} For `$\supseteq$' 
let $\mu\in\mathrm{I}_\alg{A}$ and $x\sim_G y$. Then $x^{**} = y^{**}$, and
therefore $x\equiv_{\mu} x^{**} = y^{**} \equiv_{\mu} y$, so $(x,y)\in \mu$.
Hence, $\mathord{\sim_G}\subseteq \mu$, which means $\mu\in M(\sim_G)$, proving
the right-to-left inclusion.

For `$\subseteq$' take $\nu\in M(\sim_G)$ and suppose
$\nu\notin\mathrm{I}_\alg{A}$. Then $\alg{A}/\nu\cong\sipalg{B}$ for 
some Boolean algebra $\alg{B}$. Let $e_\nu$ be an element of $A$ such that
$e_\nu/\nu$ is the unique subcover of $1$ in $\sipalg{B}$.
Since $\sim_G\,\subseteq \nu$, we get
$D(\alg{A}) = 1/_{\!\sim_G}\subseteq 1/\nu$ and $e_\nu\vee e^*_\nu \in
D(\alg{A})$. Therefore 
$(e_\nu\vee e^*_\nu)/\nu = 1/\nu$, whence
$1/\nu = e_\nu/\nu \vee e^*_\nu/\nu = e_\nu/\nu\vee 0/\nu = e_\nu/\nu$ which is
a contradiction. 
\end{proof}

\begin{lemma}\label{lem:gliv-atoms}
Let $\alg{A}\in\var{Pa}$ be finite, with the set of atoms
$\At{\alg{A}} = \{a_1,\dots,a_n\}$.
Then $1/_{\sim_G} = D(\alg{A}) = \up{\bigvee_{i=1}^n a_i}$.
\end{lemma}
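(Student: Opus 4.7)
The plan is to prove the two equalities separately, and they split along very different lines of argument. The first equality, $1/_{\sim_G} = D(\alg{A})$, is essentially a definitional unpacking: since $\sim_G$ is the kernel of $a\mapsto a^{**}$ and $1^{**} = 1$, we have $a\in 1/_{\sim_G}$ iff $a^{**} = 1$; and this in turn is equivalent to $a^* = 0$ by the p-algebra identities $0^* = 1$, $a^{***} = a^*$. So the first equality requires no real work.

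For the nontrivial equality $D(\alg{A}) = \up{d}$ with $d\deq\bigvee_{i=1}^n a_i$, I would prove the two inclusions separately, using finiteness only via the fact that a finite bounded distributive lattice is atomic (every nonzero element lies above some atom).

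For $\up{d}\subseteq D(\alg{A})$, the key sub-claim is $d^* = 0$; once this is established, $a\geq d$ immediately gives $a^*\leq d^* = 0$, placing $a$ in $D(\alg{A})$. To see $d^* = 0$, I would argue directly from the definition of pseudo-complement: if $b\wedge d = 0$ then $b\wedge a_i = 0$ for every atom $a_i$, so by atomicity $b$ cannot lie above any atom and hence $b = 0$. Thus $d^*$, being the largest element meeting $d$ trivially, is $0$.

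For the reverse inclusion $D(\alg{A})\subseteq\up{d}$, take $a$ with $a^* = 0$. For each atom $a_i$, either $a_i\leq a$ or $a_i\wedge a = 0$ by the defining property of atoms; in the second case $a_i\leq a^* = 0$, which is absurd. Hence $a_i\leq a$ for all $i$, so $d\leq a$. No serious obstacle is anticipated: the argument is elementary once atomicity is invoked, and does not draw on any of the congruence-theoretic machinery developed earlier — the preceding Lemma~\ref{lem:gliv-I} and the atom analysis of Lemmas~\ref{lem:atoms}--\ref{lem:atoms-and-cms} are not needed here, although the result will be used together with them in what follows.
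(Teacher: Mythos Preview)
Your proof is correct, and it takes a genuinely different route from the paper's. You argue directly from atomicity and the defining property of the pseudo-complement: $d^* = 0$ because any element disjoint from $d$ must miss every atom and hence be $0$, and conversely any dense element must dominate every atom. The paper instead works through the congruence machinery it has been building: it uses Lemma~\ref{lem:homomorphism} and Lemma~\ref{lem:atoms} to compute $\HtM(d_0) = \um{\alg{A}}$, concludes $\HtM(d_0^*) = \emptyset$ and hence $d_0^* = 0$, and for the reverse inclusion shows that any dense $d$ satisfies $\um{\alg{A}}\subseteq\HtM(d)$, whence $d_0\leq d$ by injectivity of $\HtM$. Your approach is more elementary and self-contained, requiring only that a finite bounded distributive lattice is atomic; the paper's approach is less direct but keeps the argument inside the $\HtM$-representation framework that the surrounding sections rely on. Your remark that Lemmas~\ref{lem:atoms}--\ref{lem:atoms-and-cms} are not needed is thus accurate for your proof but not for the paper's.
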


\begin{proof}
Let $d_0 \deq  \bigvee_{i=1}^n a_i$. By Lemmas~\ref{lem:homomorphism}
and~\ref{lem:atoms}  we have
$\HtM(d_0) = \bigcup_{i=1}^n \HtM(a_i) = \mathrm{I}_\alg{A}$.
Therefore, $\HtM(d_0^*) = \dw{\HtM(d_0)}^\complement =
(\Cm{\alg{A}})^\complement = \emptyset = \HtM(0)$, which implies $d_0^* = 0$
and so $d_0\in D(\alg{A})$. Now take any $d\in D(\alg{A})$. Since $d^* = 0$, we have
$d/\mu = 1/\mu$ for every $\mu\in\mathrm{I}_\alg{A}$. This means that
$\mathrm{I}_\alg{A}\subseteq \HtM(d)$, and thus $\HtM(d_0)\subseteq\HtM(d)$,
and consequently, by Lemma~\ref{lem:homomorphism}, we obtain $d_0\leq d$ showing
that $d_0$ is the smallest element in $D(\alg{A})$.
\end{proof}

Since by Lemmas~\ref{lem:atoms} and~\ref{lem:atoms-and-cms}(1),
for every finite $\alg{A}\in\var{Pa}$
there is a bijection between  $\At{\alg{A}}$ and $\um{\alg{A}}$,
we introduce another piece of shorthand notation,
and for any $a\in\At{\alg{A}}$ write $\mu_a$ for the unique congruence
$\mu\in\um{\alg{A}}$ such that $\HtM(a) = \{\mu\} = \{\Theta(a,1)\}$.

\begin{lemma}\label{lem:second-storey}
Let $\alg{A}\in\var{Pa}$ be finite, $a\in\At{\alg{A}}$, and let $\nu\in\dois{\alg{A}}$.
If\/ $1/\nu = \up{p}$, then the following hold:   
\begin{enumerate}
\item $\mu_a\notin M(\nu^+) \iff p\leq a^*$, 
\item $\mu_a\in M(\nu^+) \iff\nu < \mu_a\iff a < p$.
\end{enumerate}  
\end{lemma}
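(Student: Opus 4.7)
The plan is to exploit Lemma~\ref{lem:atoms-and-cms}(2), which pins down $\HtM(a^*)$ as $(\um{\alg{A}}\setminus\{\mu_a\})\cup\{\eta\in\dois{\alg{A}}:\eta\not\subseteq\mu_a\}$. Combined with $1/\nu = \up{p}$ it yields $\nu\in\HtM(a^*)\iff a^*\in\up{p}\iff p\leq a^*$, and on the other hand, since $\nu\in\dois{\alg{A}}$, the same lemma also gives $\nu\in\HtM(a^*)\iff\nu\not\subseteq\mu_a$. So the first observation to record is $p\leq a^*\iff \nu\not\subseteq\mu_a$.

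The main bridge I then need is $\nu\subseteq\mu_a\iff\mu_a\in M(\nu^+)$. Since $\nu\in\dois{\alg{A}}$ and $\mu_a\in\um{\alg{A}}$ sit in different storeys, they are distinct; thus $\nu\subseteq\mu_a$ forces $\nu\subsetneq\mu_a$, and because $\nu^+$ is the unique cover of $\nu$ in $\Con{\alg{A}}$, we get $\nu^+\subseteq\mu_a$. The converse is immediate from $\nu\subseteq\nu^+\subseteq\mu_a$. This bridge finishes (1), and at the same time gives the first equivalence of (2), namely $\mu_a\in M(\nu^+)\iff\nu<\mu_a$.

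For the second equivalence of (2), I would invoke Theorem~\ref{thm:cm-summary}(3): for $\nu\in\dois{\alg{A}}$ and $\mu_a\in\um{\alg{A}}$, the congruence inclusion $\nu\subseteq\mu_a$ is equivalent to the filter inclusion $1/\nu\subseteq 1/\mu_a$, which in turn is $\up{p}\subseteq\up{a}$, i.e.\ $a\leq p$. The strict part transfers as well: if $a=p$ then $1/\nu=1/\mu_a$, and Theorem~\ref{thm:cm-regular} would force $\nu=\mu_a$, which is impossible because the two live on different storeys. Hence $\nu<\mu_a\iff a<p$.

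The only point I expect to need care with is keeping strict versus non-strict inclusions aligned; Theorem~\ref{thm:cm-regular} does the delicate work of ruling out $a=p$ whenever $\nu\neq\mu_a$, and everything else is routine translation between prime filters, atoms, and congruences.
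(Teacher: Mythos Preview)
Your argument is correct and follows essentially the same route as the paper: both proofs of (1) hinge on Lemma~\ref{lem:atoms-and-cms}(2) together with the observation $\mu_a\in M(\nu^+)\iff\nu\subseteq\mu_a$, and the first equivalence of (2) is immediate from this. The only noteworthy difference is in the backward direction of the second equivalence in (2): you invoke Theorem~\ref{thm:cm-summary}(3) and Theorem~\ref{thm:cm-regular}, whereas the paper argues the contrapositive directly from part~(1) just proved---if $\nu\not<\mu_a$ then $p\leq a^*$, so $a<p\leq a^*$ forces $a=0$, contradicting that $a$ is an atom. The paper's route is slightly more self-contained, yours packages the work into the earlier structural results; both are equally valid.
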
  

\begin{proof}
For (1), we have 
$$
\mu_a\notin M(\nu^+) \iff \nu\not\subseteq\mu_a
\iff \nu\in\HtM(a^*) \iff a^*\in 1/\nu \iff p\leq a^*
$$
where the second equivalence follows by Lemma~\ref{lem:atoms-and-cms}(2).

For (2), the first equivalence is clear, so we focus on the second.
Forward: we have that $\nu<\mu_a$ implies $1/\nu\subsetneq 1/\mu_a$, so we get
$\up{p}\subsetneq\up{a}$, which in turn implies $a<p$. Backward:
suppose $\nu \not< \mu_a$; then $\mu_a\notin M(\nu^+)$ and so
$p\leq a^*$ by (1); but then $a\leq p \leq a^*$ implying $a=0$ and yielding a
contradiction.  
\end{proof}

The next lemma characterises regular elements of $\alg{A}$ by the behaviour of
congruences from $\cm{\alg{A}}$.

\begin{lemma}\label{lem:tech-for-qid}
Let $\alg{A}\in\var{Pa}$ and $a\in A$. The following are equivalent.
\begin{enumerate}
\item $a^{**} = a$,
\item $\nu\in \HtM(a)\cap\dois{\alg{A}} \iff M(\nu^+)\subseteq
  \HtM(a)\cap\um{\alg{A}}$.
\end{enumerate}  
\end{lemma}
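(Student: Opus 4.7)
The plan is to read~(2) pointwise in $\nu\in\cm{\alg{A}}$, split by the storey of $\nu$, and unpack each side through Lemmas~\ref{lem:mu-plus} and~\ref{lem:gliv-I}.

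For $\nu\in\dois{\alg{A}}$, the inclusion $M(\nu^+)\subseteq\um{\alg{A}}$ holds automatically by Definition~\ref{def:storeys}; and by Birkhoff's theorem $\nu^+=\bigcap M(\nu^+)$, so $M(\nu^+)\subseteq\HtM(a)$ is equivalent to $(a,1)\in\nu^+$. Combined with $\nu\subseteq\nu^+$, the inner biconditional in~(2) collapses to
\[
(a,1)\in\nu^+ \Longrightarrow (a,1)\in\nu,
\]
the converse being automatic. Lemma~\ref{lem:mu-plus}(2) gives $1/\nu^+=1/\nu\cup e_\nu/\nu$, so this implication is equivalent to $a\notin e_\nu/\nu$. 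In $\alg{A}/\nu\cong\sipalg{B}$, Theorem~\ref{thm:si-descr} pinpoints $e_\nu/\nu$ as the unique element distinct from its double pseudo-complement; since ${}^{**}$ commutes with the canonical projection, $a\notin e_\nu/\nu$ is in turn equivalent to $a^{**}/\nu=a/\nu$, i.e.\ $(a,a^{**})\in\nu$.

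For $\nu\in\um{\alg{A}}$ the quotient $\alg{A}/\nu$ is Boolean, so $(a,a^{**})\in\nu$ holds automatically (also immediate from Lemma~\ref{lem:gliv-I}). Combining the two storeys, (2) is equivalent to $(a,a^{**})\in\mu$ for every $\mu\in\cm{\alg{A}}$; by Birkhoff this means $(a,a^{**})\in\bigcap\cm{\alg{A}}=\mathbf{0}^{\alg{A}}$, that is, $a=a^{**}$, giving~(1).

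The only real obstacle is interpretive: one must read~(2) with the (implicit) convention that $M(\nu^+)\subseteq\um{\alg{A}}$ carries non-emptiness, consistent with the disjointness of $\um{\alg{A}}$ and $\dois{\alg{A}}$, so that the $\um{\alg{A}}$ case does not corrupt the biconditional. After that, the proof is a mechanical unfolding via Lemma~\ref{lem:mu-plus}(2), Theorem~\ref{thm:si-descr}, and Lemma~\ref{lem:gliv-I}.
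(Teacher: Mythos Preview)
Your argument is correct and rests on the same pivot as the paper's: for $\nu\in\dois{\alg{A}}$, the condition in~(2) unwinds to ``$a\notin e_\nu/\nu$'', which is precisely $(a,a^{**})\in\nu$ because $e_\nu/\nu$ is the unique non-regular element of $\alg{A}/\nu$. The paper, rather than packaging this as a single chain of equivalences, splits into two implications: for $(1)\Rightarrow(2)$ it argues directly that $a\in e_\nu/\nu$ together with $a=a^{**}$ forces $e_\nu/\nu=1/\nu$; for $(2)\Rightarrow(1)$ it works by contrapositive, picking $\mu\in\dois{\alg{A}}$ with $a^{**}/\mu>a/\mu$ and invoking Lemma~\ref{lem:mu-plus}(3) to place $a$ in $1/\mu^+\setminus 1/\mu$. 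Your route is a bit cleaner in that it handles both directions simultaneously and makes the reduction to Birkhoff explicit, but the underlying content is identical.

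Your interpretive remark is well taken: as literally written, the biconditional in~(2) fails for any $\nu\in\um{\alg{A}}$, since then $M(\nu^+)=\emptyset$ makes the right-hand side vacuously true while the left-hand side is false. The paper's own proof tacitly restricts to $\nu\in\dois{\alg{A}}$ (its backward-implication argument uses $e_\nu$ with $e_\nu^{**}/\nu=1/\nu$, which fails when $e_\nu/\nu=0/\nu$), so your reading is the intended one.
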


\begin{proof}
Assume (1). The forward implication in (2) is obvious. For the backward
implication, assume $(a,1)\in \nu^+$. By Theorem~\ref{thm:orderable-si} we have 
$1/\nu^+ = 1/\nu\cup e_\nu/\nu$. If $a\in e_\nu/\nu$, then
$1/\nu = (e_\nu/\nu)^{**} = e_\nu^{**}/\nu = a^{**}/\nu = a/\nu = e_\nu/\nu$, so
$(1,e_\nu)\in\nu$, which is a contradiction. Thus, $a\in 1/\nu$, that is
$\nu\in\HtM(a)$; hence $\nu\in \HtM(a)\cap\dois{\alg{A}}$. 

Now assume (2), and suppose $a^{**} > a$. Then for some $\mu\in\dois{\alg{A}}$ 
we have $a^{**}/\mu > a/\mu$. By Lemma~\ref{lem:mu-plus} we know that
$a\in 1/\mu^+$, as otherwise we would have $a/\mu=a^{**}/\mu$.
It follows that $M(\mu^+)\subseteq \HtM(a)\cap\um{\alg{A}}$. 
But it also follows that 
$\mu\notin\HtM(a)\cap\dois{\alg{A}}$, contradicting (2).
\end{proof}

\begin{cor}\label{cor:super-glivenko}
Let $\alg{A}\in\var{Pa}$ and $a,b\in A$. If
$\HtM(a^*)\cap\um{\alg{A}} = \HtM(b^*)\cap\um{\alg{A}}$, then $a^* = b^*$.  
\end{cor}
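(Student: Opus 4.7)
The plan is to exploit the fact that every element of the form $a^*$ is automatically regular, since $a^* = a^{***}$ holds in any p-algebra. This lets us apply Lemma~\ref{lem:tech-for-qid} to both $a^*$ and $b^*$, so that their full images under $\HtM$ are determined by the first-storey pieces alone, after which injectivity of $\HtM$ finishes the job.

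More concretely, I would first observe that $(a^*)^{**} = a^{***} = a^*$ (a standard p-algebra identity), and likewise for $b^*$. Hence both $a^*$ and $b^*$ satisfy condition (1) of Lemma~\ref{lem:tech-for-qid}, which gives, for every $\nu\in\dois{\alg{A}}$,
\[
\nu\in\HtM(a^*)\cap\dois{\alg{A}} \iff M(\nu^+)\subseteq\HtM(a^*)\cap\um{\alg{A}},
\]
and analogously with $b^*$ in place of $a^*$. Thus $\HtM(a^*)\cap\dois{\alg{A}}$ depends only on $\HtM(a^*)\cap\um{\alg{A}}$ (via the assignment $\nu \mapsto M(\nu^+)$), and similarly for $b^*$.

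Combining this with the hypothesis $\HtM(a^*)\cap\um{\alg{A}} = \HtM(b^*)\cap\um{\alg{A}}$, I get $\HtM(a^*)\cap\dois{\alg{A}} = \HtM(b^*)\cap\dois{\alg{A}}$. Since $\cm{\alg{A}} = \um{\alg{A}}\cup\dois{\alg{A}}$, these two equalities together yield $\HtM(a^*) = \HtM(b^*)$, and injectivity of $\HtM$ from Lemma~\ref{lem:homomorphism} gives $a^* = b^*$.

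I do not expect any genuine obstacle here: the only nontrivial ingredient is the regularity of $a^*$, which is needed to trigger Lemma~\ref{lem:tech-for-qid}. Once that is in place, the argument is purely bookkeeping across the two storeys of $\cm{\alg{A}}$.
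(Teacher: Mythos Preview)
Your argument is correct and is exactly the intended one: the corollary is stated in the paper without proof precisely because it follows from Lemma~\ref{lem:tech-for-qid} via the regularity of $a^*$ and $b^*$, together with the injectivity of $\HtM$ from Lemma~\ref{lem:homomorphism}, just as you wrote.
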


\begin{lemma}\label{lem:tech-regulars}
Let $\alg{A}\in\var{Pa}$ be finite. Then
$$
R(\alg{A}) = \{\bigl(\bigvee_{a\in S} a\bigr)^{**}: S\subseteq \At{\alg{A}}\}.
$$
\end{lemma}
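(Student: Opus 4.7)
The inclusion $\supseteq$ is the easy half: for any $S \subseteq \At{\alg{A}}$, the element $\bigl(\bigvee_{a\in S} a\bigr)^{**}$ is regular because ${}^{****} = {}^{**}$ holds in every p-algebra. So the real content is the inclusion $\subseteq$.

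The plan is to exploit the fact, noted in the paper, that $R(\alg{A}) = (R(A);\wedge,\sqcup,{}^*,0,1)$ with $x\sqcup y = (x\vee y)^{**}$ is a Boolean algebra. Since $\alg{A}$ is finite, so is $R(\alg{A})$, and hence every $r\in R(\alg{A})$ is a $\sqcup$-join of the atoms of $R(\alg{A})$ lying below it. The two steps are therefore: identify $\At{R(\alg{A})}$, and then translate the Boolean join $\sqcup$ back into the p-algebra join $\vee$.

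For the first step, I will show that $\At{R(\alg{A})} = \{a^{**} : a\in\At{\alg{A}}\}$. That $a^{**}\in R(\alg{A})$ is clear. To see that it is an atom of $R(\alg{A})$, suppose $s\in R(\alg{A})$ with $0<s\leq a^{**}$. The meet $s\wedge a$ lies below the atom $a$, so it is either $a$ or $0$. If $s\wedge a = a$, then $a\leq s$ and hence $a^{**}\leq s^{**} = s\leq a^{**}$, giving $s=a^{**}$; if $s\wedge a = 0$, then $s\leq a^*$, so $s\leq a^*\wedge a^{**} = 0$, a contradiction. Conversely, let $r\in \At{R(\alg{A})}$. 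Since $r>0$ in $\alg{A}$, some atom $a$ of $\alg{A}$ satisfies $a\leq r$; then $a^{**}\leq r^{**} = r$ with $a^{**}>0$, and by atomicity of $r$ in $R(\alg{A})$ we get $r = a^{**}$. Distinct atoms $a,b\in\At{\alg{A}}$ yield distinct $a^{**},b^{**}$ by the same meet-with-an-atom argument.

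For the second step, observe that in $R(\alg{A})$ the join of the family $\{a^{**} : a\in S\}$ is $\bigsqcup_{a\in S} a^{**} = \bigl(\bigvee_{a\in S} a^{**}\bigr)^{**}$, and this equals $\bigl(\bigvee_{a\in S} a\bigr)^{**}$. Indeed, $a\leq a^{**}$ gives one direction, while $a\leq \bigvee_{a\in S} a$ gives $a^{**}\leq \bigl(\bigvee_{a\in S} a\bigr)^{**}$ for every $a\in S$, so $\bigvee_{a\in S} a^{**}\leq \bigl(\bigvee_{a\in S} a\bigr)^{**}$, and one more application of ${}^{**}$ uses ${}^{****} = {}^{**}$ to close the loop. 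Putting the two steps together, every $r\in R(\alg{A})$ has the form $\bigl(\bigvee_{a\in S} a\bigr)^{**}$ for $S = \{a\in\At{\alg{A}} : a^{**}\leq r\}$, which is the desired conclusion.

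There is no real obstacle here; the only thing to be a bit careful about is the conversion from the Boolean join $\sqcup$ on $R(\alg{A})$ to the original $\vee$, but this collapses neatly using $x^{****} = x^{**}$.
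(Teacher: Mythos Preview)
Your proof is correct, and it takes a genuinely different route from the paper's. The paper argues via the congruence-theoretic machinery it has been building: given $r\in R(\alg{A})$, it sets $S=\{a\in\At{\alg{A}}:\mu_a\in\HtM(r)\cap\um{\alg{A}}\}$, shows $\HtM(r)\cap\um{\alg{A}}=\HtM\bigl((\bigvee_{a\in S}a)^{**}\bigr)\cap\um{\alg{A}}$, and then invokes Lemma~\ref{lem:tech-for-qid} (the characterisation of regular elements by the behaviour of congruences in $\dois{\alg{A}}$) to extend this equality to all of $\cm{\alg{A}}$, concluding $r=(\bigvee_{a\in S}a)^{**}$ by injectivity of $\HtM$.

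Your argument is more self-contained and more elementary: it uses only that $R(\alg{A})$ with $\sqcup$ is a finite Boolean algebra, identifies its atoms directly as the $a^{**}$ for $a\in\At{\alg{A}}$ by a short order-theoretic computation, and then collapses the iterated $\sqcup$-join to $(\bigvee_{a\in S}a)^{**}$ using ${}^{****}={}^{**}$. This bypasses the $\HtM$ representation and Lemma~\ref{lem:tech-for-qid} entirely. The paper's approach has the advantage of keeping everything inside the congruence framework that drives the rest of the article (and in particular exercises Lemma~\ref{lem:tech-for-qid}, which is used again later), whereas your approach shows that this particular lemma needs nothing beyond the Glivenko--style facts about $R(\alg{A})$ already recorded before the lemma. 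Both choices of $S$ agree, since $a^{**}\leq r$ iff $a\leq r$ iff $\mu_a\in\HtM(r)$ for an atom $a$.
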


\begin{proof}
To show that 
$R(\alg{A})\subseteq \{(\bigvee_{a\in S} a)^{**}: S\subseteq \At{\alg{A}}\}$
take $r\in R(\alg{A})$. Using Lemma~\ref{lem:atoms}
put $S\deq \{a\in\At{\alg{A}}: \mu_a\in \HtM(r)\cap\um{\alg{A}}\}$ and note that
by Lemma~\ref{lem:atoms} we have
$$
\HtM(r)\cap\um{\alg{A}} = \bigcup_{a\in S}\HtM(a) \cap\um{\alg{A}} =
\HtM(\bigvee_{a\in S}a)\cap\um{\alg{A}}
= \HtM\bigl((\bigvee_{a\in S}a)^{**}\bigr)\cap\um{\alg{A}}
$$
where the last equality holds
because $(u, u^{**})\in \mu$ for every $\mu\in\um{\alg{A}}$. Then, by
Lemma~\ref{lem:tech-for-qid}(2) we obtain
\begin{align*}
\nu\in\HtM(r)\cap\dois{\alg{A}}
&\iff
M(\nu^+)\subseteq \HtM(r)\cap\um{\alg{A}}\\
&\iff 
M(\nu^+)\subseteq \HtM\bigl((\bigvee_{a\in S}a)^{**}\bigr)\cap\um{\alg{A}}\\
&\iff
\nu\in\HtM\bigl((\bigvee_{a\in S}a)^{**}\bigr)\cap\dois{\alg{A}}  
\end{align*}
Therefore, $\HtM(r) = \HtM\bigl((\bigvee_{a\in S}a)^{**}\bigr)$, and
consequently $r = (\bigvee_{a\in S}a)^{**}$. The backward inclusion is obvious. 
\end{proof}

\section{Free algebras}
Free p-algebras were studied in~\cite{Urq73}, \cite{BD75}, and~\cite{Kat98}
as we already mentioned. We provide yet another description of free p-algebras which
we believe has two major advantages: (i) it is obtained by purely universal
algebraic means, namely, by analysing the structure of completely meet-irreducible
congruences, and (ii) it yields a normal form theorem.

By local finiteness, to obtain a description of all free p-algebras it suffices
to describe the finitely generated ones, so in this section
we fix a finite set $X = \{x_1,\dots,x_k\}$ of free generators (i.e., variables)
and let $\alg{F}_n(k)$ be the p-algebra generated by $X$, free in $\var{Pa}_n$.
By a similar slight abuse of notation we write $\pw(k)$ for $\pw(X)$. 
We begin by describing the atoms of $\alg{F}_n(k)$. 
Let $T\in\pw(k)$, define $f_T:\{x_1,\dots,x_k\}\to \{0,1\}$
putting
\begin{equation}\label{eq:fT}\tag{\P}
f_T(x_i) \deq\begin{cases}
            1 & \text{ if } i\in T\\
            0 & \text{ if } i\notin T,
\end{cases}
\end{equation}
and let $\bar{f}_T$ be the homomorphism onto $\alg{2}$ extending $f_T$.
Obviously for any $T\in\pw(k)$ and any $\mu\in \cm{\alg{F}_n(k)}$ we have
$$  
\mu\in \mathrm{I}_{\alg{F}_n(k)} \iff \mu = \ker{\bar{f}_T}.
$$
Next, define
\begin{equation}\label{eq:at}\tag{at}
x_T \deq \bigwedge_{i\in T}x_i\wedge \bigwedge_{i\notin T} x_i^*.
\end{equation}
The result below, whose first part can also be found in~\cite{BD75},
is immediate from Lemma~\ref{lem:atoms}.

\begin{cor}\label{cor:free-atoms}
For any $T\in\pw(k)$, the element $x_T$ is an atom and
every atom of\/ $\alg{F}_n(k)$ is of this form.
Therefore, if $\mu\in\um{\alg{F}_n(k)}$ then $1/\mu = \up{x_T}$ for some
$T\subseteq k$. Furthermore, $D(\alg{F}_n(k))  = \up{\bigvee_{T\in\pw(k)}x_T}$.
\end{cor}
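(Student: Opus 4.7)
The plan is to identify each $x_T$ with the unique atom $a_T$ of $\alg{F}_n(k)$ whose upset is $1/\ker\bar{f}_T$. By the observation preceding the corollary, $\um{\alg{F}_n(k)} = \{\ker\bar{f}_T : T\in\pw(k)\}$, and Lemma~\ref{lem:atoms} gives a bijection between this set and $\At{\alg{F}_n(k)}$ sending $\ker\bar{f}_T$ to the unique atom $a_T$ with $\up{a_T} = 1/\ker\bar{f}_T$. So the first sentence reduces to showing $x_T = a_T$ for each $T$.

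The key step is to compute $\HtM(x_T) = \{\ker\bar{f}_T\}$. The inclusion $\supseteq$ is immediate since $\bar{f}_T(x_T) = 1$. For the other direction, take $\nu\in\HtM(x_T)$; then $1/\nu$ is a filter containing the meet $x_T = \bigwedge_{i\in T}x_i\wedge\bigwedge_{i\notin T}x_i^*$, so every conjunct lies in $1/\nu$, giving $x_i/\nu = 1/\nu$ for $i\in T$ and $x_i^*/\nu = 1/\nu$ for $i\notin T$. By Theorem~\ref{thm:si-descr}, $\alg{F}_n(k)/\nu\cong\sipalg{B}$ for some Boolean $\alg{B}$, and pseudocomplementation there satisfies $y^* = 1$ iff $y = 0$; hence $x_i/\nu = 0/\nu$ for $i\notin T$. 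All generators thus map into the two-element subalgebra $\{0/\nu,1/\nu\}$, which forces $\alg{F}_n(k)/\nu\cong\alg{2}$ and so $\nu\in\um{\alg{F}_n(k)}$. Matching generator values then pins $\nu = \ker\bar{f}_T$.

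Since Lemma~\ref{lem:atoms-and-cms}(1) also gives $\HtM(a_T) = \{\ker\bar{f}_T\}$, injectivity of $\HtM$ (Lemma~\ref{lem:homomorphism}) yields $x_T = a_T$. This proves the first sentence. The second sentence is then immediate: for $\mu\in\um{\alg{F}_n(k)}$ write $\mu = \ker\bar{f}_T$ and apply Lemma~\ref{lem:atoms} to obtain $1/\mu = \up{a_T} = \up{x_T}$. The last sentence follows by applying Lemma~\ref{lem:gliv-atoms} to the atoms $\{x_T : T\in\pw(k)\}$ just identified.

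The only nontrivial step is the quotient-pinning argument for $\HtM(x_T)$: one must exploit that a pseudocomplement is $1$ only at $0$ in a subdirectly irreducible quotient, in order to rule out any $\nu\in\dois{\alg{F}_n(k)}$ from $\HtM(x_T)$. Everything else is routine assembly of earlier results.
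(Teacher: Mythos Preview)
Your proof is correct and follows the same approach the paper intends: the paper's proof is the single line ``immediate from Lemma~\ref{lem:atoms}'', and what you have written is precisely the natural unpacking of that line, using the bijection between $\um{\alg{F}_n(k)}$ and $\At{\alg{F}_n(k)}$ and verifying that $x_T$ is the atom corresponding to $\ker\bar{f}_T$ by computing $\HtM(x_T)$. The only difference is that you make explicit the step the paper leaves to the reader, namely that all generators land in $\{0/\nu,1/\nu\}$ and hence the quotient is $\alg{2}$.
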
  

By Lemma~\ref{lem:atoms-and-cms}, the set $\HtM(x_T)$ is a singleton,
which according to our notational convention we would write as
$\mu_{x_T}$, but to lighten the notation we will write $\mu_T$.

Next we will 
characterise all non-atomic members of $\J{\alg{F}_n(k)}$.
By Corollary~\ref{cor:cm-into-jirr}, each element $p\in \J{\alg{F}_n(k)}$
is the smallest element of $1/\mu$ for some $\mu \in\cm{\alg{F}_n(k)}$.
Since we already characterised atoms, we can assume that $p$ is such that
$\up{p} = 1/\mu$ for some $\mu\in\dois{\alg{F}_n(k)}$. For this $\mu$ we define
\begin{align}
L &\deq \{i\in k: x_i\in 1/\mu\},\label{eq:aux-L}\tag{L}\\ 
\mathcal{T} &\deq \{T\in\pw(k): \mu \subseteq \mu_T\},\label{eq:aux-T}\tag{T}\\
p^L_\mathcal{T} &\deq \bigl(\bigvee_{T\in\mathcal{T}}x_T\bigr)^{**}\wedge
  \bigwedge_{i\in L} x_i.\label{eq:n-at}\tag{n-at}
\end{align}
Intuitively, $L$ encodes the set of generators that $\mu$ maps to $1$, and
$\mathcal{T}$ encodes the set of maximal congruences
extending $\mu$. Equivalently,
$\bigcap_{T\in \mathcal{T}}\mu_T = \mu^+$. Note that
$\mathcal{T}$ above must satisfy $|\mathcal{T}|\leq n$, as subdirectly
irreducible algebras in $\var{Pa}_n$ has at most $n$ atoms. Consequently,
$\alg{F}_n(k)/\mu \cong \Bnalg{s}$, for $s\leq n$, and so
$\alg{F}_n(k)/\mu^+\cong \alg{2}^s$.  
Note also that $L\subseteq \bigcap\mathcal{T}$, as for any $i\in L$ we have
$x_i\in 1/\mu\subseteq 1/\mu_T = \up{x_T}$, so $i\in T$ for each $T\in \mathcal{T}$.
Then, we obtain the following crucial result.

\begin{lemma}\label{lem:2nd-storey}
  Let $\mu\in\mathrm{II}_{\alg{F}_n(k)}$. Then,
$1/\mu = \up{p_\mathcal{T}^L}$, with $\mathcal{T}$ and $L$ as
  above.
\end{lemma}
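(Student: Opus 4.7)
The plan splits into the two inclusions of the asserted equality. For $\up{p_\mathcal{T}^L} \subseteq 1/\mu$ I would verify $p_\mathcal{T}^L \in 1/\mu$ directly in $\alg{F}_n(k)/\mu \cong \Bnalg{s}$: the definition of $L$ gives $x_i/\mu = 1$ for $i \in L$, so $z := \bigwedge_{i \in L} x_i \in 1/\mu$; and for each $T \in \mathcal{T}$, $x_T/\mu$ is the atom of the Boolean part of $\Bnalg{s}$ attached to the coatomic congruence $\mu_T/\mu$, so $\bigvee_{T \in \mathcal{T}} x_T/\mu = e_\mu/\mu$ and hence $(\bigvee_{T \in \mathcal{T}} x_T)^{**}/\mu = e_\mu^{**}/\mu = 1/\mu$. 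Writing $y := \bigvee_{T \in \mathcal{T}} x_T$, the meet $p_\mathcal{T}^L = y^{**} \wedge z$ is therefore in $1/\mu$, and the inclusion follows because $1/\mu$ is a filter.

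For the reverse inclusion $1/\mu \subseteq \up{p_\mathcal{T}^L}$, let $p_\mu$ be the unique join-irreducible element with $1/\mu = \up{p_\mu}$ provided by Corollary~\ref{cor:cm-into-jirr}; the forward direction already gives $p_\mu \leq p_\mathcal{T}^L$, so what remains is the reverse $p_\mathcal{T}^L \leq p_\mu$. Lemma~\ref{lem:second-storey}(2) applied to each atom $x_S$ yields $x_T < p_\mu$ for every $T \in \mathcal{T}$, so $y \leq p_\mu$; item (1) of the same lemma yields $p_\mu \leq x_S^*$ for $S \notin \mathcal{T}$, and since in the Boolean algebra $R(\alg{F}_n(k))$ of regulars the elements $y^{**}$ and $(\bigvee_{S \notin \mathcal{T}} x_S)^{**}$ are complementary, this tightens to $p_\mu \leq y^{**}$. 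Also $p_\mu \leq x_i$ for $i \in L$ because $x_i \in \up{p_\mu}$, so $p_\mu \leq z$, which recovers $p_\mu \leq p_\mathcal{T}^L$ from this second direction as well. Corollary~\ref{cor:super-glivenko} then supplies $p_\mu^{**} = y^{**}$ (since $p_\mu^*$ and $y^*$ have the same trace on $\um{\alg{F}_n(k)}$), hence $p_\mu^* = y^*$; substituting into the universal p-algebra identity $a = a^{**} \wedge (a \vee a^*)$ gives $p_\mu = y^{**} \wedge (p_\mu \vee y^*)$. Using $y^{**} \wedge y^* = 0$, the target equality $p_\mu = y^{**} \wedge z$ thus reduces to showing $z \leq p_\mu \vee y^*$.

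The genuine obstacle is this last inequality $z \leq p_\mu \vee y^*$, which is the algebraic shadow of the assertion that in the $\leq^{\Cm{}}$-order on $\cm{\alg{F}_n(k)}$ the pair $(\mathcal{T}, L)$ recovers $\mu$ as the unique $\dois{\alg{F}_n(k)}$-minimum with the corresponding profile. I would verify it coordinate-wise in the subdirect embedding $\alg{F}_n(k) \hookrightarrow \prod_{\rho \in \cm{\alg{F}_n(k)}} \alg{F}_n(k)/\rho$: factors $\rho \in \um{\alg{F}_n(k)}$ are dispatched from the explicit descriptions of $x_T, x_i, y^*$ in $\alg{2}$; the factor $\rho = \mu$ is immediate; and the factors $\rho \in \dois{\alg{F}_n(k)}$ split along whether $\mathcal{T}_\rho \cap \mathcal{T}$ is empty (where $y^*/\rho = 1$ trivialises the inequality) and whether $L \subseteq L_\rho$ (where $z/\rho \leq e_\rho \leq p_\mu/\rho$ when it fails). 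The residual critical subcase is $\mathcal{T}_\rho \subseteq \mathcal{T}$, $L \subseteq L_\rho$, and $\rho \neq \mu$; here one must argue $1/\mu \subseteq 1/\rho$, which I expect to follow by observing that $(\mathcal{T}_\mu, L_\mu)$ fully determines the images of the generators $x_i$ in $\Bnalg{s}$ and hence $\mu$ itself, combined with Theorem~\ref{thm:cm-regular}.
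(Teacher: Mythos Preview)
Your forward inclusion $\up{p_\mathcal{T}^L}\subseteq 1/\mu$ is correct and essentially the same as the paper's: one checks directly that $p_\mathcal{T}^L\in 1/\mu$ by computing in the quotient $\Bnalg{s}$.

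The reverse inclusion, however, contains a genuine error. Your reduction to the inequality $z\leq p_\mu\vee y^*$ is a sufficient condition, but that inequality is \emph{false} in general. Take $k=1$, $n\geq 2$, and the congruence $\mu\in\dois{\alg{F}_n(1)}$ with $\mathcal{T}=\{\{0\}\}$ and $L=\emptyset$ (so $p_\mu=x^{**}$, $y=x$, $z=1$). Then $p_\mu\vee y^* = x^{**}\vee x^*$, which in $\alg{F}_n(1)$ for $n\geq 2$ is strictly below~$1=z$ (see Example~\ref{ex:1-generated}). So the coordinate-wise verification you propose cannot succeed. A related symptom: in your case $L\not\subseteq L_\rho$ you assert $e_\rho/\rho\leq p_\mu/\rho$, but if some $S\in\mathcal{T}_\rho\setminus\mathcal{T}$ exists, then $p_\mu\leq x_S^*$ forces $p_\mu/\rho$ to lie strictly below $e_\rho/\rho$ in the Boolean part of $\Bnalg{|\mathcal{T}_\rho|}$.

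The paper's argument for $p_\mathcal{T}^L\leq p_\mu$ is entirely different and much shorter: it proves $p_\mathcal{T}^L\leq t$ for every \emph{term} $t$ with $(1,t)\in\mu$, by induction on the complexity of $t$. The cases $t=x_i$, $t_1\wedge t_2$, $t_1\vee t_2$ are immediate from the definition of $L$ and primeness of $1/\mu$. The crucial case $t=w^*$ exploits that $t$ is then regular: from $\{\mu_T:T\in\mathcal{T}\}\subseteq\HtM(t)$ one gets $y\leq t$, hence $y^{**}\leq t^{**}=t$, and so $p_\mathcal{T}^L\leq y^{**}\leq t$. This structural induction is precisely what makes the two conjuncts of $p_\mathcal{T}^L$ --- the meet of generators and the double-starred join of atoms --- the right shape.
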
  

\begin{proof}
As we already noted, 
$\{\mu_T: T\in\mathcal{T}\} = M(\mu^+)$. 
Moreover $0 < |\mathcal{T}|\leq n$, and
for some $p\in\J{\alg{A}}$ we have $1/\mu = \up{p}$

For an arbitrary term $t$, we show that  
$(1,t)\in \mu$ implies $p_\mathcal{T}^L\leq t$. We proceed by induction
on complexity of $t$.
For the base case, that is, if $t = x_i$ for some generator $x_i$,
the claim holds by definition of $p_\mathcal{T}^L$.
The inductive step splits into three cases.
\begin{itemize}  
\item[(i)] If $t = t_1\wedge t_2$, we have immediately $(1,t_1)\in 1/\mu$ and
$(1,t_2)\in 1/\mu$, so by the inductive hypothesis
$p_\mathcal{T}^L\leq t_1$ and $p_\mathcal{T}^L\leq t_2$. Hence,
$p_\mathcal{T}^L\leq t$.
\item[(ii)] If $t = t_1\vee t_2$, then by primeness of $1/\mu$ we get that
$t_1\in 1/\mu$ or $t_2\in 1/\mu$. The inductive hypothesis then yields
$p_\mathcal{T}^L\leq t_1$ or $p_\mathcal{T}^L\leq t_2$, hence
$p_\mathcal{T}^L\leq t$.
\item[(iii)] Finally, assume $t = w^*$. Since $(1,t)\in \mu$, we have
$(1,t)\in\mu_T$ for all $T\in\mathcal{T}$, and so
by Lemmas~\ref{lem:homomorphism} and~\ref{lem:atoms-and-cms}(1) we get  
$\HtM(\bigvee_{T\in\mathcal{T}} x_T) = \{\mu_T: T\in\mathcal{T}\} \subseteq \HtM(t)$.
It follows that
$\bigvee_{T\in\mathcal{T}} x_T\leq t$, and thus
$(\bigvee_{T\in\mathcal{T}} x_T)^{**}\leq t^{**} = w^{***} = w^* = t$.
Hence $p_\mathcal{T}^L\leq t$; in particular $p_\mathcal{T}^L\leq p$.
\end{itemize}

On the other hand, consider 
$p_\mathcal{T}^L/\mu = (\bigvee_{T\in\mathcal{T}}x_T)^{**}/\mu\wedge
\bigwedge_{i\in L} x_i/\mu$. 
By definition of $L$ we have
\begin{equation}\label{eq:mu}\tag{\dag}
\bigwedge_{i\in L} x_i/\mu = 1/\mu.
\end{equation}
Moreover,
\begin{equation}\label{eq:muplus}\tag{\ddag}
1/\mu^+ = \bigcap_{T\in\mathcal{T}} 1/\mu_T = \bigcap_{T\in\mathcal{T}} \up{x_T}
= \up{\bigvee_{T\in\mathcal{T}} x_T}.
\end{equation}
Then, by Lemma~\ref{lem:mu-plus}(2)
$$
1/\mu^+ = 1/\mu\cup e_\mu/\mu 
$$
and by~\eqref{eq:muplus} we have
$\bigvee_{T\in\mathcal{T}} x_T\in 1/\mu^+\setminus 1/\mu$.
Thus, $\bigvee_{T\in\mathcal{T}} x_T\in e_\mu/\mu$, and so
$$
(\bigvee_{T\in\mathcal{T}} x_T)^{**}\in e_\mu^{**}/\mu = 1/\mu. 
$$
Combining this with~\eqref{eq:mu} we obtain
$p_\mathcal{T}^L\in 1/\mu$, so $p_\mathcal{T}^L\geq p$. Therefore
$\up{p_\mathcal{T}^L} = 1/\mu$ as required. 
\end{proof}

\begin{cor}\label{cor:jirr-p}
For every $p\in\J{\alg{F}_n(k)}$, we have
$p = p^L_\mathcal{T}$ for some nonempty $\mathcal{T}\subseteq \pw(k)$ with
$|\mathcal{T}|\leq n$ and some $L\subseteq\bigcap \mathcal{T}$.
\end{cor}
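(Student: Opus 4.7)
My plan is to derive the corollary by combining Corollary~\ref{cor:cm-into-jirr} with the two preceding characterisations of join-irreducibles (one for atoms, one for non-atoms). By Corollary~\ref{cor:cm-into-jirr}, every $p\in\J{\alg{F}_n(k)}$ is $\bigwedge\Phi(\mu)$ for a unique $\mu\in\cm{\alg{F}_n(k)}$, so $\up{p} = 1/\mu$. I will split on whether $\mu\in\um{\alg{F}_n(k)}$ or $\mu\in\dois{\alg{F}_n(k)}$ and exhibit the required $\mathcal{T}$ and $L$ in each case.

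In the type~$\um{}$ case, Corollary~\ref{cor:free-atoms} tells me that $p = x_T$ for some $T\subseteq k$. I will take $\mathcal{T}\deq \{T\}$ and $L\deq T$; then $|\mathcal{T}| = 1$, $L\subseteq\bigcap\mathcal{T} = T$, and it remains to verify $x_T = x_T^{**}\wedge \bigwedge_{i\in T}x_i$. The inequality $x_T\leq x_T^{**}\wedge \bigwedge_{i\in T}x_i$ is immediate from the definition $x_T = \bigwedge_{i\in T}x_i\wedge\bigwedge_{j\notin T}x_j^*$. For the reverse, note that $x_T\leq x_j^*$ for every $j\notin T$, whence $x_j\leq x_T^*$ and therefore $x_T^{**}\leq x_j^*$ by monotonicity of ${}^{**}$ on complements; intersecting over $j\notin T$ together with the factors indexed by $T$ yields $x_T^{**}\wedge\bigwedge_{i\in T}x_i\leq x_T$, as required.

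In the type~$\dois{}$ case, Lemma~\ref{lem:2nd-storey} does all the heavy lifting: setting $L$ and $\mathcal{T}$ as in \eqref{eq:aux-L} and \eqref{eq:aux-T}, it gives $\up{p} = 1/\mu = \up{p^L_\mathcal{T}}$, so $p = p^L_\mathcal{T}$. For the constraints: $\mathcal{T}$ is nonempty because $M(\mu^+) = \{\mu_T : T\in\mathcal{T}\}$ is nonempty (every congruence lies below a maximal one, and $\mu^+\neq\mathbf{1}$ lies strictly below the total congruence); the bound $|\mathcal{T}|\leq n$ is forced because $|\mathcal{T}|$ equals the number of atoms of the Boolean algebra $\alg{F}_n(k)/\mu^+$, which in turn equals the number of atoms of the Boolean part of $\alg{F}_n(k)/\mu\cong\Bnalg{s}$, and $s\leq n$ since $\alg{F}_n(k)\in\var{Pa}_n$ and Theorem~\ref{thm:var-descr} limits subdirectly irreducible quotients accordingly; finally, $L\subseteq\bigcap\mathcal{T}$ was already observed in the paragraph introducing~\eqref{eq:n-at}.

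There is no real obstacle here: all the substance is in Lemma~\ref{lem:2nd-storey}, and the corollary only asks me to consolidate that lemma with the atom case, plus a routine verification that the single-atom expression $x_T$ falls under the uniform normal form $p^L_\mathcal{T}$. The only minor subtlety worth flagging is the identity $x_T = x_T^{**}\wedge\bigwedge_{i\in T}x_i$, which might seem to depend on whether $x_T$ is regular, but in fact follows purely from $x_T\leq x_j^*$ for $j\notin T$.
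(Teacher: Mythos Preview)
Your proposal is correct and follows essentially the same approach as the paper: both split into the atom case (showing $x_T = p^T_{\{T\}}$) and the non-atom case (invoking Lemma~\ref{lem:2nd-storey}). The only cosmetic difference is in the verification of $x_T = x_T^{**}\wedge\bigwedge_{i\in T}x_i$: the paper computes $x_T^{**} = \bigwedge_{i\in T}x_i^{**}\wedge\bigwedge_{j\notin T}x_j^{*}$ directly using that ${}^{**}$ distributes over finite meets and $x_j^{***}=x_j^*$, whereas you argue via antitonicity of ${}^*$ from $x_j\leq x_T^*$ to $x_T^{**}\leq x_j^*$---both are routine and equivalent.
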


\begin{proof}
For every $\mu\in\cm{\alg{F}_n(k)}$ there exists an element
$p\in\J{\alg{F}_n(k)}$ such that $1/\mu = \up{p}$.
In view of Lemma~\ref{lem:2nd-storey},
all we need to prove is that each $x_T$ defined in~\eqref{eq:at} is of the
desired form. 
Take any $T\subseteq k$ and put $\mathcal{T} = \{T\}$. Then we have
\begin{align*}
p^T_{\mathcal{T}}
&= x_T^{**}\wedge \bigwedge_{i\in T} x_i = 
\Bigl(\bigwedge_{i\in T}x_i\wedge \bigwedge_{i\notin T} x_i^*\Bigr)^{**}
\wedge \bigwedge_{i\in T} x_i\\
&= \bigwedge_{i\in T}x_i^{**}\wedge \bigwedge_{i\notin T} x_i^*
\wedge \bigwedge_{i\in T} x_i =
\bigwedge_{i\notin T} x_i^*\wedge \bigwedge_{i\in T} x_i = x_T
\end{align*}
as claimed.
\end{proof}  

Now, to complete our description of free p-algebras we need to show
that for every $\mathcal{T}\subseteq\pw(k)$ and every
$L\subseteq\bigcap\mathcal{T}$ there exists a $\mu\in\cm{\alg{F}_n(k)}$
such that $M(\mu^+) = \{\mu_T: T\in\mathcal{T}\}$, or equivalently, that there
exist $p\in \J{\alg{F}_n(k)}$ such that $p = p^L_{\mathcal{T}}$. 
By Corollary~\ref{cor:free-atoms} we already know this
for atoms, so let $\mathcal{T}\subseteq \pw(k)$ be such that
$0<|\mathcal{T}|\leq n$. Further, let $L\subseteq \bigcap\mathcal{T}$ be
such that $L\neq\bigcap\mathcal{T}$ if $|\mathcal{T}| = 1$. We will see the
significance of the last condition shortly.  

\begin{lemma}\label{lem:suitable-p}
Let $\mathcal{T}$ and $L$ be as above. Then there exists
$\mu\in\dois{\alg{F}_n(k)}$ such that $M(\mu^+) = \{\mu_T: T\in\mathcal{T}\}$
and $x_i\in 1/\mu \iff i\in L$.  
\end{lemma}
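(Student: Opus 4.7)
My plan is to construct $\mu$ as the kernel of a surjective homomorphism $h\colon\alg{F}_n(k)\to\Bnalg{s}$, where $s\deq|\mathcal{T}|$; since $s\leq n$ by hypothesis, $\Bnalg{s}\in\var{Pa}_n$, so any assignment on generators extends by freeness. Let $\{a_T:T\in\mathcal{T}\}$ enumerate the atoms of the Boolean part $\alg{B}_s$ of $\Bnalg{s}$, with $e=\bigvee_{T\in\mathcal{T}}a_T$ its subcover of $1$. Define
$$
h(x_i) \deq \begin{cases}
1 & \text{if } i\in L,\\
e & \text{if } i\in \bigcap\mathcal{T}\setminus L,\\
\bigvee_{T\in\mathcal{T},\, i\in T} a_T & \text{if } i\notin \bigcap\mathcal{T},
\end{cases}
$$
and set $\mu\deq\ker h$. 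The three cases are consistent because $L\subseteq\bigcap\mathcal{T}$.

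First I would verify that $\mu\subseteq\mu_T$ for every $T\in\mathcal{T}$. Composing $h$ with the canonical p-algebra map $\Bnalg{s}\to\alg{2}$ that collapses $1$ to $e$ and then projects $\alg{B}_s\cong\alg{2}^s$ onto the $T$-th coordinate should yield, by a routine case check on the three clauses, exactly $\bar{f}_T$ on generators; hence $\ker h\subseteq\ker\bar{f}_T=\mu_T$.

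Next I would establish surjectivity of $h$. Evaluating $h$ on the atom terms $x_T=\bigwedge_{i\in T}x_i\wedge\bigwedge_{i\notin T}x_i^*$ from~\eqref{eq:at}, a direct computation using the definition of $h$ gives $h(x_T)=a_T$ when $s\geq 2$, and $h(x_{T_0})=e$ when $s=1$ (here the hypothesis $L\subsetneq T_0$ forces some generator into the second clause, so the meet reaches $e$ rather than $1$). This puts all atoms, and hence all of $\alg{B}_s$ including $e$, in the image; the remaining top element $1\in\Bnalg{s}$ arises either as $h(x_i)$ for some $i\in L$, or otherwise as $e^{**}$. Surjectivity yields $\alg{F}_n(k)/\mu\cong\Bnalg{s}$, which is subdirectly irreducible with $s\geq 1$, so $\mu\in\dois{\alg{F}_n(k)}$. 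The main obstacle lies here: the boundary case $|\mathcal{T}|=1$ is exactly where the standing exclusion $L\neq\bigcap\mathcal{T}$ becomes indispensable, since without it the image of $h$ would collapse to $\alg{2}$ and $\mu$ would equal $\mu_{T_0}\in\um{\alg{F}_n(k)}$.

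The remaining verifications should be quick. Since $\Bnalg{s}$ has monolith quotient $\alg{B}_s\cong\alg{2}^s$ (by Lemma~\ref{lem:mu-plus}(4)), the unique cover $\mu^+$ satisfies $\alg{F}_n(k)/\mu^+\cong\alg{2}^s$; its $s$ distinct maximal congruences, pulled back along $h$, are contained in $\{\mu_T:T\in\mathcal{T}\}$ by the first step, and a cardinality count gives equality $M(\mu^+)=\{\mu_T:T\in\mathcal{T}\}$. Finally, $x_i\in 1/\mu \iff h(x_i)=1 \iff i\in L$ is immediate from the definition of $h$, completing the proof.
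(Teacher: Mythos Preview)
Your proof is correct and follows essentially the same approach as the paper: both construct $\mu$ as the kernel of a surjective homomorphism $\alg{F}_n(k)\to\Bnalg{s}$ defined on generators, verify surjectivity by evaluating on the atom terms $x_T$, and treat the $s=1$ case separately using the hypothesis $L\subsetneq\bigcap\mathcal{T}$. Your three-clause definition of $h$ coincides with the paper's two-clause map $g^L_{\mathcal{T}}$ (the middle clause is what $(f_{T_1}(x_i),\dots,f_{T_s}(x_i))$ reduces to when $i\in\bigcap\mathcal{T}\setminus L$), and your direct cardinality argument for $M(\mu^+)=\{\mu_T:T\in\mathcal{T}\}$ replaces the paper's appeal to Lemma~\ref{lem:second-storey}.
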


\begin{proof}
Let $\mathcal{T} = \{T_1,\dots, T_s\}$ for $s\leq n$. Consider a map
$g^L_\mathcal{T}\colon \{x_1,\dots,x_k\}\to\Bnalg{s}$ given by
$$
g^L_\mathcal{T}(x_i) \deq\begin{cases}
            1 & \text{ if } i\in L\\
 \bigl(f_{T_1}(x_i),\dots, f_{T_s}(x_i)\bigr) & \text{ if } i\notin L
\end{cases}
$$
with $f_T$ from~\eqref{eq:fT}. Recall that $\Bnalg{s}$ is
$\overline{\alg{2}^s}$ so $g^L_\mathcal{T}$ is well defined.
We claim that $\bar{g}^L_\mathcal{T}\colon \alg{F}_n(k)\to \Bnalg{s}$ is a surjective
homomorphism.

To see it, first assume $s>1$, and then let $j\in\{1,\dots, s\}$
and consider an atom $x_{T_j}$. We have $\bar{g}^L_\mathcal{T}(x_{T_j}) =
(0,\dots,0,1,0,\dots,0)$ with $1$ only at coordinate $j$, so taking joins we see that
every element of $\alg{B}_s$ is in the image of $\bar{g}^L_\mathcal{T}$,
including $e = (1,\dots,1) = \bar{g}^L_\mathcal{T}(\bigvee_{j=1}^{s}x_{T_j})$, since
$s>1$.

The case $s = 1$ has to be treated specially. For if $s=1$, then
$\mathcal{T} = \{T\}$, and to have the range of $\bar{g}^L_\mathcal{T}$ equal
$\{0,e,1\}$ we need at least one $j\in k$ with $g^L_{\{T\}}(x_j) = e$.
By definitions of $g^L_{\mathcal{T}}$ and $f_{T}$, it holds if and only if
$j\in T\setminus L$. 
 
Put $\mu = \ker \bar{g}^L_\mathcal{T}$. Then,
$\mu \in\dois{\alg{F}_n(k)}$ and,
by Lemma~\ref{lem:second-storey}(1,3), we get $M(\mu^+) = \{\mu_{T_1},\dots,\mu_{T_s}\}$.
Obviously, $\bar{g}^L_\mathcal{T}(x_i) = 1$ if and only if $i\in L$,
so $\mu$ is the desired congruence.
\end{proof}  

Apart from its role in the proof above, the condition
$|\mathcal{T}| = 1\implies L\neq\bigcap\mathcal{T}$ ensures that
the appropriate congruence $\mu$ belongs to $\dois{\alg{F}_n(k)}$. Indeed,
taking $\mathcal{T} = \{T\}$ for any $T\subseteq k$, we obtain that
$g^T_\mathcal{T}$ is identical to $f_T$ of \eqref{eq:fT}, and therefore
$\ker\bar{g}^T_{\{T\}} = \ker\bar{f}_T = \mu_T$. From now on, for any 
$\mathcal{T}\subseteq \pw(k)$ with $0<|\mathcal{T}|\leq n$, 
and any $L\subseteq \bigcap\mathcal{T}$, we will write $\mu^L_\mathcal{T}$ for the
appropriate congruence. In particular, our notation gives $\mu^T_{\{T\}} = \mu_T$.

Let us summarise what we know about completely meet-irreducible congruences and
join-irreducible elements.

\begin{cor}\label{cor:cm-and-jirr}
The following hold for the free p-algebra $\alg{F}_n(k)$: 
\begin{enumerate}
\item $\cm{\alg{F}_n(k)} =
  \{\mu^L_{\mathcal{T}}: \mathcal{T}\subseteq \pw(k),\
  0<|\mathcal{T}|\leq n,\ L\subseteq \bigcap\mathcal{T}\}$,
\item $\J{\alg{F}_n(k)} =
  \{p^L_{\mathcal{T}}: \mathcal{T}\subseteq \pw(k),\
  0<|\mathcal{T}|\leq n,\ L\subseteq \bigcap\mathcal{T}\}$.  
\end{enumerate}
\end{cor}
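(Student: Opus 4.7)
The approach is to combine existence results (Lemma~\ref{lem:suitable-p} and Corollary~\ref{cor:free-atoms}) with the structural description of join-irreducibles and completely meet-irreducibles (Corollary~\ref{cor:jirr-p} and Lemma~\ref{lem:2nd-storey}), and to tie the two items together through the bijection $\Psi$ of Corollary~\ref{cor:cm-into-jirr}.

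For part (1), the plan is to partition $\cm{\alg{F}_n(k)} = \um{\alg{F}_n(k)} \cup \dois{\alg{F}_n(k)}$ and parametrize each stratum. By Corollary~\ref{cor:free-atoms}, elements of $\um{\alg{F}_n(k)}$ are in bijection with atoms $x_T$, $T \in \pw(k)$, and by our conventions $\mu_T = \mu^T_{\{T\}}$; so these are covered by the index pairs $(\mathcal{T}, L) = (\{T\}, T)$. For $\dois{\alg{F}_n(k)}$, any $\mu$ there satisfies $\alg{F}_n(k)/\mu^+ \cong \alg{2}^s$ for some $1 \leq s \leq n$, so $M(\mu^+) = \{\mu_T : T \in \mathcal{T}\}$ for a unique $\mathcal{T}$ with $0 < |\mathcal{T}| \leq n$; setting $L = \{i : x_i \in 1/\mu\}$ gives $L \subseteq \bigcap\mathcal{T}$, and Lemma~\ref{lem:2nd-storey} shows $1/\mu = \up{p^L_\mathcal{T}}$, so $\mu$ is determined as $\mu^L_\mathcal{T}$. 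Conversely, Lemma~\ref{lem:suitable-p} produces a $\dois$ congruence $\mu^L_\mathcal{T}$ for every admissible $(\mathcal{T}, L)$ with $|\mathcal{T}| > 1$, or $|\mathcal{T}| = 1$ and $L \subsetneq \bigcap\mathcal{T}$. Injectivity of the whole parametrization is then read off from the fact that $\mathcal{T}$ is recovered from $M(\mu^+)$ and $L$ from the 1-class of $\mu$.

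For part (2), I would invoke Corollary~\ref{cor:cm-into-jirr} to obtain the order-reversing bijection $\Psi\colon \Cm{\alg{F}_n(k)} \to \J{\alg{F}_n(k)}$ with $\Psi(\mu) = \bigwedge \Phi(\mu)$. Lemma~\ref{lem:2nd-storey} directly identifies $\Psi(\mu^L_\mathcal{T}) = p^L_\mathcal{T}$ for the $\dois$ case, while Corollary~\ref{cor:free-atoms} gives $\Psi(\mu_T) = x_T = p^T_{\{T\}}$ (the latter equality was verified inside the proof of Corollary~\ref{cor:jirr-p}). Thus (2) is a transcription of (1).

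The main obstacle, as always with such enumeration statements, is the bookkeeping around the boundary case $|\mathcal{T}| = 1$. When $L = \bigcap\mathcal{T} = T$, the recipe should yield the atomic congruence $\mu_T \in \um$, whereas for $L \subsetneq T$ it must yield a strictly $\dois$ congruence. Checking that these two subcases cleanly reconstruct $\um{\alg{F}_n(k)}$ and a portion of $\dois{\alg{F}_n(k)}$ without overlap (and without omitting anything from $\dois{\alg{F}_n(k)}$ arising from $|\mathcal{T}| > 1$) is the only place the argument requires care; everything else is an invocation of the results above.
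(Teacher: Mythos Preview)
Your proposal is correct and follows exactly the approach the paper intends: the paper gives no explicit proof of this corollary, treating it as an immediate summary of Corollary~\ref{cor:free-atoms}, Lemma~\ref{lem:2nd-storey}, Corollary~\ref{cor:jirr-p}, Lemma~\ref{lem:suitable-p}, and the remark following it that sets the convention $\mu^T_{\{T\}} = \mu_T$. Your unpacking of these ingredients, including the attention to the boundary case $|\mathcal{T}|=1$ with $L = T$ versus $L \subsetneq T$, is precisely the bookkeeping the paper elides; the injectivity remark is a small bonus not strictly required for the set equality asserted here, but it is used implicitly in Theorem~\ref{thm:structure}(2).
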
  

\begin{example}\label{ex:stone}
Consider $k$-generated free algebra in the variety $\var{Pa}_1$, that is,
in the variety of Stone algebras. We must have
$|\mathcal{T}| = 1$, so let $T\in \pw(k)$ and let $L\subseteq T$. 
Then
\begin{align*}
p^L_{\{T\}} = (x_T)^{**}\wedge \bigwedge_{i\in L} x_i &=
\bigwedge_{j\in T} x_j^{**}\wedge \bigwedge_{j\notin T} x_j^{*}
  \wedge \bigwedge_{i\in L} x_i\\
&= \bigwedge_{i\in L} x_i\wedge\bigwedge_{j\notin T} x_j^{*}\wedge
   \bigwedge_{j\in T\setminus L} x_j^{**}
\end{align*} 
and therefore
$$
\J{\alg{F}_1(k)} = \bigl\{x_1^{\varepsilon(1)}\wedge \dots \wedge x_k^{\varepsilon(k)}:
\text{ for any map } \varepsilon:\{1,\dots,k\}\to \{1,{}^*,{}^{**}\}\bigr\}
$$
where $x^1 = x$. Note that in Boolean algebras, that is in $\var{Pa}_0$,
we have
$$
\J{\alg{F}_0(k)} = \bigl\{x_1^{\varepsilon(1)}\wedge \dots \wedge x_k^{\varepsilon(k)}:
\text{ for any map } \varepsilon:\{1,\dots,k\}\to \{1,{}^*\}\bigr\}.
$$
\end{example}

We are now ready to state the main results of the article.

\begin{theorem}[Structure of free p-algebra]\label{thm:structure}
The following hold:
\begin{enumerate}  
\item $\cm{\alg{F}_n(k)} = \{\mu^L_\mathcal{T}:
  \mathcal{T}\subseteq \pw(k),\ 0<|\mathcal{T}|\leq n,\ 
  L\subseteq \bigcap\mathcal{T}\}$.  
\item $\mu^L_\mathcal{T}\leq^{\Cm{}} \mu^K_\mathcal{S} \iff \mathcal{S}\subseteq
  \mathcal{T} \text{  and  }L\subseteq K$. 
\item $\alg{F}_n(k)\cong \mathrm{Up}(\mathcal{F}_p(\alg{F}_n(k),\subseteq)
  \cong\mathrm{Up}(\cm{\alg{F}_n(k)},\leq^{\Cm{}})
  \cong\mathrm{Up}(\J{\alg{F}_n(k)},\geq)$.
\end{enumerate}
\end{theorem}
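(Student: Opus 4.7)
The plan is to derive all three parts from the earlier results, with the only genuinely new work occurring in part (2). Part (1) is exactly the content of Corollary~\ref{cor:cm-and-jirr}(1), so I would dispose of it by citation.

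For part (2), I would unfold the definition as $1/\mu^L_\mathcal{T}\subseteq 1/\mu^K_\mathcal{S}$, then use Lemma~\ref{lem:2nd-storey}, together with Corollary~\ref{cor:free-atoms} for the atom case $\mathcal{T}=\{T\}$, $L=T$, to identify these filters as $\up{p^L_\mathcal{T}}$ and $\up{p^K_\mathcal{S}}$ respectively. The backward direction is then a short calculation: $\mathcal{S}\subseteq\mathcal{T}$ gives $\bigvee_{S\in\mathcal{S}}x_S\leq \bigvee_{T\in\mathcal{T}}x_T$ (hence the same after applying $^{**}$), and $L\subseteq K$ gives $\bigwedge_{i\in K}x_i\leq \bigwedge_{i\in L}x_i$; these combine to $p^K_\mathcal{S}\leq p^L_\mathcal{T}$ and so to the required filter inclusion. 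For the forward direction, $L\subseteq K$ follows because each $x_i$ with $i\in L$ lies in $1/\mu^L_\mathcal{T}$ by Lemma~\ref{lem:suitable-p}, hence also in $1/\mu^K_\mathcal{S}$, forcing $i\in K$. To obtain $\mathcal{S}\subseteq\mathcal{T}$, I would pick $S\in\mathcal{S}$, use the construction in Lemma~\ref{lem:suitable-p} to conclude $\mu^K_\mathcal{S}\subseteq \mu_S$, hence $\mu^K_\mathcal{S}\leq^{\Cm{}}\mu_S$; transitivity of $\leq^{\Cm{}}$ then gives $\mu^L_\mathcal{T}\leq^{\Cm{}}\mu_S$, and Theorem~\ref{thm:cm-summary}(3) converts this back to $\mu^L_\mathcal{T}\subseteq \mu_S$, i.e., $S\in\mathcal{T}$.

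For part (3), I would chain three order-theoretic isomorphisms. Local finiteness of $\var{Pa}_n$ makes $\alg{F}_n(k)$ finite, so Theorem~\ref{thm:cm-summary}(4)(a) directly supplies $\alg{F}_n(k)\cong\mathrm{Up}(\cm{\alg{F}_n(k)};\leq^{\Cm{}})$. The map $\Phi$ of Theorem~\ref{thm:cm-into-fp} is an order-isomorphism $(\cm{\alg{F}_n(k)},\leq^{\Cm{}})\to(\mathcal{F}_p(\alg{F}_n(k)),\subseteq)$ by the very definition of $\leq^{\Cm{}}$, giving the first isomorphism. For the third, the classical map $p\mapsto\up{p}$ is a dual order-isomorphism between $(\J{\alg{F}_n(k)},\leq)$ and $(\mathcal{F}_p(\alg{F}_n(k)),\subseteq)$ on any finite distributive lattice, so $\subseteq$-upsets of $\mathcal{F}_p(\alg{F}_n(k))$ correspond to $\geq$-upsets of $\J{\alg{F}_n(k)}$.

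The only mildly delicate step in the whole argument is the forward direction of (2), where one must pass from the derived order $\leq^{\Cm{}}$ back to set-theoretic inclusion of congruences; this is exactly what Theorem~\ref{thm:cm-summary}(3) provides when the larger element sits in $\um{\alg{F}_n(k)}$, so no real obstacle remains.
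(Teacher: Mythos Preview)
Your proposal is correct and, for parts (1), (3), the backward direction of (2), and the $L\subseteq K$ half of the forward direction of (2), it matches the paper's proof essentially verbatim.

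The one substantive difference is how you establish $\mathcal{S}\subseteq\mathcal{T}$ in the forward direction of (2). The paper argues by contradiction at the level of elements: assuming $R\in\mathcal{S}\setminus\mathcal{T}$, Lemma~\ref{lem:second-storey} yields both $x_R < p^K_{\mathcal{S}}$ (from $R\in\mathcal{S}$) and $p^L_{\mathcal{T}}\leq x_R^*$ (from $R\notin\mathcal{T}$), which together with $p^K_{\mathcal{S}}\leq p^L_{\mathcal{T}}$ forces the atom $x_R$ below its own pseudocomplement. Your route instead stays congruence-theoretic, lifting $\mu^K_{\mathcal{S}}\subseteq\mu_S$ through transitivity of $\leq^{\Cm{}}$ and then invoking Theorem~\ref{thm:cm-summary} to recover $\mu^L_{\mathcal{T}}\subseteq\mu_S$. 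Both are short; the paper's version is slightly more self-contained, while yours reuses the structural machinery already proved. One small technical caveat: Theorem~\ref{thm:cm-summary}(3) as stated assumes the smaller congruence lies in $\dois{\alg{A}}$, so for completeness you should also cite part~(2) of that theorem, or simply note that when $\mu^L_{\mathcal{T}}\in\um{\alg{F}_n(k)}$ it is $\leq^{\Cm{}}$-maximal and the conclusion is immediate.
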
  

\begin{proof}
Statement (1) follows immediately from Lemma~\ref{lem:suitable-p} and
the remarks following its proof.

For the forward direction of (2), assume $\mu^L_\mathcal{T}\leq^{\Cm{}}
\mu^K_\mathcal{S}$. By Definition~\ref{def:cm-order} we have
$1/\mu^L_\mathcal{T}\subseteq 1/\mu^K_\mathcal{S}$, therefore
$p^K_\mathcal{S}\leq p^L_\mathcal{T}$. To show $\mathcal{S}\subseteq\mathcal{T}$
pick $R\in\mathcal{S}$ and suppose $R\notin\mathcal{T}$. Then
$\mu_R\notin M((\mu^L_\mathcal{T})^+)$ and by
Lemma~\ref{lem:second-storey}(2) we get $p^L_\mathcal{T}\leq
x_R^*$. Hence $x_R\leq \mu^K_\mathcal{S}\leq p^L_\mathcal{T}\leq x_R^*$, which is
a contradiction, showing that $\mathcal{S}\subseteq\mathcal{T}$. Now take $j\in L$.
Then $x_j\in 1/\mu^L_\mathcal{T}\subseteq 1/\mu^K_\mathcal{S}$, so
$x_j\in 1/\mu^K_\mathcal{S}$, and thus $j\in K$. Hence $L\subseteq K$.  

For the backward direction of (2), let
$\mathcal{S}\subseteq\mathcal{T}$ and $L\subseteq K$.
From the former we obtain
$\bigl(\bigvee_{S\in\mathcal{S}} x_S\bigr)^{**}\leq
\bigl(\bigvee_{T\in\mathcal{T}} x_T\bigr)^{**}$, and from the latter we get
$\bigwedge_{i\in K}x_i\leq \bigwedge_{i\in L}x_i$. Hence
$p^K_\mathcal{S}\leq p^L_\mathcal{T}$, so 
$1/\mu^K_\mathcal{S}\subseteq 1/\mu^L_\mathcal{T}$.

In (3) the first isomorphism follows by duality and finiteness, while the second
and third follow from Theorem~\ref{thm:cm-into-fp}, Definition~\ref{def:cm-order} and
Corollary~\ref{cor:cm-and-jirr}. 
\end{proof}

\begin{theorem}[Normal form theorem]\label{thm:normal-form}
Every element $t$ of the algebra $\alg{F}_n(k)$ is of the form
$$  
t = \bigvee\bigl\{p^L_{\mathcal{T}}\in \J{\alg{F}_n(k)}: p^L_{\mathcal{T}}\leq t\bigr\}.
$$
Equivalently,
$$  
t = \bigvee\max\bigl\{p^L_{\mathcal{T}}\in \J{\alg{F}_n(k)}: p^L_{\mathcal{T}}\leq t\bigr\}.
$$
\end{theorem}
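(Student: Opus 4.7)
The plan is to deduce the normal form theorem essentially as a corollary of Theorem~\ref{thm:structure} together with the classical representation theorem for finite distributive lattices. Since $\var{Pa}_n$ is locally finite (subdirect representations have finitely many factors $\Bnalg{k}$ with $k \leq n$, each of finite size), the free algebra $\alg{F}_n(k)$ is finite. Its lattice reduct is a finite distributive lattice, and in any such lattice every element is the join of the join-irreducibles below it — this is standard.

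More concretely, I would proceed as follows. First, fix an arbitrary $t\in \alg{F}_n(k)$ and set $J_t \deq \{p\in\J{\alg{F}_n(k)} : p\leq t\}$. The inequality $\bigvee J_t \leq t$ is immediate. For the reverse inequality, invoke Theorem~\ref{thm:structure}(3): via the isomorphism $\alg{F}_n(k)\cong \mathrm{Up}(\J{\alg{F}_n(k)},\geq)$, the element $t$ corresponds to a downset (in the natural order) of $\J{\alg{F}_n(k)}$, and the join in $\alg{F}_n(k)$ matches the union of the corresponding downsets. An elementary check shows that the downset corresponding to $t$ is exactly $J_t$, so $t = \bigvee J_t$. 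Combining this with Corollary~\ref{cor:cm-and-jirr}(2), which identifies $\J{\alg{F}_n(k)}$ with $\{p^L_\mathcal{T}\}$, yields the first form of the theorem.

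For the second, equivalent form, observe that if $p^L_\mathcal{T}\in J_t$ is not maximal in $J_t$, then there is some $p^K_\mathcal{S}\in J_t$ with $p^L_\mathcal{T} < p^K_\mathcal{S}$, and so $p^L_\mathcal{T}$ contributes nothing to the join. Hence one may restrict to $\max J_t$ without changing the value of the join.

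The argument is essentially bookkeeping, so I do not anticipate any genuine obstacle; the substantive work has already been done in establishing Theorem~\ref{thm:structure} and Corollary~\ref{cor:cm-and-jirr}. The only minor point to be careful about is ensuring that the isomorphism in Theorem~\ref{thm:structure}(3) really does identify the lattice join in $\alg{F}_n(k)$ with set-theoretic union of downsets of $\J{\alg{F}_n(k)}$ — but this is immediate from the standard Birkhoff representation, since the isomorphism is given by $a\mapsto \{p\in\J{\alg{F}_n(k)} : p\leq a\}$.
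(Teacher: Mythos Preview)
Your proposal is correct and follows essentially the same approach as the paper: the paper's own proof is a one-liner observing that every join-irreducible has the form $p^L_{\mathcal{T}}$ by the preceding results, implicitly relying (as you make explicit) on the standard fact that in a finite distributive lattice every element is the join of the join-irreducibles below it. Your version simply spells out the routine details the paper leaves to the reader.
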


\begin{proof}
By previous results, every element of $\J{\alg{F}_n(k)}$ is of the form
$p^{L}_{\mathcal{T}}$  for appropriate $L$ and $\mathcal{T}$.
\end{proof}

\subsection{Digression: free algebras in $\var{H_3}$}
In our description of $\cm{\alg{F}_m(k)}$ we only used the fact that
completely meet-irreducible congruences come in two layers
(cf. Definition~\ref{def:storeys}). The same holds for algebras from the variety
$\var{H_3}$, of Heyting algebras of height 3; in fact we have 
$(\cm{\alg{G}_m(k)},\subseteq) = (\cm{\alg{F}_m(k)},\subseteq)$, where
$\alg{G}_m(k)$ is the free $k$-generated algebra in the variety $\var{H_{3,m}}$.
The parameter $m$ in this case
refers to the \emph{width} of the algebra, defined by the identity
$\bigvee_{i=0}^m(x_i\ra \bigvee_{j\neq i} x_j) = 1$ over the variety of all
Heyting algebras.
The following proposition will be easily proved by the reader,
for example by mimicking our reasoning and using $1$-regularity of Heyting
algebras together with Esakia duality.

\begin{prop}\label{prop:free-H3}
Let $\var{H_{3,m}}$ stand for the variety of Heyting algebras of
height $3$ and width $m$. Let $\alg{G}_m(k)$ be the free
$k$-generated algebra in $\var{H_{3,m}}$. Then,
$$  
\alg{G}_m(k)\cong \mathrm{Up}(\cm{\alg{G}_m(k)},\subseteq).
$$
\end{prop}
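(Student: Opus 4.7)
The plan is to adapt the p-algebra argument, exploiting two simplifications that Heyting algebras afford: 1-regularity of congruences, which eliminates the need for the auxiliary order $\leq^{\Cm{}}$, and the finite form of Esakia duality, which is directly applicable once we know the algebra is finite.

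First, I would record that $\alg{G}_m(k)$ is finite. The subdirectly irreducible members of $\var{H_{3,m}}$ are exactly the Heyting algebras whose p-algebra reducts are $\sipalg{B}_s$ for $0\leq s\leq m$ (cf.\ the remarks around Lemma~\ref{lem:H3-subred}), of which there are only finitely many up to isomorphism; standard arguments then give local finiteness of $\var{H_{3,m}}$. The structural analysis of $\cm{\alg{G}_m(k)}$ from the p-algebra case transfers verbatim: the analogues of Lemma~\ref{lem:mu-plus} and Definition~\ref{def:storeys} hold, yielding a two-storey decomposition $\cm{\alg{G}_m(k)} = \um{\alg{G}_m(k)}\cup \dois{\alg{G}_m(k)}$, where the storeys are determined by whether $\mu^+$ is the full congruence or not.

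The central step is to show that the map $\Phi\colon\cm{\alg{G}_m(k)}\to\mathcal{F}_p(\alg{G}_m(k))$ given by $\Phi(\mu)\deq 1/\mu$ is an order-isomorphism with respect to inclusion. Injectivity, and the implication $1/\mu\subseteq 1/\nu\Rightarrow\mu\subseteq\nu$, are immediate from 1-regularity of Heyting algebras---precisely what Theorem~\ref{thm:cm-regular} had to establish by hand in the p-algebra setting. For surjectivity I would transcribe the construction from the proof of Theorem~\ref{thm:cm-into-fp}: given a prime filter $F$, form $\overline{F}\deq\{a:a^{**}\in F\}$, extend each $a\notin\overline{F}$ to a prime filter $G_a$ in $\Phi(\um{\alg{G}_m(k)})$ missing $a$, intersect the corresponding kernels to obtain a congruence $\alpha$ with $1/\alpha=\overline{F}$, and split the top class by the equivalence $\sim$ into $F$ and $\overline{F}\setminus F$.

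Finite Esakia duality then gives $\alg{G}_m(k)\cong\mathrm{Up}(\mathcal{F}_p(\alg{G}_m(k)),\subseteq)$, and composing with $\Phi$ yields the claimed isomorphism $\alg{G}_m(k)\cong\mathrm{Up}(\cm{\alg{G}_m(k)},\subseteq)$. The main technical wrinkle, and the only real obstacle, is verifying that the equivalence $\sim$ respects the Heyting implication (and not merely the p-algebra operations), so that the resulting subdirectly irreducible quotient is genuinely a Heyting algebra: this amounts to a case analysis parallel to cases~(1)--(3) at the end of the proof of Theorem~\ref{thm:cm-into-fp}, exploiting that $\ra$ between $F$ and $\overline{F}\setminus F$ is fully determined by the Boolean structure of $\alg{G}_m(k)/\alpha$ together with membership in $F$. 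Once this is in place, the quotient automatically lies in $\var{H_{3,m}}$ because $\alg{G}_m(k)$ does, which finishes the argument.
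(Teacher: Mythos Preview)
Your approach is correct and follows the paper's hint (mimic the reasoning, use $1$-regularity, use Esakia duality); the paper gives no detailed proof to compare against. One remark on economy: your surjectivity argument takes an unnecessary detour. You invoke $1$-regularity for injectivity and order-reflection, but then abandon it for surjectivity, instead reproducing the $\sim$-construction from Theorem~\ref{thm:cm-into-fp} and flagging the verification that $\sim$ respects $\to$ as ``the only real obstacle.'' In Heyting algebras this obstacle does not exist: $1$-regularity (together with the standard description of Heyting congruences via filters, $\theta_F = \{(a,b): a\leftrightarrow b\in F\}$) already hands you, for every prime filter $F$, a congruence $\theta_F$ with $1/\theta_F = F$. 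Since $\alg{G}_m(k)$ is finite and $F$ is prime, the quotient $\alg{G}_m(k)/\theta_F$ is a finite Heyting algebra in which $1$ is join-irreducible, hence has a unique coatom, hence is subdirectly irreducible; so $\theta_F\in\cm{\alg{G}_m(k)}$ and $\Phi$ is surjective. (In fact your $\sim$ coincides with $\theta_F$, so the case analysis you propose would succeed, but it is work you need not do.) This is also why the paper remarks, just before Example~\ref{ex:not-iso}, that for \emph{any} finite Heyting algebra $\alg{A}$ the map $\HtM$ already gives $\alg{A}\cong\mathrm{Up}(\Cm{\alg{A}})$: once finiteness of $\alg{G}_m(k)$ is established, the proposition is an instance of that general fact.
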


By universal algebra, we have that
$\alg{F}_m(k)$ is a subalgebra of the p-algebra reduct $\alg{G}_m(k)^r$ of
$\alg{G}_m(k)$. 
The embedding of $\alg{F}_m(k)$ into $\alg{G}_m(k)^r$ is dual to the following
aesthetically pleasing result, which follows immediately from
Theorem~\ref{thm:cm-summary}(4).

\begin{prop}\label{prop:embed-in-H3}
The identity map on $\cm{\alg{F}_m(k)}$ is a surjective pp-morphism
$$
{id}\colon (\cm{\alg{G}_m(k)},\subseteq)\to
(\cm{\alg{F}_m(k)},\leq^{\Cm{}}).
$$
\end{prop}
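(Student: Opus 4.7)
The plan is to derive the proposition directly from Theorem~\ref{thm:cm-summary}(4)(b) applied to the p-algebra $\alg{F}_m(k)$. The paragraph immediately preceding the proposition records $(\cm{\alg{G}_m(k)},\subseteq) = (\cm{\alg{F}_m(k)},\subseteq)$, so the identity map is a bijection from $\cm{\alg{G}_m(k)}$ onto $\cm{\alg{F}_m(k)}$; surjectivity is therefore immediate, and continuity is trivial because both posets are finite and hence carry the discrete topology.

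Order-preservation will come from Theorem~\ref{thm:cm-summary}(1): $\mu\subseteq\nu$ implies $1/\mu\subseteq 1/\nu$, that is, $\mu\leq^{\Cm{}}\nu$. The remaining clause in the definition of pp-morphism is that $\max_\subseteq\up{\mu} = \max_{\leq^{\Cm{}}}\up{\mu}$ for every $\mu$. The key observation is that in both orderings the set of maximal elements is precisely $\um{\alg{F}_m(k)}$: in $(\cm{\alg{F}_m(k)},\subseteq)$ this is Definition~\ref{def:storeys}, and in $(\cm{\alg{F}_m(k)},\leq^{\Cm{}})$ it holds because for $\mu\in\um{\alg{F}_m(k)}$ the filter $1/\mu$ is an ultrafilter, hence maximal among prime filters. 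Granting this, the max-condition reduces to the assertion that for every $\mu\in\cm{\alg{F}_m(k)}$ and every $\nu\in\um{\alg{F}_m(k)}$ we have $\mu\subseteq\nu\iff\mu\leq^{\Cm{}}\nu$. When $\mu\in\um{\alg{F}_m(k)}$ this is Theorem~\ref{thm:cm-summary}(2); when $\mu\in\dois{\alg{F}_m(k)}$ it is Theorem~\ref{thm:cm-summary}(3). In either case, $\max_\subseteq\up{\mu}$ equals $M(\mu^+)$, which coincides with $\max_{\leq^{\Cm{}}}\up{\mu}$.

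There is no serious obstacle here: the substantive content is hidden in the preceding identification $\cm{\alg{G}_m(k)} = \cm{\alg{F}_m(k)}$, which rests on the two-storey description of $\cm{}$ common to $\var{H_{3,m}}$ and $\var{Pa}_m$ (and which the paper invites the reader to verify by mimicking the proof of Theorem~\ref{thm:cm-summary} using $1$-regularity of Heyting algebras and Esakia duality). Once that identification is in hand, the proposition is no more than a repackaging of Theorem~\ref{thm:cm-summary}(4)(b) for $\alg{F}_m(k)$, with $\cm{\alg{G}_m(k)}$ substituted for $\cm{\alg{F}_m(k)}$ in the domain.
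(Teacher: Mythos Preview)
Your proposal is correct and follows essentially the same route as the paper, which simply notes that the proposition follows immediately from Theorem~\ref{thm:cm-summary}(4) together with the identification $(\cm{\alg{G}_m(k)},\subseteq) = (\cm{\alg{F}_m(k)},\subseteq)$ recorded just before the statement. You have unpacked the invocation of (4)(b) into its constituent parts (1)--(3), which is harmless but more than the paper itself supplies.
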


\subsection{Examples and some properties}

\begin{example}\label{ex:1-generated}
Consider $k = 1$ so that\/ $\pw(k) = \{\emptyset, \{\emptyset\}\}$.
There are three nonempty subsets of\/ $\pw(k)$, namely
$\mathcal{T}_0 = \{\emptyset\}$, $\mathcal{T}_1 = \{\{\emptyset\}\}$
and $\mathcal{T}_2 = \pw(k)$. Each has as most two elements, so
assume $n\geq 2$. Then, $\bigcap\mathcal{T}_0 = \emptyset =
\bigcap\mathcal{T}_2$ and $\bigcap\mathcal{T}_1 = \{\emptyset\}$.
Therefore, $\J{\alg{F}_n(1)}$ consists of $p_{\mathcal{T}_0}^\emptyset$,
$p_{\mathcal{T}_1}^\emptyset$, $p_{\mathcal{T}_1}^{\{\emptyset\}}$, and
$p_{\mathcal{T}_2}^\emptyset$, and we have two atoms:
$$
x_\emptyset = \bigwedge_{i\in\emptyset}x_i\wedge
\bigwedge_{i\notin \emptyset}x_i^* = 1\wedge x_0^* = x_0^*
$$
and
$$
x_{\{\emptyset\}} = \bigwedge_{i\in \{\emptyset\}}x_i\wedge
\bigwedge_{i\notin\{\emptyset\}} x_i^* = x_0\wedge 1 = x_0.
$$
Putting $x = x_0$ for convenience, we finally obtain
\begin{align*}
p^{\emptyset}_{\mathcal{T}_0} &= x_\emptyset = x^*\\
p^{\{\emptyset\}}_{\mathcal{T}_1} &= x_{\{\emptyset\}}  = x\\
p^{\emptyset}_{\mathcal{T}_1} &= x^{**}\\
p^{\emptyset}_{\mathcal{T}_2} &= (x_\emptyset\vee x_{\{\emptyset\}})^{**}
\wedge \bigwedge_{i\in\emptyset} x_i^*
= (x^*\vee x)^{**}\wedge 1 = 1                                
\end{align*}  
and therefore we have, for $n\geq 1$ 
\begin{center}
\begin{tikzpicture}[rotate=180]
\node[dot, label=right:$x$] (v0) at (-0.5,0) {};
\node[dot, label=left:$x^*$] (v1) at (1,0.5) {};
\node[dot, label=right:$x^{**}$] (v2) at (-0.5,1) {};
\node[dot, label=left:$1$] (v3) at (0.2,2) {};
\path[draw,thick] (v0)--(v2)--(v3) (v1)--(v3);
\node (v4) at (0,3) {$\bigl(\J{\alg{F}_n(1)},\geq\bigr)$};
\end{tikzpicture}
\hspace{1.5cm}
\begin{tikzpicture}
\node[dot, label=right:$0$] (v0) at (0,0) {};
\node[dot, label=left:$x$] (v1) at (-0.5,0.5) {};
\node[dot, label=right:{$x^*$}] (v2) at (0.5,0.5) {};
\node[dot, label=left:{$x^{**}$}] (v3) at (-1,1) {};
\node[dot, label=right:{$x\vee x^{*}$}] (v4) at (0,1) {};
\node[dot, label=right:{$x\vee x^{**}$}] (v5) at (-0.5,1.5) {};
\node[dot, label=right:{$1$}] (v6) at (-0.5,2) {};
\path[draw,thick] (v0)--(v1)--(v3)--(v5)--(v6)
(v0)--(v2)--(v4)--(v5) (v1)--(v4);
\node (v7) at (0,-1) {$\alg{F}_n(1)$};
\end{tikzpicture}  
\end{center}
Note that for $n=1$ the set $\mathcal{T}_2$ does not satisfy the requirement
$|\mathcal{T}_2|\leq n$, so we get only the elements
$p^{\emptyset}_{\mathcal{T}_0} = x^*$,
$p^{\emptyset}_{\mathcal{T}_1}  = x^{**}$ and
$p^{\{\emptyset\}}_{\mathcal{T}_1}  = x$. So, we get the
following pictures 
\begin{center}
\begin{tikzpicture}[rotate=180]
\node[dot, label=right:$x$] (v0) at (-0.5,1) {};
\node[dot, label=left:$x^*$] (v1) at (1,1.5) {};
\node[dot, label=right:$x^{**}$] (v2) at (-0.5,2) {};
\path[draw,thick] (v0)--(v2);
\node (v4) at (0,3) {$\bigl(\J{\alg{F}_1(1)},\geq\bigr)$};
\end{tikzpicture}
\hspace{1.5cm}
\begin{tikzpicture}
\node[dot, label=right:$0$] (v0) at (0,0) {};
\node[dot, label=left:$x$] (v1) at (-0.5,0.5) {};
\node[dot, label=right:{$x^*$}] (v2) at (0.5,0.5) {};
\node[dot, label=left:{$x^{**}$}] (v3) at (-1,1) {};
\node[dot, label=right:{$x\vee x^{*}$}] (v4) at (0,1) {};
\node[dot, label=right:{$1 = x\vee x^{**}$}] (v5) at (-0.5,1.5) {};
\path[draw,thick] (v0)--(v1)--(v3)--(v5)
(v0)--(v2)--(v4)--(v5) (v1)--(v4);
\node (v7) at (0,-1) {$\alg{F}_1(1)$};
\end{tikzpicture}  
\end{center}
showing that for $\alg{F}_1(1)$ is isomorphic to
the direct product $\Bnalg{1}\times\Bnalg{0}$.
\end{example}

\begin{example}\label{ex:1-gen-H3}
For contrast, here is the one-generated free algebra $\alg{G}_n(1)$
in $\var{HA_3}$, easy to recognise as a quotient of the Rieger-Nishimura
lattice. Note that $|\J{\alg{G}_n(1)}| = |\J{\alg{F}_n(1)}|$ but the order is
different.   
\begin{center}
\begin{tikzpicture}[rotate=180]
\node[dot, label=right:$x$] (v0) at (-0.5,0) {};
\node[dot, label=left:$x^*$] (v1) at (1,0) {};
\node[dot, label=right:$x^{**}$] (v2) at (-0.5,1.5) {};
\node[dot, label=left:$x^{**}\rightarrow x$] (v3) at (1,1.5) {};
\path[draw,thick] (v0)--(v2) (v0)--(v3) (v1)--(v3);
\node (v4) at (0,3) {$\bigl(\J{\alg{G}_n(1)},\geq\bigr)$};
\end{tikzpicture}
\hspace{1.5cm}
\begin{tikzpicture}
\node[dot, label=right:$0$] (v0) at (0,0) {};
\node[dot, label=left:$x$] (v1) at (-0.5,0.5) {};
\node[dot, label=right:{$x^*$}] (v2) at (0.5,0.5) {};
\node[dot, label=left:{$x^{**}$}] (v3) at (-1,1) {};
\node[dot, label=right:{$x\vee x^{*}$}] (v4) at (0,1) {};
\node[dot, label=left:{$x\vee x^{**}$}] (v5) at (-0.5,1.5) {};
\node[dot, label=right:{$1$}] (v6) at (0,2) {};
\node[dot, label=right:{$x^{**}\rightarrow x$}] (v8) at (0.5,1.5) {};
\path[draw,thick] (v0)--(v1)--(v3)--(v5)--(v6)--(v8)--(v4)
(v0)--(v2)--(v4)--(v5) (v1)--(v4);
\node (v7) at (0,-1) {$\alg{G}_n(1)$};
\end{tikzpicture}  
\end{center}  
\end{example}

\begin{example}\label{ex:2-generated}
Similar calculations, which we omit here, produce the picture
of the poset $\bigl(\J{\alg{F}_n(2)},\geq\bigr)$ in
Figure~\ref{fig:2-gener}, where we have
\begin{itemize}
\item $t\deq ((x^*\wedge y)\vee(x\wedge y^*))^{**} = ((x\wedge y)\vee(x^*\wedge y^*))^*$,
\item $u\deq ((x^*\wedge y)\vee(x\wedge y^*)\vee(x^*\wedge y^*))^{**} = (x\wedge y)^*$,
\item $w\deq ((x^*\wedge y)\vee(x\wedge y)\vee(x^*\wedge y^*))^{**} = (x\wedge y^*)^*$,
\item $r\deq ((x\wedge y)\vee(x\wedge y^*)\vee(x^*\wedge y))^{**} = (x^*\wedge y^*)^*$,
\item $s\deq ((x\wedge y)\vee(x\wedge y^*)\vee(x^*\wedge y^*))^{**} = (x^*\wedge y)^*$.
\end{itemize}
Different types of lines indicate $\bigl(\J{\alg{F}_n(2)},\geq\bigr)$ for
$n = 1,2,3,4$. 
Namely, dashed lines (and their incident points) depict $\J{\alg{F}_1(2)}$,
dashed+solid lines depict $\J{\alg{F}_2(2)}$,
dashed+solid+dotted is $\J{\alg{F}_3(2)}$, and the whole picture is
$\J{\alg{F}_4(2)}$. In fact for any $n\geq 4$ (including $n=\omega$)
$\J{\alg{F}_n(2)}$ is the whole picture.

\begin{figure}
\begin{center}  
\begin{tikzpicture}[xscale=2,yscale=1.5,rotate=180]
\tikzset{
every label/.append style={font={\fontsize{8pt}{8pt}\selectfont}},
}

\node[dot, label=above:{$x\wedge y^*$}] (v1) at (-3,-5.5) {};
\node[dot, label=above:{$x\wedge y$}] (v2) at (-1.5,-5.5) {};
\node[dot, label=above:{$x^*\wedge y$}] (v3) at (1,-5.5) {};
\node[dot, label=above:$x^*\wedge y^*$] (v4) at (0,-5.5) {};

\node[dot, label=right:$x^{**}\wedge y^*$] (v5) at (-2.4,-4.2) {};
\node[dot, label=30:$x\wedge y^{**}$] (v6) at (-2,-5) {};
\node[dot, label=120:$x^{**}\wedge y$] (v7) at (-1,-5) {};
\node[dot, label=left:$x^*\wedge y^{**}$] (v8) at (0.5,-4.2) {};
\node[dot, label=right:$x^{**}\wedge y^{**}$] (v9) at (-1.5,-4.5) {};

\node[dot, label=right:$x$] (v10) at (-3.5,-4.2) {};  
\node[dot, label=left:$y$] (v11) at (1.5,-4.2) {};
\node[dot, label=right:$y^*$] (v12) at (-2.4,-3) {};
\node[dot, label=right:$x^{**}$] (v13) at (-3.5,-3) {};
\node[dot, label=left:$t$] (v14) at (-0.2,-3) {};
\node[dot, label=left:$y^{**}$] (v15) at (1.5,-3) {};
\node[dot, label=left:$x^*$] (v16) at (0.5,-3) {};  
\node[dot, label=right:$t^*$] (v17) at (-1.2,-3) {};

\node[dot, label=right:{$s=(x^*\wedge y)^*$}] (v18) at (-3,-1.5) {};
\node[dot, label=right:{$r=(x^*\wedge y^*)^*$}] (v19) at (-1.6,-1.5) {};
\node[dot, label=left:{$u=(x\wedge y)^*$}] (v20) at (-0.4,-1.5) {};
\node[dot, label=left:{$w=(x\wedge y^*)^*$}] (v21) at (1,-1.5) {};

\node[dot, label=below:$1$] (v22) at (-1,-0.5) {};  

\path
(v1) edge[dashed] (v5) (v2) edge[dashed] (v6) (v2) edge[dashed] (v7)
(v3) edge[dashed] (v8) (v9) edge[dashed] (v6) (v9) edge[dashed] (v7)
;

\path
(v10) edge (v1) (v10) edge (v6) 
(v11) edge (v3) (v11) edge (v7)
(v13) edge (v10) (v13) edge (v5) (v13) edge (v9)
(v15) edge (v9) (v15) edge (v8) (v15) edge (v9)
(v15) edge (v11)
(v12) edge (v5) (v12) edge (v4)
(v14) edge (v5) (v14) edge (v8)
(v16) edge (v8) (v16) edge (v4)
(v17) edge (v9) (v17) edge (v4)
;

\path
(v18) edge[dotted] (v12) (v18) edge[dotted] (v13) (v18) edge[dotted] (v17)
(v19) edge[dotted] (v13) (v19) edge[dotted] (v14) (v19) edge[dotted] (v15)
(v20) edge[dotted] (v12) (v20) edge[dotted] (v14) (v20) edge[dotted] (v16)
(v21) edge[dotted] (v17) (v21) edge[dotted] (v15) (v21) edge[dotted] (v16)
;

\path
(v22) edge[dashdotted] (v18) (v22) edge[dashdotted] (v19) (v22) edge[dashdotted] (v20)
(v22) edge[dashdotted] (v21) 
;
\end{tikzpicture}  
\end{center}
\caption{$\bigl(\J{\alg{F}_n(2)},\geq\bigr)$}
\label{fig:2-gener}    
\end{figure}
\end{example}

Note that the identities used in Example~\ref{ex:2-generated} all hold in
$\var{Pa}$ by Corollary~\ref{cor:super-glivenko}. They can be viewed as 
weak De Morgan laws, and greatly simplify several
calculations we omitted.

\begin{prop}
For any finite $k$, consider the algebra $\alg{F}_n(k)$ for any $n\geq 2^k$. We have
$$  
R(\alg{F}_n(k)) = 
\bigl\{\bigl(\bigvee_{T\in\mathcal{T}}x_T\bigr)^{**}: \mathcal{T}\subseteq\pw(k)\bigr\}
\subseteq \J{\alg{F}_n(k)}\cup\{0^{\alg{F}(k)}\}.
$$
In particular, $1^{\alg{F}_n(k)}\in \J{\alg{F}_n(k)}$. 
\end{prop}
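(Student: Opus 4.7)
The plan is to assemble several earlier results; no step is substantial. Since $\var{Pa}_n$ is locally finite and $k$ is finite, $\alg{F}_n(k)$ is a finite p-algebra, so Lemma~\ref{lem:tech-regulars} applies and gives
$$
R(\alg{F}_n(k)) = \bigl\{\bigl(\bigvee_{a\in S}a\bigr)^{**}: S\subseteq \At{\alg{F}_n(k)}\bigr\}.
$$
By Corollary~\ref{cor:free-atoms}, $\At{\alg{F}_n(k)} = \{x_T: T\in\pw(k)\}$, so reindexing subsets of atoms by subsets $\mathcal{T}\subseteq\pw(k)$ yields the first equality in the statement.

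For the inclusion into $\J{\alg{F}_n(k)}\cup\{0\}$ I would split on whether $\mathcal{T}$ is empty. If $\mathcal{T} = \emptyset$, then $\bigl(\bigvee_{T\in\emptyset} x_T\bigr)^{**} = 0^{**} = 0$. If $\mathcal{T}\neq\emptyset$, take $L = \emptyset$; trivially $L\subseteq\bigcap\mathcal{T}$, and $|\mathcal{T}|\leq |\pw(k)| = 2^k\leq n$ by hypothesis, so the pair $(\mathcal{T},\emptyset)$ satisfies the admissibility conditions of Corollary~\ref{cor:cm-and-jirr}(2). Substituting into~\eqref{eq:n-at} gives
$$
p^\emptyset_\mathcal{T} = \bigl(\bigvee_{T\in\mathcal{T}}x_T\bigr)^{**}\wedge 1 = \bigl(\bigvee_{T\in\mathcal{T}}x_T\bigr)^{**},
$$
and hence the element lies in $\J{\alg{F}_n(k)}$ as required.

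The final assertion, $1\in\J{\alg{F}_n(k)}$, will be obtained by specialising to $\mathcal{T} = \pw(k)$, which is admissible precisely because $n\geq 2^k$. By Lemma~\ref{lem:gliv-atoms}, $\bigvee_{T\in\pw(k)} x_T$ is the least dense element $d_0$, so $d_0^* = 0$ and therefore $d_0^{**} = 0^* = 1$. Hence $1 = p^\emptyset_{\pw(k)}\in \J{\alg{F}_n(k)}$. No step is a serious obstacle; the only point requiring any care is tracking the cardinality bound $n\geq 2^k$, which is exactly what keeps every pair $(\mathcal{T},\emptyset)$ within the parameter range of Corollary~\ref{cor:cm-and-jirr}(2) and thus guarantees that the formula produces genuine join-irreducibles rather than falling outside the classification.
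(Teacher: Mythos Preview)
Your proposal is correct and follows essentially the same route as the paper's proof: the paper also invokes Lemma~\ref{lem:tech-regulars} together with Corollary~\ref{cor:free-atoms} for the first equality, and then identifies each nonzero regular element with $p^\emptyset_{\mathcal{T}}$ via the description of $\J{\alg{F}_n(k)}$. Your treatment is slightly more explicit in tracking where the hypothesis $n\geq 2^k$ enters and in spelling out why $1 = p^\emptyset_{\pw(k)}$, but the argument is the same.
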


\begin{proof}
By Lemma~\ref{lem:tech-regulars} and Corollary~\ref{cor:free-atoms}
we get
$$
R(\alg{F}_n(k)) =
\bigl\{\bigl(\bigvee_{T\in\mathcal{T}}x_T\bigr)^{**}:\mathcal{T}\subseteq\pw(k)\bigr\}
$$
and by Corollary~\ref{cor:jirr-p} we see that
$$
R(\alg{F}_n(k)) = \{p^\emptyset_{\mathcal{T}}: \emptyset\neq
\mathcal{T}\subseteq\pw(k)\}\cup\{0^{\alg{F}_n(k)}\}
\subseteq
\J{\alg{F}_n(k)}\cup\{0^{\alg{F}_n(k)}\}.
$$
\end{proof}

\begin{prop}
For any finite $k$, consider the algebra $\alg{F}_n(k)$ free in $\var{Pa}_n$,
for any $n\geq 2^{k-1}$, with the set $X = \{x_1,\dots,x_k\}$ of free generators.
Then $X\subseteq\J{\alg{F}_n(k)}$.
\end{prop}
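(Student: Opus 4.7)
The plan is to exhibit, for each $i\in\{1,\dots,k\}$, a pair $(\mathcal{T}_i,L_i)$ satisfying the constraints of Corollary~\ref{cor:cm-and-jirr}(2) such that $p^{L_i}_{\mathcal{T}_i}$ equals the generator $x_i$. The natural candidate is
$$
\mathcal{T}_i \deq \{T\in \pw(k) : i\in T\}, \qquad L_i \deq \{i\}.
$$
Indeed $|\mathcal{T}_i|=2^{k-1}\leq n$ by hypothesis, $\bigcap \mathcal{T}_i = \{i\}\supseteq L_i$, so $(\mathcal{T}_i,L_i)$ is admissible.

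The bulk of the work is then the identity $p^{L_i}_{\mathcal{T}_i} = x_i$. I would proceed in two short steps. First, using distributivity of the lattice reduct to factor out the common $x_i$ and collecting the remaining conjunctions into a disjunctive-normal-form expansion of $\bigwedge_{j\neq i}(x_j\vee x_j^*)$, one gets
$$
\bigvee_{T\in\mathcal{T}_i} x_T
= \bigvee_{T\ni i}\Bigl(\bigwedge_{j\in T} x_j \wedge \bigwedge_{j\notin T} x_j^*\Bigr)
= x_i \wedge \bigwedge_{j\neq i}(x_j\vee x_j^*).
$$
Second, since $x_j\vee x_j^*$ is dense for every $j$ (because $(x_j\vee x_j^*)^* = x_j^*\wedge x_j^{**}=0$), and since the Glivenko map ${}^{**}$ preserves finite meets, the double pseudocomplement of the right-hand side collapses to $x_i^{**}$. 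Hence
$$
p^{L_i}_{\mathcal{T}_i} = \Bigl(\bigvee_{T\in\mathcal{T}_i} x_T\Bigr)^{**}\wedge x_i = x_i^{**}\wedge x_i = x_i,
$$
and Corollary~\ref{cor:cm-and-jirr}(2) yields $x_i\in\J{\alg{F}_n(k)}$.

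There is no real obstacle here: once the right $(\mathcal{T}_i,L_i)$ is chosen, the argument is a direct three-line calculation, and the bound $n\geq 2^{k-1}$ is exactly what is needed to accommodate $|\mathcal{T}_i|=2^{k-1}$. The borderline case $k=1$ is subsumed: $\mathcal{T}_1=\{\{1\}\}$ is of size $1\leq n$, the empty meet over $j\neq 1$ is vacuously $1$, and $p^{L_1}_{\mathcal{T}_1}$ collapses to the atom $x_{\{1\}}=x_1$, consistent with the description in Example~\ref{ex:1-generated}.
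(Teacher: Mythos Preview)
Your proof is correct and identifies the same pair $(\mathcal{T}_i,L_i)=(\{T:i\in T\},\{i\})$ as the paper, but the verification that $p^{L_i}_{\mathcal{T}_i}=x_i$ proceeds along a different route. The paper argues dually: it observes that $\mu^L_{\mathcal{T}}\in\HtM(x_i)$ if and only if $i\in L$, then invokes the order description in Theorem~\ref{thm:structure}(2) to conclude that $\mu^{\{i\}}_{\mathcal{T}_i}$ is the least element of $\HtM(x_i)$ under $\leq^{\Cm{}}$, and finally appeals to the normal form theorem to get $x_i=p^{\{i\}}_{\mathcal{T}_i}$. You instead compute $p^{\{i\}}_{\mathcal{T}_i}$ head-on, using distributivity to rewrite $\bigvee_{T\ni i}x_T$ as $x_i\wedge\bigwedge_{j\neq i}(x_j\vee x_j^*)$ and then the density of $x_j\vee x_j^*$ together with the fact that ${}^{**}$ preserves meets. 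Your argument is more elementary and self-contained (it does not rely on Theorem~\ref{thm:normal-form} or the order on $\cm{\alg{F}_n(k)}$), while the paper's version illustrates how the structural machinery yields such facts mechanically once it is in place.
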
  

\begin{proof}
Let $i\in\{1,\dots,k\}$, and let $\emptyset\neq \mathcal{T}\subseteq\pw(k)$ and
$L\subseteq\bigcap\mathcal{T}$. Then $\mu^L_{\mathcal{T}}\in\HtM(x_i)$ if and
only if $i\in L$. Hence
$$
\HtM(x_i) = \{\mu^L_{\mathcal{T}}: \emptyset\neq \mathcal{T}\subseteq\pw(k),\
i\in L\subseteq\bigcap\mathcal{T}\}.
$$
Putting $\mathcal{S}
= \bigl\{T\cup\{i\}: T\subseteq \{1,\dots,k\}\setminus\{i\}\bigr\}$,
we see that $|\mathcal{S}|\leq 2^{n-1}$. Then, by
Theorem~\ref{thm:structure}(2) we obtain that
$\mu^{\{x_i\}}_{\mathcal{S}}$ is the smallest element of
$\HtM(x_i)$, which means that $p^{\{x_i\}}_{\mathcal{S}}$
is the greatest element in $\{p^L_{\mathcal{T}}: p^L_{\mathcal{T}}\leq x_i\}$.
Therefore
$$
x_i = \bigvee\{p^L_{\mathcal{T}}: p^L_{\mathcal{T}}\leq x_i\} = p^{\{x_i\}}_{\mathcal{S}} 
$$
proving the claim.
\end{proof}

Since p-algebras are expansions of bounded distributive latices, it 
comes as no surprise that there should be some connection between free
p-algebras and free 
bounded distributive lattices. First note that a free bounded distributive
lattice $\alg{D}(s)$ on finitely many generators has a single atom, so it is naturally
endowed with a p-algebra structure (all elements except $0$ are dense).
The connection we mentioned is stated below.

\begin{theorem}\label{thm:free-distrib}
Let $\alg{F}_n(k)$ be the $k$-generated free algebra in $\var{Pa}_n$, and
let $T\in\pw(k)$. Then
$$
\alg{F}_n(k)/\Theta(1,x_T^{**})\cong \alg{D}(s)
$$  
where $s = |T|$ and $\alg{D}(s)$ is viewed as a p-algebra.
\end{theorem}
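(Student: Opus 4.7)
The plan is to analyse the quotient through the structural description of completely meet-irreducible congruences built up above. By the correspondence theorem, $\cm{\alg{F}_n(k)/\Theta(1,x_T^{**})}$ is order-isomorphic to the subposet of $\cm{\alg{F}_n(k)}$ consisting of those $\mu$ satisfying $(x_T^{**},1)\in\mu$, that is, to $\HtM(x_T^{**})$. By Lemma~\ref{lem:atoms-and-cms}(3), together with the normal form of Corollary~\ref{cor:cm-and-jirr} (and the convention $\mu^T_{\{T\}}=\mu_T$), this set equals
$$
\HtM(x_T^{**}) = \{\mu_T\}\cup\{\nu\in\dois{\alg{F}_n(k)}:\nu^+=\mu_T\} = \{\mu^L_{\{T\}}:L\subseteq T\},
$$
of cardinality $2^s$, where $s=|T|$.

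Next, Theorem~\ref{thm:structure}(2) yields $\mu^L_{\{T\}}\leq^{\Cm{}}\mu^K_{\{T\}}\iff L\subseteq K$, so $(\HtM(x_T^{**}),\leq^{\Cm{}})$ is order-isomorphic to the Boolean lattice $(2^T,\subseteq)$. Since $\leq^{\Cm{}}$ is defined intrinsically by inclusion of $1$-classes (Definition~\ref{def:cm-order}) and quotienting preserves such inclusions, this is exactly the order that $\alg{F}_n(k)/\Theta(1,x_T^{**})$ induces on its own completely meet-irreducible congruences. Applying Theorem~\ref{thm:cm-summary}(4) to the finite quotient therefore gives
$$
\alg{F}_n(k)/\Theta(1,x_T^{**}) \cong \mathrm{Up}\bigl(2^T,\subseteq\bigr)
$$
as p-algebras.

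It remains to identify $\mathrm{Up}(2^T,\subseteq)$ with $\alg{D}(s)$. By Priestley duality, the dual of the free bounded distributive lattice on $s$ generators is the discrete Boolean lattice $(2^{[s]},\subseteq)$, whence $\alg{D}(s)\cong\mathrm{Up}(2^{[s]},\subseteq)$ as bounded distributive lattices. The p-algebra structure induced by the pp-space is $U^*=\dw{U}^\complement$; since every non-empty upset of a Boolean lattice contains its top element, one has $\dw{U}=2^T$ and hence $U^*=\emptyset$ for $U\neq\emptyset$, together with $\emptyset^*=2^T$. This coincides with the p-algebra structure on $\alg{D}(s)$ in which every non-zero element is dense. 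The main obstacle I expect is the careful bookkeeping required to transfer the order $\leq^{\Cm{}}$ through the quotient and align it with $(2^T,\subseteq)$; once that is settled, the identification with $\alg{D}(s)$ is essentially a Priestley-duality computation.
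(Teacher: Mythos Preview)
Your argument is correct and takes a genuinely different route from the paper's. The paper constructs an explicit surjective p-algebra homomorphism $\overline{h}\colon\alg{F}_n(k)\to\alg{D}(s)$, sending $x_i\mapsto y_i$ for $i\in T$ and $x_i\mapsto 0$ otherwise, and then verifies $\ker\overline{h}=\Theta(1,x_T^{**})$ by double inclusion; the harder inclusion uses the description $\Theta(1,x_T^{**})=\bigcap\{\mu^L_{\{T\}}:L\subsetneq T\}$ together with auxiliary maps $\pi_L\colon\alg{D}(s)\to\Bnalg{0}$ factoring the $\overline{g}^L_{\{T\}}$. By contrast, you stay entirely on the dual side: you identify $(\cm{\alg{F}_n(k)/\Theta(1,x_T^{**})},\leq^{\Cm{}})$ with $(\pw(T),\subseteq)$ via Lemma~\ref{lem:atoms-and-cms}(3) and Theorem~\ref{thm:structure}(2), apply Theorem~\ref{thm:cm-summary}(4)(a), and then recognise $\mathrm{Up}(\pw(T),\subseteq)$ as $\alg{D}(s)$ by Priestley duality. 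Your approach is more uniform (no separate treatment of $T=\emptyset$) and is very much in the spirit of the paper's proof of Corollary~\ref{cor:stone-a}; the paper's approach has the advantage of exhibiting the isomorphism explicitly. The transfer of $\leq^{\Cm{}}$ through the quotient, which you flag as the main worry, is indeed routine: under the correspondence $\mu\mapsto\mu/\theta$, the $1$-class of $\mu/\theta$ is precisely the image of $1/\mu$, so inclusions of $1$-classes are preserved and reflected.
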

  
\begin{proof}
First we consider the case $T=\emptyset$. Then $x_T^{**} = x_T$, 
so in this particular case $x_T^{**}$ is an atom, and hence by
Lemma~\ref{lem:atoms-and-cms}(1) we have
$\Theta(1,x_T^{**})=\mu_T\in\um{\alg{F}_n(k)}$. Therefore,
$\alg{F}_n(k)/\Theta(1,x_T^{**})\cong\alg{2} = \alg{D}(0)$.

Now assume $T\neq\emptyset$ and let $Y = \{y_i: i\in T\}$. Define
$h\colon \{x_1,\dots,x_k\}\to D(s)$ putting
$$
h(x_i) = \begin{cases}
  y_i & \text{ if } i\in T\\
  0 & \text{ if } i\notin T
  \end{cases}
$$
so that $\overline{h}\colon \alg{F}_n(k)\to \alg{D}(s)$ is a surjective
(since $Y$ is contained in the image of $\overline{h}$) p-algebra homomorphism.
Moreover, we have
\begin{align*}
\overline{h}(x_T^{**}) &= 
\overline{h}\bigl((\bigwedge_{i\in T} x_i\wedge
  \bigwedge_{i\notin T} x_i^*)^{**}\bigr)
=  \bigwedge_{i\in T} \overline{h}(x_i)^{**}\wedge
                         \bigwedge_{i\notin T} \overline{h}(x_i)^*\\
&= \bigwedge _{i\in T} y_i^{**} \wedge 1 = 1 
\end{align*}  
and therefore $\Theta(1,x_T^{**})\subseteq\ker \overline{h}$.

To show the converse, note first that since $T\neq \emptyset$ we have
$\HtM(x_T^{**})\cap\dois{\alg{F}_n(k)}\neq\emptyset$ and thus by
Lemma~\ref{lem:atoms-and-cms}(3) we get
$$
\Theta(1,x_T^{**}) = \bigcap\{\nu\in\dois{\alg{F}_n(k)}: \nu^+ = \mu_T\}
$$
and using the description of $\cm{\alg{F}_n(k)}$ from
Theorem~\ref{thm:structure}, we obtain
$$
\Theta(1,x_T^{**}) = \bigcap\{\mu^L_{\{T\}}:L\subsetneq T\}.
$$
Let us now take $L\subsetneq T$ and a map $\pi_L: Y\to \Bnalg{0}$ given by
$\pi_L(y_i) =\begin{cases}
  1 & \text{ if } i\in L,\\
  e & \text{ if } i\in T\setminus L.
\end{cases}$
Then, as $\overline{\pi}_L(0) = 0$, we have that  $\overline{\pi}_L$ is
a surjective bounded distributive lattice homomorphism. Moreover,
since $\overline{\pi}_L(a)\in \{1,e\}$ for any $a\in D(s)\setminus\{0\}$,
it is easy to verify that $\overline{\pi}_L$ is also a p-algebra homomorphism.
From the proof of Lemma~\ref{lem:suitable-p} we know that
$\mu^L_{\{T\}} = \ker \overline{g}^L_{\{T\}}$, where
$g^L_{\{T\}}\colon \{x_1,\dots,x_k\}\to\Bnalg{0}$ is given by
$$
g^L_{\{T\}}(x_i) \deq\begin{cases}
            1 & \text{ if } i\in L\\
     f_T(x_i) & \text{ if } i\notin L
\end{cases}
$$
with $f_T$ from~\eqref{eq:fT}. Unwrapping the definition for the present case  
gives
$$
g^L_{\{T\}}(x_i) =\begin{cases}
  1 & \text{ if } i\in L\\
  e & \text{ if } i\in T\setminus L\\
  0 & \text{ if } i\notin T
\end{cases}
$$
Now it is easy to see that $\overline{\pi}_L\circ \overline{h}|_{\{x_1,\dots,x_k\}}
= \overline{g}^L_{\{T\}}|_{\{x_1,\dots,x_k\}}$, so by the universal property of the free
algebra we obtain $\overline{\pi}_L\circ \overline{h} = \overline{g}^L_{\{T\}}$
and this implies that
$\ker\overline{h}  \subseteq \ker \overline{g}^L_{\{T\}}  = \mu^L_{\{T\}}$.
Hence $\ker\overline{h}\subseteq  \bigcap\{\mu^L_{\{T\}}:L\subsetneq T\}
= \Theta(1,x_T^{**})$ as claimed. 
\end{proof}

From Theorem~\ref{thm:free-distrib} we can easily derive the fact (well known
before, see, e.g.,~\cite{Urq73}) that a free finitely-generated Stone algebra is a
direct product of finite free bounded distributive lattices (viewed as
p-algebras).

\begin{cor}\label{cor:stone-a}
For any $k$, we have
$$
\alg{F}_1(k)\cong
\prod_{s=0}^k\alg{D}(s)^{\binom{k}{s}}
$$
where $\alg{D}(s)$ is the free bounded distributive lattice on $s$ generators,
viewed as a p-algebra.
\end{cor}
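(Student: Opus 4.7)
The plan is to use the representation $\alg{F}_1(k)\cong \mathrm{Up}(\cm{\alg{F}_1(k)},\leq^{\Cm{}})$ from Theorem~\ref{thm:structure}(3) and show that the poset $(\cm{\alg{F}_1(k)},\leq^{\Cm{}})$ decomposes as a disjoint union of blocks indexed by $T\in\pw(k)$, so that the resulting $\mathrm{Up}$ factors as a direct product whose components are precisely the quotients supplied by Theorem~\ref{thm:free-distrib}.

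First, I would observe that in $\var{Pa}_1$ the constraint $|\mathcal{T}|\leq n=1$ in Corollary~\ref{cor:cm-and-jirr} forces $\mathcal{T}$ to be a singleton, so $\cm{\alg{F}_1(k)} = \{\mu^L_{\{T\}}: T\in\pw(k),\ L\subseteq T\}$. Applying Theorem~\ref{thm:structure}(2) gives $\mu^L_{\{T_1\}}\leq^{\Cm{}}\mu^K_{\{T_2\}}$ iff $T_1=T_2$ and $L\subseteq K$, so $(\cm{\alg{F}_1(k)},\leq^{\Cm{}})$ is the disjoint union of $2^k$ Boolean blocks, the block indexed by $T$ being order-isomorphic to $(\pw(T),\subseteq)$. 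Second, for any disjoint union of posets, downsets and hence pseudo-complements are computed componentwise, so $\mathrm{Up}$ decomposes as a direct product of p-algebras. Combined with Theorem~\ref{thm:structure}(3) this yields
$$
\alg{F}_1(k)\ \cong\ \mathrm{Up}(\cm{\alg{F}_1(k)},\leq^{\Cm{}})\ \cong\ \prod_{T\in\pw(k)} \mathrm{Up}(\pw(T),\subseteq).
$$

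Third, I need to identify each factor as $\alg{D}(|T|)$. Setting $\alpha_T\deq\Theta(1,x_T^{**})$, I would use Lemma~\ref{lem:atoms-and-cms}(3) together with the observation that for $\mu^L_{\{S\}}\in\dois{\alg{F}_1(k)}$ we have $(\mu^L_{\{S\}})^+=\mu_S$ to verify $\alpha_T\subseteq\mu^L_{\{S\}}$ iff $S=T$. By the congruence correspondence theorem and Theorem~\ref{thm:cm-summary}(4)(a) this gives $\alg{F}_1(k)/\alpha_T\cong \mathrm{Up}(\pw(T),\subseteq)$, while Theorem~\ref{thm:free-distrib} independently provides $\alg{F}_1(k)/\alpha_T\cong \alg{D}(|T|)$. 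Grouping by cardinality $s=|T|$ and using the fact that there are $\binom{k}{s}$ subsets of size $s$ produces the claimed isomorphism. The main obstacle is the bookkeeping in the third step, namely pinpointing exactly which completely meet-irreducible congruences contain $\alpha_T$; once that is settled everything else is automatic from the machinery already established.
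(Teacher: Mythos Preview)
Your proposal is correct and follows essentially the same route as the paper's proof: both decompose $(\cm{\alg{F}_1(k)},\leq^{\Cm{}})$ into the disjoint blocks $\{\mu^L_{\{T\}}:L\subseteq T\}$ indexed by $T\in\pw(k)$, translate this via $\mathrm{Up}$ into a direct product, and then identify each factor with $\alg{F}_1(k)/\Theta(1,x_T^{**})\cong\alg{D}(|T|)$ using Lemma~\ref{lem:atoms-and-cms}(3), Theorem~\ref{thm:cm-summary}(4)(a), and Theorem~\ref{thm:free-distrib}. Your version is slightly more explicit in spelling out that each block is order-isomorphic to $(\pw(T),\subseteq)$ and that $\mathrm{Up}$ of a disjoint union is a product of p-algebras, whereas the paper invokes duality at that point, but the argument is the same.
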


\begin{proof}
By the construction of free algebras for $\var{Pa}_1$ (see
Theorem~\ref{thm:structure}) we know that
$$
(\cm{\alg{F}_1(k)};\leq^{\Cm{}}) =
\biguplus_{T\in\pw(k)}\bigl(\{\mu^L_{\{T\}}: L\subseteq T\};\leq^{\Cm{}}\bigr).
$$ 
Hence, putting $\theta_T \deq \Theta(x_T^{**},1)$ for any $T\in\pw(k)$,
by Lemma~\ref{lem:atoms-and-cms}(3) we obtain, 
$$
(\cm{\alg{F}_1(k)/\theta_T};\leq^{\Cm{}}) \cong
\bigl(\{\mu^L_{\{T\}}: L\subseteq T\};\leq^{\Cm{}}\bigr).
$$
By Theorem~\ref{thm:cm-summary}(4)(a) we thus have
$$
\mathrm{Up}\bigl(\{\mu^L_{\{T\}}: L\subseteq T\};\leq^{\Cm{}}\bigr)
\cong \alg{F}_1(k)/\theta_T
$$
and therefore by duality we obtain
$$
\alg{F}_1(k)\cong \mathrm{Up}(\cm{\alg{F}_1(k)};\leq^{\Cm{}})
\cong \prod_{T\in\pw(k)}\alg{F}_1(k)/\theta_T
$$
from which by Theorem~\ref{thm:free-distrib} we immediately get
$$
\alg{F}_1(k)\cong
\prod_{s=0}^k\alg{D}(s)^{\binom{k}{s}}.
$$
\end{proof}

We can also quite easily obtain a formula for counting the number of elements in
$\J{\alg{F}_n(k)}$. The formula was known already to Berman
and Dwinger, see~\cite{BD75}, but in our version  
it amounts to the following.
Fix $L\subseteq\{1,\dots,k\}$, let $\ell = |L|$. Then we choose supersets
of $L$ so that their number does not exceed $n$. Let the number of these
supersets be $m$. The number of choices of such supersets is
$\binom{2^{k-\ell}}{m}$. Then by simple combinatorics we get the
formula below.

\begin{prop}\label{prop:counting}
For any $k\geq 0$ we have
\begin{equation*}\label{eq:J}\tag{J}
|\J{\alg{F}_n(k)}|
= \sum_{\ell=0}^k \binom{k}{\ell}\sum_{m=1}^n\binom{2^{k-\ell}}{m}. 
\end{equation*}
\end{prop}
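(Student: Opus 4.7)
The plan is to read the count directly off Corollary~\ref{cor:cm-and-jirr}(2), which parametrises $\J{\alg{F}_n(k)}$ as the set of pairs $(L,\mathcal{T})$ with $\mathcal{T}\subseteq\pw(k)$, $0<|\mathcal{T}|\leq n$ and $L\subseteq\bigcap\mathcal{T}$. First I would verify that this parametrisation is injective: since $\Psi$ from Corollary~\ref{cor:cm-into-jirr} is a bijection between $\Cm{\alg{F}_n(k)}$ and $\J{\alg{F}_n(k)}$, distinctness of the $p^L_\mathcal{T}$ is equivalent to distinctness of the $\mu^L_\mathcal{T}$, which in turn follows by specialising Theorem~\ref{thm:structure}(2) to the case of equality in both directions of $\leq^{\Cm{}}$.

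Once distinctness is in hand, the proof becomes a bookkeeping exercise. I would stratify the sum first by $\ell\deq |L|$ and then by $m\deq|\mathcal{T}|$. For each fixed $\ell\in\{0,\dots,k\}$ there are $\binom{k}{\ell}$ choices for $L$, and for each such $L$ the family $\{T\in\pw(k):L\subseteq T\}$ is a Boolean lattice of size $2^{k-\ell}$. The admissible $\mathcal{T}$ are precisely the nonempty subfamilies of this lattice of size at most $n$, so for each $m\in\{1,\dots,n\}$ the number of choices is $\binom{2^{k-\ell}}{m}$. Summing these contributions gives exactly the claimed formula
\[
|\J{\alg{F}_n(k)}| = \sum_{\ell=0}^k \binom{k}{\ell}\sum_{m=1}^n\binom{2^{k-\ell}}{m}.
\]

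There is no real obstacle: the only subtle point is remembering that the parameters $(L,\mathcal{T})$ with $\mathcal{T}=\{T\}$ and $L=T$ are included (they correspond to the atoms $x_T=p^T_{\{T\}}$), so the condition from Lemma~\ref{lem:suitable-p} restricting $L\neq\bigcap\mathcal{T}$ when $|\mathcal{T}|=1$ is not imposed here — it only distinguishes $\um{\alg{F}_n(k)}$ from $\dois{\alg{F}_n(k)}$, whereas both contribute to $\J{\alg{F}_n(k)}$. Once this is noted, the computation is purely combinatorial.
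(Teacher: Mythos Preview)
Your proposal is correct and follows essentially the same approach as the paper: fix $L$ with $|L|=\ell$, observe that the admissible $\mathcal{T}$ are exactly the nonempty subfamilies of size at most $n$ of the $2^{k-\ell}$ supersets of $L$, and sum. You are in fact more careful than the paper, which takes injectivity of $(L,\mathcal{T})\mapsto p^L_\mathcal{T}$ for granted, whereas you extract it explicitly from Theorem~\ref{thm:structure}(2); your remark about the case $|\mathcal{T}|=1$, $L=T$ is also a useful clarification.
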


\begin{example}\label{exa:count}
We invite the reader to verify that for $n\geq 2^k$ the formula~\eqref{eq:J}
simplifies to   
$$
|\J{\alg{F}_n(k)}| = \sum_{\ell=0}^k \binom{k}{\ell}(2^{2^{\ell}} -1). 
$$
and that in full generality, we have
$$
|\J{\alg{F}_1(k)}| = 3^k \qquad\text{ and }\qquad |\J{\alg{F}_2(k)}| =
\frac{1}{2}(5^k+3^k). 
$$
\end{example}  

\subsection{Structural completeness}
A deductive system $\mathcal{S}$ is \emph{structurally complete}
if all its admissible rules are derivable. 
The notion of structural completeness was introduced by Pogorzelski
in~\cite{Pog71}, and later investigated by many authors in various forms and
modifications. We refer the reader to~\cite{PW08} for a reasonably up-to-date
monograph of structural completeness. Deductive systems correspond,
roughly, to quasivarieties, so an analogous notion naturally arises for
them. Algebraic investigations of structural completeness have enjoyed a revival
in the 21st Century, see for instance~\cite{ORvA}, \cite{CM09}, \cite{RS16},
\cite{BM23} --- or~\cite{DS16}, \cite{Wro09} for certain weakenings of the concept.

Structural completeness of a 
quasivariety $\mathcal{Q}$ was characterised in~\cite{Ber91}, namely,  
$\mathcal{Q}$ is structurally complete if and only if $\mathcal{Q}$ is generated
by its free algebra on countably many generators, or equivalently,
by its finitely generated free algebras. If $\mathcal{V}$ is a variety, then
$\mathcal{V}$ is structurally complete if and only if $\mathcal{V}$ is generated by its
free algebras as a quasivariety. 

Thus, to prove failure of structural completeness for a variety $\mathcal{V}$
it suffices to exhibit a quasiequation that holds in free $\mathcal{V}$-algebras
but fails in $\mathcal{V}$. Conversely, to establish structural completeness of
$\mathcal{V}$ it suffices to show that all subquasivarieties of
$\mathcal{V}$ are varieties. For if there were a quasiequation $\mathbf{q}$
such that $\alg{F}_{\mathcal{V}}(X)\models\mathbf{q}$ but
$\mathcal{V}\not\models\mathbf{q}$, then
$\mathrm{Mod}(\mathbf{q})\cap\mathcal{V}$ would be a proper subquasivariety of
$\mathcal{V}$ which is not a variety, as
$\alg{F}_{\mathcal{V}}(X)\in\mathrm{Mod}(\mathbf{q})\cap\mathcal{V}$ but
$H(\alg{F}_{\mathcal{V}}(X))\subsetneq\mathrm{Mod}(\mathbf{q})\cap\mathcal{V}$.
In fact the same reasoning applies to all subvarieties of $\mathcal{V}$, so
if all subquasivarieties of $\mathcal{V}$ are varieties, then all of
them are structurally complete, or as we say $\mathcal{V}$ is
\emph{hereditarily structurally complete}. 

The following quasi-identities will be used in a sequel article to obtain a few
new results about the structure of the lattice of subquasivarieties of
$\var{Pa}$. Here we will use only one of them to prove structural
incompleteness for most of varieties of p-algebras.  
\begin{equation*}
\bigFOand_{1\leq i\leq n} (x^*_i = \bigvee_{j\neq i} x_j)
\tag{$\mathbf{qb}_n$}\ \implies \bigvee_{1\leq i\leq n} x_i = 1.
\end{equation*}
Our description of free algebras quickly yields the next lemma.

\begin{lemma}\label{lem:b3-true-in-Fm}
Let $\mathbf{F}_m(X)$ be a free algebra in
$\var{Pa}_m$, for $m\geq 3$ and $X\neq\emptyset$.
Then $\mathbf{F}_m(X)\models \mathbf{qb}_3$.
\end{lemma}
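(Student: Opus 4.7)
The strategy is to pass to the isomorphic representation $\alg{F}_m(X) \cong \mathrm{Up}(\cm{\alg{F}_m(X)}, \leq^{\Cm{}})$ provided by Theorem~\ref{thm:structure}(3), translate the antecedent into a statement about upsets, and then extract a purely combinatorial condition on subsets of $\pw(k)$ that forces the conclusion.

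First I would show that the antecedent makes the $a_i$'s pairwise disjoint and regular. Disjointness follows from $0 = a_i \wedge a_i^* = a_i \wedge (a_j \vee a_k)$, while regularity follows by computing $a_i^{**} = (a_j \vee a_k)^* = a_j^* \wedge a_k^* = (a_i \vee a_k) \wedge (a_i \vee a_j) = a_i \vee (a_j \wedge a_k) = a_i$, using distributivity and the disjointness just established.

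Setting $A_i \deq \HtM(a_i)$, the antecedent becomes $A_i^* = A_j \cup A_k$ by Lemma~\ref{lem:homomorphism}, with $A_i^* = \dw{A_i}^\complement$. Because $a_i$ is regular, Lemma~\ref{lem:tech-for-qid} implies that $A_i$ is determined by $\mathcal{U}_i \deq \{T\in\pw(k) : \mu_T\in A_i\}$, namely $\mu^L_{\mathcal{T}} \in A_i$ iff $\mathcal{T}\subseteq \mathcal{U}_i$. Restricting the identity $A_i^* = A_j \cup A_k$ to the maximal cm congruences (where $\dw{\phantom{X}}$ is trivial) and using the disjointness yields that $\mathcal{U}_1,\mathcal{U}_2,\mathcal{U}_3$ partition $\pw(k)$. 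Restricting to $\dois{\alg{F}_m(X)}$, and using $\mu^L_{\mathcal{T}}\in\dw{A_i}$ iff $\mathcal{T}\cap\mathcal{U}_i\neq\emptyset$, the same identity yields the combinatorial condition: for every $\mathcal{T}\subseteq\pw(k)$ with $1\leq |\mathcal{T}|\leq m$ contained in $\mathcal{U}_j\cup\mathcal{U}_k$, one has $\mathcal{T}\subseteq \mathcal{U}_j$ or $\mathcal{T}\subseteq \mathcal{U}_k$.

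Since $m\geq 3\geq 2$, taking $\mathcal{T} = \{T_j,T_k\}$ with $T_j\in\mathcal{U}_j$ and $T_k\in\mathcal{U}_k$ would violate this whenever both $\mathcal{U}_j,\mathcal{U}_k$ are nonempty. Hence for every pair $\{j,k\}$ at least one of $\mathcal{U}_j,\mathcal{U}_k$ is empty, forcing exactly one $\mathcal{U}_i$ to equal $\pw(k)$ and the other two to be empty. Translating back this says, up to permutation, $a_1=1$ and $a_2=a_3=0$, so in particular $a_1\vee a_2\vee a_3 = 1$. The main obstacle is the initial reduction, via Lemma~\ref{lem:tech-for-qid}, of each regular upset $A_i$ to its ``top-level shadow'' $\mathcal{U}_i$; once this is in place the rest is elementary combinatorics on $\pw(k)$.
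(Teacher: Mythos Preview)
Your proof is correct but proceeds quite differently from the paper's. The paper argues by contradiction: assuming $t_1\vee t_2\vee t_3<1$, it picks a $\mu\in\dois{\alg{F}_m(X)}$ with $(t_1\vee t_2\vee t_3)/\mu=e_\mu/\mu$, shows that each $\varphi\in\um{\alg{F}_m(X)}$ above $\mu$ contains exactly one pair $(1,t_i)$, and then---using the structure theorem to produce a $\gamma\in\dois{\alg{F}_m(X)}$ with $\gamma^+=\varphi_1\cap\varphi_2$ for two such $\varphi$'s---derives a contradiction from primeness of $1/\gamma$. Your route is more global: the key extra observation is that the antecedent forces each $a_i$ to be \emph{regular}, which via Lemma~\ref{lem:tech-for-qid} collapses the whole problem to the ``top storey'' $\um{\alg{F}_m(X)}\cong\pw(k)$ and turns it into elementary combinatorics of ordered partitions. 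Both arguments hinge on the same structural input (existence of $\mu^L_{\mathcal{T}}$ for two-element $\mathcal{T}$, i.e., Lemma~\ref{lem:suitable-p}), but yours isolates this more cleanly and in fact shows that $m\geq 2$ already suffices; the paper's argument is shorter but more ad hoc. One small point: you implicitly reduce to finite $X$ via local finiteness---worth stating explicitly, though the paper's own proof does the same.
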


\begin{proof}
To get a contradiction, suppose $\mathbf{F}_m(X)\not\models \mathbf{qb}_3$.
Then there exist $t_1,t_2,t_3\in \mathbf{F}_m(X)$ such that
$\forall i\in\{1,2,3\}: t_i^* = \bigvee_{j\neq i} t_j$, and
$t_1\vee t_2\vee t_3\neq 1$. Note that then for all $i,j\in\{1,2,3\}$ we have
$$
i\neq j \implies t_i\wedge t_j = 0
$$
as $0 = t_i\wedge t_i^* = t_i\wedge (\bigvee_{j\neq i} t_j) =
\bigvee_{j\neq i}(t_i\wedge t_j)$. Moreover, $t_1\vee t_2 \vee t_3 =
t_1\vee t_1^*$ is a dense element of $\alg{F}_m(X)$, that is,
$(t_1\vee t_2 \vee t_3)^{**} = 1$. Since $t_1\vee t_2 \vee t_3 < 1$, there
exists $\mu\in\dois{\alg{F}_m(X)}$ such that
$(t_1\vee t_2 \vee t_3)/\mu = e_\mu/\mu$. We will show that
\begin{equation*}\label{eq:ast}\tag{$\ast$}
\forall \varphi\in\um{\alg{F}_m(X)}: \varphi > \mu
\implies \exists!\; i: (1,t_i)\in \varphi.
\end{equation*}
To see it, take $\varphi \in\um{\alg{F}_m(X)}$ with $\varphi>\mu$, and
note that $(t_1^*\wedge t_2^* \wedge t_3^*,0)\in\mu$. Thus,
$(t_1^*\wedge t_2^* \wedge t_3^*,0)\in\varphi$; therefore, as
$\varphi>\mu$, we must have $(1,t_i)\in\varphi$
for some $i\in\{1,2,3\}$. Since
for all $i\neq j$ we have $t_i\wedge t_j = 0$, we obtain
$(0,t_j)$ for all $j\neq i$, showing~\eqref{eq:ast}.

On the other hand, for each $i\in\{1,2,3\}$
there exists $\varphi_i\in\um{\alg{F}_m(X)}$ such that
$\varphi_i>\mu$ and $(1,t_i)\in\varphi_i$. To show it, assume otherwise, that
is, that we have $(1,t_i)\notin\varphi_i$ for all
$\varphi\in\um{\alg{F}_m(X)}$ such that $\varphi>\mu$. This implies
$(0,t_i)\in\mu^+$, whence $(0,t_i)\in\mu$ as by Lemma~\ref{lem:mu-plus}(3) we
have $0/\mu^+ = 0/\mu$. But this further implies $t_i^*/\mu =
1/\mu$, which is a contradiction.

Now take $\varphi_1,\varphi_2\in\um{\alg{F}_m(X)}$ such that
$(1,t_1)\in\varphi_1$, $(1,t_2)\in\varphi_2$ and
$\varphi_1\wedge\varphi_2>\mu$. Then, by~\eqref{eq:ast}, we
obtain $(1,t_3)\notin\varphi_1$ and $(1,t_3)\notin\varphi_2$. Hence
$(1,t_3^*)\in \varphi_1\wedge\varphi_2$.
By the construction of $\alg{F}_n(k)$ we obtain that
for some $\gamma\in\dois{\alg{F}_n(k)}$ we have $\gamma^+
= \varphi_1\cap\varphi_2$. By Lemma~\ref{lem:mu-plus}(2) this implies
$t_3^*\in 1/\gamma^+ = 1/\gamma\cup e_\gamma/\gamma$. Obviously,
$t_3^*/\gamma \neq e_\gamma/\gamma$; therefore
$t_3^*/\gamma = 1/\gamma = (t_1\vee t_2)/\gamma$.
It follows that $(1,t_1)\in\gamma\leq \varphi_2$ or
$(1,t_2)\in\gamma\leq \varphi_1$, which yields a contradiction in either case.
\end{proof}

\begin{theorem}\label{thm:str-incompl}
For $m < 3$ the variety $\var{Pa}_m$ is hereditarily structurally complete.
For $m\geq 3$ the variety $\var{Pa}_m$ is structurally incomplete.
\end{theorem}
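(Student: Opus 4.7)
The theorem splits into two parts. For $m \geq 3$, the plan is to use Lemma~\ref{lem:b3-true-in-Fm} and produce an algebra in $\var{Pa}_m$ that refutes $\mathbf{qb}_3$. By that lemma, $\mathbf{qb}_3$ holds in every finitely generated free algebra $\alg{F}_m(X)$, and hence in the quasivariety they generate. Since $\Bnalg{3}\in \var{Pa}_3\subseteq\var{Pa}_m$, it suffices to evaluate $\mathbf{qb}_3$ in $\Bnalg{3}$ on the three atoms $a_1,a_2,a_3$: each $a_i^* = \neg a_i = \bigvee_{j\neq i} a_j$ in the Boolean part, so the antecedent holds, while $a_1\vee a_2\vee a_3 = e < 1$, so the conclusion fails. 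Hence $\var{Pa}_m$ is structurally incomplete.

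For $m<3$, the plan is to prove that every subquasivariety of $\var{Pa}_m$ is a variety, yielding hereditary structural completeness by the reasoning sketched just before the theorem statement. The key step is to establish two embedding facts. First, every $\alg{A}\in\var{Pa}_1\setminus\var{Pa}_0$ contains $\Bnalg{1}$ as a subalgebra: pick $a\in A$ with $a\vee a^*\neq 1$, set $b\deq a\vee a^*$, and note $b^* = a^*\wedge a^{**} = 0$; then $\{0,b,1\}$ is a subalgebra isomorphic to $\Bnalg{1}$. Second, every $\alg{A}\in\var{Pa}_2\setminus\var{Pa}_1$ contains $\Bnalg{2}$ as a subalgebra: pick $a\in A$ with $a^*\vee a^{**}\neq 1$, and verify that $\{0,\,a^*,\,a^{**},\,a^*\vee a^{**},\,1\}$ is closed under all p-algebra operations (using $a^{***}=a^*$ and the usual De~Morgan identities for $*$), with $a^*$ and $a^{**}$ incomparable — otherwise one of them would be $0$, forcing $a^*\vee a^{**}=1$. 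Combining these with the obvious fact that $\alg{2}$ embeds into every nontrivial p-algebra, and with Birkhoff's subdirect representation, we obtain $Q(\Bnalg{n}) = V(\Bnalg{n}) = \var{Pa}_n$ for $n\leq 2$, since each $\Bnalg{n}$ contains $\alg{2}, \Bnalg{1},\ldots,\Bnalg{n-1}$ as subalgebras.

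A short case analysis then finishes the second part. If $\mathcal{Q}\subseteq\var{Pa}_m$ is a nontrivial subquasivariety containing some $\alg{A}\notin\var{Pa}_1$, then (ii) yields $\Bnalg{2}\in\mathcal{Q}$, so $\mathcal{Q}\supseteq Q(\Bnalg{2})=\var{Pa}_2$ and $\mathcal{Q}=\var{Pa}_2\cap\var{Pa}_m$. If instead $\mathcal{Q}\subseteq\var{Pa}_1$, applying (i) similarly gives $\mathcal{Q}\in\{\var{Pa}_0,\var{Pa}_1\}$, and the trivial case is immediate. In every sub-case $\mathcal{Q}$ is one of the known subvarieties of $\var{Pa}_m$.

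The main obstacle will be verifying fact~(ii): one needs to keep track of exactly which of the putative five elements of the candidate subalgebra might coincide, and confirm that incomparability of $a^*$ and $a^{**}$ together with $b^* = 0$ really yields the full lattice shape of $\Bnalg{2}$ and not a proper quotient such as $\Bnalg{1}$ or $\alg{2}$.
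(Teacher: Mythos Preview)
Your proposal is correct and follows essentially the same approach as the paper: for $m\geq 3$ you invoke Lemma~\ref{lem:b3-true-in-Fm} and falsify $\mathbf{qb}_3$ in $\Bnalg{3}$ on its three atoms, exactly as the paper does; for $m<3$ you prove the same two embedding facts (that a non-Boolean p-algebra contains a copy of $\Bnalg{1}$ via $\{0,a\vee a^*,1\}$, and that a non-Stone p-algebra contains a copy of $\Bnalg{2}$ via $\{0,a^*,a^{**},a^*\vee a^{**},1\}$) and run the same case analysis to conclude that the only subquasivarieties of $\var{Pa}_2$ are $\var{Pa}_{-1},\var{Pa}_0,\var{Pa}_1,\var{Pa}_2$. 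Your extra remark that $Q(\Bnalg{n})=V(\Bnalg{n})$ for $n\leq 2$ is a mild repackaging of the same content, and the ``main obstacle'' you flag in the last paragraph is handled in the paper by exactly the incomparability argument you sketch (note the stray reference to $b^*=0$ there belongs to fact~(i), not~(ii)).
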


\begin{proof}
Let $\mathcal{Q}$ be nontrivial a quasivariety of p-algebras. We claim that
$\mathcal{Q}\subseteq \var{Pa}_2$, then
$\mathcal{Q}\in\{\var{Pa}_{0}, \var{Pa}_{1}, \var{Pa}_{2}\}$.
Since $\mathcal{Q}$ is nontrivial, we have
$\Bnalg{0}\in\mathcal{Q}$ and so $\var{Pa}_0\subseteq\mathcal{Q}$.
Suppose there is a $\alg{C}\in \mathcal{Q}\setminus\var{Pa}_0$. Then
$\alg{C}$ is not a Boolean algebra, and so there is a $c\in C$ such that
$c\vee c^*\neq 1$. Since $c\vee c^* > 0$, it is easy to see that
$\{0,c\vee c^*,1\}$ is a subuniverse of $\alg{C}$, and the p-algebra on
$\{0,c\vee c^*,1\}$ is isomorphic to $\Bnalg{1}$. Hence $\var{Pa}_1\subseteq
\mathcal{Q}$. Now assume $\var{Pa}_1\subsetneq \mathcal{Q}$. Since
$\var{Pa}_1$ is axiomatised by the identity $x^*\vee x^{**} = 1$, 
there is an algebra $\alg{D}\in \mathcal{Q}$ and an element $d\in D$
such that  $d^{*}\vee d^{**}\neq 1$. Clearly, $d^*, d^{**}\notin\{0,1\}$.
It is again easy to verify that $\{0,d^*, d^{**}, d^*\vee d^{**}, 1\}$
is a subalgebra of $\alg{D}$ isomorphic to $\Bnalg{2}$. Hence
$\mathcal{Q} = \var{Pa}_2$. This proves the first statement.

To see that $\mathbf{qb}_n$ fails on $\Bnalg{3}$, evaluate $x_1$,
$x_2$, $x_3$ to the three different atoms. Then,
since $\Bnalg{3} \in\var{Pa}_m$ for all $m \geq 3$, the second statement holds
as well.
\end{proof}

\bibliographystyle{plain}
\bibliography{p-algebras}

\end{document}